\tikzstyle{small node}=[circle, fill, draw, inner sep=0pt, minimum width=6pt]
\newtheorem{theorem}{Theorem}
\newtheorem*{theorem*}{Theorem}
\newtheorem{lemma}[theorem]{Lemma}
\newtheorem{proposition}[theorem]{Proposition}
\newtheorem{corollary}[theorem]{Corollary}
\newtheorem{definition}[theorem]{Definition}
\title{Extremal digraphs for open neighbourhood location-domination and identifying codes\footnote{The first author's research was financed by the French government IDEX-ISITE initiative 16-IDEX-0001 (CAP 20-25) and by the ANR project GRALMECO (ANR-21-CE48-0004). The research of the second author was in part supported by a grant from IPM (No. 1402050116).}}
\author{Florent Foucaud\footnote{\noindent Universit\'e Clermont Auvergne, CNRS, Clermont Auvergne INP, Mines Saint-\'Etienne, LIMOS, 63000 Clermont-Ferrand, France.} \and Narges Ghareghani\footnote{Department of Industrial Design, College of  Fine Arts, University of Tehran, Tehran, Iran.}
	\footnote{\noindent School of Mathematics, Institute for Research in Fundamental Sciences (IPM), Tehran, Iran.}\and Pouyeh Sharifani}
\begin{document}
	\maketitle
	
	\begin{abstract}
		A set $S$ of vertices of a digraph $D$ is called an open neighbourhood locating-dominating set if every vertex in $D$ has an in-neighbour in $S$, and for every pair $u,v$ of vertices of $D$, there is a vertex in $S$ that is an in-neighbour of exactly one of $u$ and $v$. The smallest size of an open neighbourhood locating-dominating set of a digraph $D$ is denoted by $\gamma_{OL}(D)$. We study the class of digraphs $D$ whose only open neighbourhood locating-dominating set consists of the whole set of vertices, in other words, $\gamma_{OL}(D)$ is equal to the order of $D$. We call those digraphs \emph{extremal}. By considering digraphs with loops allowed, our definition also applies to the related (and more widely studied) concept of identifying codes. We extend previous studies from the literature for both open neighbourhood locating-dominating sets and identifying codes of both undirected and directed graphs. These results all correspond to studying open neighbourhood locating-dominating sets on special classes of digraphs. To do so, we prove general structural properties of extremal digraphs, and we describe how they can all be constructed. We then use these properties to give new proofs of several known results from the literature. We also give a recursive and constructive characterization of the extremal di-trees (digraphs whose underlying undirected graph is a tree).
	\end{abstract}
	
	\section{Introduction}
	
	We consider extremal questions regarding the open neighbourhood location-domination problem on directed graphs (digraphs for short). This problem is part of the area of identification problems in discrete structures (such as graphs, digraphs or hypergraphs). In this type of problems, one wishes to uniquely determine some elements of the structure (usually the vertices or the edges) by means of a solution set (of vertices, edges or substructures), in the sense that each element to be distinguished is covered by a unique subset of the solution. Problems of this kind have been studied under various names and in different contexts such as \emph{separating systems}, \emph{discriminating codes}, or \emph{test collections}, see for example~\cite{BS07,B72,CCCHL08,HY14,MS85,R61}. They have many applications to various domains such as biological testing~\cite{MS85}, threat detection in facilities~\cite{UTS04} or fault diagnosis in computer networks~\cite{KCL98,R93}.
	
	\paragraph{Definitions.} In this paper, we consider directed graphs (digraphs for short) which can contain loops (a \emph{loop} is an arc from a vertex to itself). The vertex set and arc set of a digraph $D$ is denoted by $V(D)$ and $A(D)$, respectively. An arc from vertex $x$ to vertex $y$ is denoted $xy$, its \emph{tail} is $x$ and its \emph{head} is $y$. Multiple arcs between the same pair of vertices are allowed, but two arcs with the same tail and head are meaningless. Hence, we assume there are no multiple arcs. A digraph with no loops and with at most one arc between any pair of vertices is called an \emph{oriented graph}. A digraph is called \emph{reflexive} if each vertex has a loop. The in-neighbourhood of a vertex $x$ of $D$ is denoted by $N_D^-(v)$, and similarly $N_D^+(v)$ is the out-neighbourhood of $v$ (we may drop the $D$ subscripts if $D$ is clear from the context). A \emph{source} is a vertex with no in-neighbour, and a \emph{sink} is a vertex with no out-neighbour. By the \emph{underlying graph} of a digraph $D$, we mean the undirected simple graph (without loops and repeated edges) on vertex set $V(D)$ obtained from $D$ by adding an edge between $x$ and $y$ if $x\neq y$ and there exists an arc in $D$ between $x$ and $y$. A \emph{di-tree} is a digraph whose underlying graph is a tree. A \emph{rooted directed tree} is a directed graph without loops and directed 2-cycles whose underlying graph is a tree, which contains a single source called \emph{root}, and where each arc is oriented away from the root.
	
	We say that a digraph is \emph{connected} if its underlying graph is connected (this corresponds to the notion of weak connectivity of digraphs). If a digraph is not connected, we refer to its \emph{connected components} as the digraphs formed by the connected components of its underlying graph. A \emph{directed cycle} is a sequence of arcs such that the head of each arc is the same as the tail of the next one, the head of the last arc is the same as the tail of the first arc, and every vertex occurs only in two arcs of the sequence.	
	
	\paragraph{OLD sets.} The concept of open neighbourhood locating-dominating sets (OLD sets for short) was defined for undirected graphs under the name of \emph{IDNT codes} by Honkala, Laihonen and Ranto in~\cite[Section 5]{IDNT} and independently rediscovered by Seo and Slater in~\cite{SS10,SS11}, who coined the term ``OLD set''. We extend the definition to digraphs, in the same way as the definition of dominating sets of undirected graphs is classically extended to digraphs~\cite{Fu}. Given a digraph $D$, a set $S$ of vertices is an \emph{open neighbourhood locating-dominating set} of $D$ if (i) every vertex has an in-neighbour in $S$ (open neighbourhood domination condition) and (ii) for every pair of vertices, there is a vertex of $S$ that is an in-neighbour of exactly one of the two vertices (open neighbourhood location condition). The \emph{open neighbourhood location-domination number} (\emph{OLD number} for short) of $D$, denoted $\gamma_{OL}(D)$, is the smallest size of an OLD set of $D$. Note that a digraph with a vertex of in-degree~0 or with two vertices with the same in-neighbourhood (called \emph{in-twins}), does not admit any OLD set, but if the graph does not contain any such vertices, the whole vertex set is an OLD set. A digraph is called \emph{locatable} if it admits an OLD set.
	
	Since their introduction over a decade ago, OLD sets have been extensively studied, see~\cite{Chellali,interval-bounds,interval-algo,circulant,HY14,triangular,PP,SS11} for some papers on the topic. The concept of OLD sets is related to the one of \emph{locating-dominating sets}, defined by Slater in the 1980s~\cite{S87,S88}, where the open neighbourhood domination condition is replaced by closed neighbourhood domination, and the location condition is only required for pairs of vertices that are not in the solution set. In the related notion of \emph{identifying codes}, one replaces open (in-)neighbourhoods in both conditions by closed (in-)neighbourhoods. More precisely, a set $S$ of vertices is an identifying code of a digraph $D$ if (i) every vertex of $D$ has a vertex of $S$ in its closed in-neighbourhood and (ii) for every pair of vertices, there is a vertex of $S$ that belongs to the closed in-neighbourhood of exactly one of the two vertices. These notions were mainly studied for undirected graphs, but locating-dominating sets of digraphs were studied in~\cite{BBLP21,BDLP21,CHL02,FHP20,S07} and identifying codes of digraphs were studied
	in~\cite{BWLT06,CGHLMM06,CHL02,CH18,FNP13,S07}.
	
	In this paper, our goal is to study those locatable digraphs whose only OLD set is the whole set of vertices, which we call \emph{extremal digraphs}.

	\paragraph{Previous results.} All undirected graphs whose only OLD set is the whole set of vertices were characterized in~\cite{FGRS21}, as the family of \emph{half-graphs} defined in~\cite{EH84} (a half-graph is a special bipartite graph with both parts of the same size, where each part can be ordered so that the open neighbourhoods of consecutive vertices differ by exactly one vertex). Digraphs with no directed 2-cycles whose only identifying code is the whole set of vertices were characterized in~\cite{FNP13}, as transitive closures of top-down oriented forests. The aim of this paper is to study the set of digraphs whose only OLD set is the whole set of vertices. In fact, the above results can be reformulated in our setting.
	
	When a vertex has a loop, then its open in-neighbourhood is the same as its closed in-neighbourhood. Thus, for a reflexive digraph, the concept of an OLD set is the same as the one of an identifying code, as defined above. A digraph is \emph{symmetric} if for each arc $xy$, the arc $yx$ also exists. A symmetric digraph can be seen as an undirected graph. Thus, considering OLD sets of digraphs where loops are allowed, generalizes previous works on identifying codes of both digraphs and undirected graphs, and on OLD sets of undirected graphs.
	
	Using the digraph terminology, we can reformulate existing results from the literature in our setting as follows (the two first theorems were proved in the context of identifying codes). 
	
	\begin{theorem}[\cite{GM07}]\label{thm:IDcodes-n-1}
		For a connected, symmetric and reflexive locatable digraph $D$ of order $n$, $\gamma_{OL}(D)=n$ if and only if $n=1$.
	\end{theorem}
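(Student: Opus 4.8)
The plan is to pass to the undirected setting and then show that a connected, twin-free graph on at least two vertices always has a vertex whose deletion from the code still yields an identifying code.

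First I would translate the statement. Since $D$ is symmetric and reflexive, it is the closed-neighbourhood digraph of an undirected graph $G$ on $V(D)$: each vertex carries a loop, so its open in-neighbourhood equals its closed neighbourhood $N[v]=\{v\}\cup N(v)$, and an OLD set of $D$ is exactly an identifying code of $G$ (a set $S$ with $N[u]\cap S\neq\emptyset$ for all $u$, and $N[u]\cap S\neq N[v]\cap S$ for all $u\neq v$). Locatability says no two vertices share a closed neighbourhood, i.e.\ $G$ is (closed-)twin-free. The backward direction is then immediate: for $n=1$ the single vertex with its loop forms the unique OLD set, so $\gamma_{OL}(D)=1=n$.

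For the forward direction I would prove the contrapositive: if $G$ is connected, twin-free and $n\ge 2$, then $\gamma_{OL}(G)\le n-1$, by finding a vertex $x$ for which $S=V\setminus\{x\}$ is still an identifying code. Domination of $S$ is automatic: every $u\neq x$ is dominated by its own loop, and $x$ has a neighbour (by connectivity and $n\ge 2$) lying in $S$. Hence $S$ can fail only through separation, and since $V$ itself separates all pairs, $S$ fails precisely when some pair $\{u,v\}$ has $N[u]\triangle N[v]=\{x\}$. A short computation shows such a pair must be adjacent with $N[u]=N[v]\cup\{x\}$, $x\sim u$ and $x\not\sim v$ — in other words $u,v$ are near-twins differing only in the single vertex $x$, and $N[v]\lessdot N[u]$ is a covering pair (degrees differing by exactly one) in the closed-neighbourhood inclusion order. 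Call $x$ \emph{forced} if such a near-twin edge exists; the goal becomes to show that some vertex is not forced.

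I would establish this by contradiction, assuming every vertex is forced, and exploiting the closed-neighbourhood inclusion poset $P$ (a genuine partial order by twin-freeness). Orienting each near-twin edge from the smaller to the larger closed neighbourhood, every forcing edge points strictly upward in $P$ and is labelled by its unique extra vertex, which is a neighbour of its head but a non-neighbour of its tail; in particular any universal vertex is automatically unforced, which already handles graphs with a dominating vertex. The remaining core is to rule out the configuration in which all $n$ labels occur, and here I expect the cleanest route to be a descent: pick a near-twin edge $v\lessdot u$ whose head $u$ is maximal in $P$, delete the lower near-twin $v$ (connectivity is preserved, since every neighbour of $v$ lies in $N[u]$ and $u\neq v$ survives), and either read off a non-forced vertex directly or apply induction on $n$ to the smaller graph, transporting a removable vertex back to $G$. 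The main obstacle is exactly this last step, and it is genuinely global rather than local: no fixed extremal choice of $x$ works. For instance, in $P_3$ the only removable vertex is the central (universal) one while both leaves are forced, whereas in $P_4$ and $P_5$ the removable vertices are the low-degree endpoints and the central maximal-degree vertices are forced. Thus the argument cannot merely delete a vertex of extreme degree or an extreme element of $P$; the difficulty is to show that the rigid, self-referential demand that \emph{every} vertex arise as a singleton symmetric difference $N[u]\triangle N[v]$ is incompatible with connectivity and twin-freeness, and the delicate bookkeeping will be to control the possible creation of new twins when passing to the smaller graph in the descent.
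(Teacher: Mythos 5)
Your reduction is sound and in fact mirrors the paper's setup: since every vertex of a reflexive digraph dominates itself through its loop, domination by $V\setminus\{x\}$ is automatic (for $x$ itself one uses connectivity, as you note), so the only way $V\setminus\{x\}$ can fail to be an OLD set is that some pair $u,v$ satisfies $N[u]\ominus N[v]=\{x\}$, i.e.\ $x$ is location-forced in the paper's terminology. But your proof stops exactly where it would have to deliver its conclusion: you never establish that some vertex is not location-forced. You explicitly label this ``the main obstacle'', and the descent you sketch (delete the lower near-twin of a maximal covering pair and recurse) is left with the unresolved difficulties you yourself name --- new twins may be created in the smaller graph, and no extremal choice of $x$ works uniformly. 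As written, this is a genuine gap rather than a routine verification left to the reader.

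The missing ingredient is Bondy's theorem on induced subsets (Theorem~\ref{thm:bondy}): for any family of $n$ distinct subsets of an $n$-set there is an element whose deletion keeps all $n$ sets distinct. Applied to the closed neighbourhoods $N[v]$ --- distinct by twin-freeness --- it immediately produces a vertex that lies in no singleton symmetric difference, i.e.\ a non-location-forced vertex; this is exactly Corollary~\ref{cor:one-dom-forced}. Note that this statement is purely set-theoretic: it needs neither connectivity nor any control over what happens after deleting a vertex from the graph, so the bookkeeping you were worried about is entirely avoidable. With that lemma in hand your argument closes in one line, just as the paper's proof does: some vertex must be domination-forced, a domination-forced vertex of a reflexive symmetric digraph has no neighbour other than itself (the vertex it uniquely dominates must be itself, because of the loops), and connectivity then forces $n=1$.
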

	
	\begin{theorem}[{\cite[Theorem 9]{FNP13}}]\label{thm:IDcodes-oriented}
		For a connected and reflexive locatable digraph $D$ of order $n$ without directed 2-cycles, $\gamma_{OL}(D)=n$ if and only if the digraph obtained from $D$ by removing all loops is the transitive closure of a rooted directed tree.
	\end{theorem}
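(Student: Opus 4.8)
The plan is to exploit the fact that OLD sets are closed under taking supersets: if $S\subseteq S'$ and $S$ is an OLD set, then so is $S'$, since domination and the distinguishing witness for every pair are preserved. Consequently $\gamma_{OL}(D)=n$ holds if and only if no set of the form $V(D)\setminus\{w\}$ is an OLD set. First I would record the resulting local criterion: since $D$ is reflexive, $v\in N^-(v)$ for every $v$, so deleting a vertex $w$ destroys the OLD property exactly when $w$ is \emph{critical}, meaning that either (domination) some vertex $v$ satisfies $N^-(v)=\{w\}$, or (location) some pair $u,v$ satisfies $N^-(u)\triangle N^-(v)=\{w\}$. Thus $D$ is extremal if and only if every vertex is critical, and the whole proof reduces to understanding this condition together with the hypotheses (reflexive, no directed $2$-cycles, connected, locatable).

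For the direction assuming $D'$ is the transitive closure of a rooted directed tree $T$ with root $r$, I would show directly that every vertex is critical. Here $N^-(v)$ equals the set of vertices on the directed path from $r$ to $v$ in $T$. The root satisfies $N^-(r)=\{r\}$, so $r$ is critical by the domination clause. For a non-root $v$ with parent $p$, the path to $v$ extends the path to $p$ by the single vertex $v$, so $N^-(v)=N^-(p)\cup\{v\}$ and hence $N^-(p)\triangle N^-(v)=\{v\}$; thus the only vertex distinguishing $p$ and $v$ is $v$ itself, so $v$ is critical and deleting $v$ leaves this pair unresolved. (Locatability and connectivity hold automatically, since distinct vertices have distinct root-paths and $T$ is connected.) This gives extremality.

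For the converse I would reduce the tree structure to a single \emph{parent lemma}: in an extremal $D$, every vertex $v$ with $N^-(v)\neq\{v\}$ satisfies $N^-(v)\setminus\{v\}=N^-(p)$ for some vertex $p=p(v)$. Granting this, the argument closes cleanly. Since $|N^-(p(v))|=|N^-(v)|-1$, iterating $p$ strictly decreases $|N^-(\cdot)|$, so the parent map is acyclic and every vertex reaches a \emph{source} $s$ with $N^-(s)=\{s\}$; unwinding yields $N^-(v)=\{v,p(v),p^2(v),\dots,s\}$, i.e.\ $N^-(v)$ is exactly the set of vertices on the parent-path from $v$ to its source. Hence every in-neighbour of $v$ lies in $v$'s own tree, so connectivity of $D$ forbids more than one tree, producing a single rooted directed tree whose arcs $p(v)\to v$ (each an arc of $D'$, since $p(v)\in N^-(p(v))\subseteq N^-(v)$) point away from the root and whose transitive closure is exactly $D'$, as required.

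The main obstacle is proving the parent lemma, equivalently showing that the arc relation of $D'$ is transitively closed with a unique source and chain-shaped in-neighbourhoods — the combinatorial heart of the statement. I would establish it from the criticality condition using the general structural and constructive description of extremal digraphs developed earlier in the paper, or, failing that, by induction on $n$: identify a sink $v$ of $D'$ whose deletion changes no other in-neighbourhood and whose only size-one symmetric difference witnesses $v$'s own criticality, verify that $D-v$ is again extremal and connected, and recurse. The delicate point — and where the no-$2$-cycle hypothesis enters — is guaranteeing that such a removable sink exists, i.e.\ ruling out directed cycles and non-nested in-neighbourhood overlaps purely from local criticality; that this hypothesis is genuinely needed is shown by the reflexive directed triangle, which is locatable with no directed $2$-cycles yet admits an OLD set of size $2$, hence is not extremal.
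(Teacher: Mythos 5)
Your reduction to ``every vertex is critical/forced'' and your forward direction match the paper's proof exactly, and your derivation of the tree structure from the parent lemma (acyclicity of the parent map via $|N^-(\cdot)|$, one source per component, connectivity forcing a single tree, arcs of $D'$ being exactly the ancestor relation) is correct and, if anything, slightly more streamlined than the paper's route through the auxiliary digraph $\mathcal{H}(D)$. However, there is a genuine gap: the parent lemma itself --- which you correctly identify as ``the combinatorial heart of the statement'' and the only place the no-directed-2-cycle hypothesis enters --- is never proved. You offer to ``establish it from the general structural description developed earlier in the paper'' or by an induction whose key step (existence of a removable sink of $D'$) you yourself flag as the delicate point and do not carry out. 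As it stands, criticality of $v$ only tells you that $v$ is the unique dominator of \emph{some} vertex or the unique separator of \emph{some} pair $(u,w)$; your lemma requires the much stronger conclusion that the witness can always be taken to be the pair $(v,p)$ with $N^-(p)=N^-(v)\setminus\{v\}$, i.e.\ that $v$ forces ``through its own loop,'' and nothing in the proposal rules out that $v$ is instead critical only because of some pair $(u,w)$ with $u,w\neq v$.

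For comparison, the paper closes exactly this gap with a short direct argument: if $v$ were the unique separator of a pair $(u,w)$ with $u\neq v$, so that $N^-(u)\ominus N^-(w)=\{v\}$ with $v\in N^-(u)$, then reflexivity gives $u\in N^-(u)$ and $w\in N^-(w)$; since $u,w\neq v$ and the symmetric difference is $\{v\}$, both $u\in N^-(w)$ and $w\in N^-(u)$, producing the directed $2$-cycle $uw,wu$ (domination-forcing through a non-loop is ruled out even more easily, since $N^-(u)=\{v\}$ with $u\neq v$ contradicts the loop at $u$). Combined with the fact that every vertex must be critical, this yields precisely your parent lemma. So your skeleton is sound and close in spirit to the paper's, but the proposal is incomplete without this two-line argument (or an actual proof of the removable-sink induction step); your observation that the reflexive directed triangle is non-extremal correctly shows the hypothesis is needed but does not substitute for using it.
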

	
	\begin{theorem}[{\cite[Theorem 1]{FGRS21}}]\label{thm:half-graphs}
		For a connected, symmetric and loop-free locatable digraph $D$ of order $n$, $\gamma_{OL}(D)=n$ if and only if the underlying graph of $D$ is a half-graph.
	\end{theorem}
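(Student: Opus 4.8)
The plan is to work entirely with the underlying undirected graph $G$ of $D$: since $D$ is symmetric and loop-free, an OLD set of $D$ is exactly an OLD set of $G$, so $G$ is connected, has minimum degree at least $1$, and has no two vertices with equal open neighbourhood (no \emph{open twins}); moreover $\gamma_{OL}(D)=n$ if and only if the only OLD set of $G$ is $V(G)$, i.e. every vertex of $G$ is \emph{critical} (its deletion from $V(G)$ destroys the OLD property). I write $N(v)$ for the open neighbourhood in $G$. For the easy direction, suppose $G$ is the half-graph $H_k$ with sides $a_1,\dots,a_k$ and $b_1,\dots,b_k$ and $a_i\sim b_j \iff i\le j$, so that $N(a_i)=\{b_i,\dots,b_k\}$ and $N(b_j)=\{a_1,\dots,a_j\}$. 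I would simply check that each vertex is critical: deleting $a_1$ leaves $b_1$ (whose only neighbour is $a_1$) undominated, and symmetrically deleting $b_k$ undominates $a_k$; deleting an intermediate $b_j$ ($1\le j\le k-1$) makes $N(a_j)$ and $N(a_{j+1})$, which differ exactly in $b_j$, coincide on the remaining set, and symmetrically deleting $a_i$ ($2\le i\le k$) merges the traces of $b_{i-1}$ and $b_i$. Hence no proper subset of $V$ is OLD and $\gamma_{OL}(H_k)=2k$.

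For the converse I would first translate criticality into two types. Because $V$ itself is OLD, deleting a vertex $x$ fails either domination — then some vertex has $N=\{x\}$, so $x$ is the unique neighbour of a degree-$1$ vertex — or location — then some pair $u,v$ satisfies $N(u)\triangle N(v)=\{x\}$, which (as the sets are distinct) means $N(v)\subsetneq N(u)$ with $N(u)\setminus N(v)=\{x\}$, a covering pair in the inclusion order. The core of the proof is to upgrade these purely local statements into a global nesting: I would show that extremality forces $G$ to be bipartite with the open neighbourhoods on each side of its bipartition linearly ordered by inclusion (a \emph{chain graph}; equivalently, $G$ has no induced $2K_2$, $C_3$ or $C_5$). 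The natural route is the contrapositive: if $G$ is not a chain graph it contains a redundant configuration (an induced $2K_2$, triangle, or $5$-cycle), from which I would exhibit a single vertex whose deletion keeps an OLD set, contradicting that every vertex is critical.

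Once $G$ is known to be a connected chain graph, I would finish by a short counting argument. For a connected bipartite graph the bipartition $(A,B)$ is unique; order each side so that its neighbourhoods form a strict inclusion chain. Since there are no open twins and the minimum degree is at least $1$, the neighbourhoods on the side of size $q$ are distinct nonempty nested subsets of the other side of size $p$, and there can be at most $p$ such subsets (sizes $1,\dots,p$), forcing $q\le p$; symmetrically $p\le q$. Hence $p=q=k$ and both chains are \emph{saturated}, realizing all sizes $1,\dots,k$. Re-indexing so that $|N(b_j)|=j$ recovers exactly the incidences $a_i\sim b_j\iff i\le j$, i.e. $G\cong H_k$.

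I expect the main obstacle to be the middle step — deriving the global chain structure (including bipartiteness) from the single-vertex criticality conditions. The delicate point is that the vertex whose deletion we want to exploit may lie outside the local obstruction, so the choice must be made globally (for instance a vertex with inclusion-minimal or inclusion-maximal neighbourhood), and one must verify that removing it preserves \emph{both} domination and location simultaneously, which is precisely where connectivity and the absence of open twins enter. Alternatively, this middle step can be bypassed by specializing the general structural description of extremal digraphs developed in this paper to the symmetric, loop-free case.
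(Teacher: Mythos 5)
Your reduction to the underlying undirected graph, your verification of the easy direction, and your final counting step (a connected chain graph with no open twins and no isolated vertices must be $H_k$) are all sound. But the hard direction has a genuine gap exactly where you flag it: the claim that extremality forces $G$ to be a chain graph is stated as a plan, not proved. Asserting that an induced $2K_2$, $C_3$ or $C_5$ yields a non-critical vertex is essentially a restatement of the theorem's difficulty, and your own caveat --- that the removable vertex may lie outside the local obstruction, so it must be chosen globally and its removal must preserve domination and location for \emph{all} pairs simultaneously --- describes precisely the argument that is missing. Until that implication is established, the proof does not go through: exhibiting one vertex $x$ such that $V\setminus\{x\}$ is still an OLD set requires ruling out that $x$ is the unique symmetric difference of \emph{any} pair of neighbourhoods, and no single local configuration obviously controls all such pairs.

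The paper takes the route you mention only in passing at the end: it specializes the general forcing machinery of Section~\ref{sec:general}. By Theorem~\ref{thmmain} the forcing arcs of an extremal digraph partition $V(D)$ into directed cycles; the first (and nontrivial) step of the paper's proof shows that in the symmetric loop-free case every such cycle has length exactly~$2$, by chasing the witnesses $c_i'$ with $N^-(c_i)\setminus\{c_{i-1}\}=N^-(c_i')$ around a hypothetical longer cycle until a contradiction with symmetry appears. Corollary~\ref{cor:one-dom-forced} then supplies a domination-forced vertex $v_1$, and the two sides of the half-graph are built by iterating the location-forcing relation ($u_1=f^-(v_1)$, then $v_2$ with $N^-(v_1)\setminus N^-(v_2)=\{u_1\}$, then $u_3$, and so on) until another domination-forced vertex is reached; connectivity forces this chain to exhaust $V(D)$. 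This global cycle-plus-chain argument is what replaces, and effectively proves, the nesting step you left open. To salvage your outline you would need to supply that step yourself, which in practice amounts to re-deriving the same forcing structure.
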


	Also note that both OLD sets and identifying codes can be seen as a special case of discriminating codes in bipartite graphs, studied in~\cite{CCCHL08,CCHL08}. Given a bipartite graph $G$ with partite sets $I$ and $A$, a \emph{discriminating code} of $G$ is a subset $C$ of vertices of $A$ such that each vertex of $I$ has a unique and nonempty neighbourhood within $C$. Given a digraph $D$, one can construct a bipartite graph where $I$ and $A$ are two copies of $V(D)$ and a vertex in $I$ is adjacent to all vertices in $A$ corresponding to its in-neighbours in $D$. Now, a subset $C$ of $A$ is a discriminating code in the bipartite graph if and only if it is an OLD set in $D$. A similar construction (with closed in-neighbourhoods instead of open in-neighbourhoods) can be done for identifying codes~\cite{FNP13}. The problem of studying those bipartite graphs where all vertices of $A$ are required in any discriminating code was one of the main problems studied in~\cite{CCCHL08}, and thus the present paper partially answers this question.
	
	\paragraph{Our results.} We first study, in Section~\ref{sec:general}, general properties of digraphs of order $n$ with OLD number $n$. In such digraphs, every vertex is needed in every OLD set, either to dominate a vertex, or to locate a pair of vertices. Such vertices are called \emph{forced}. We show that however, in such a digraph, no vertex can be double-forced (i.e. forced because of two distinct reasons). We also show that the vertex set of such graphs can always be partitioned into subsets, each of which contains a spanning directed cycle.
	We then give a characterization of the (very rich) class of digraphs of order $n$ with OLD number $n$. We use the found structural properties and the characterization to give new proofs of Theorem~\ref{thm:IDcodes-n-1}, Theorem~\ref{thm:IDcodes-oriented} and Theorem~\ref{thm:half-graphs} in Section~\ref{sec:known}. Then, we focus, in Section~\ref{sec:trees}, on the class of extremal di-trees, 
	and give a recursive and constructive characterization of these digraphs. We conclude in Section~\ref{conclu}.

	\section{Structural properties of extremal digraphs}
\label{sec:general}

We now describe the structure of digraphs whose only OLD set is the whole vertex set. There are many such digraphs, as we will see. To achieve this, we will first prove some preliminary results.

\subsection{Forced vertices} 

In a locatable digraph $D$, some vertices have to belong to any OLD set: we call such vertices \emph{forced}, as was done in e.g.~\cite{FP12} in the context of identifying codes. There are two types of forced vertices: those that are forced because of the domination condition, and those that are forced because of the location condition.

\begin{definition}\label{defforcedver} Let $D$ be a locatable digraph. A vertex $v$ of $D$ is called \emph{domination-forced} if there exists a vertex $w$, such that $v$ is the unique in-neighbour of $w$. Vertex $v$ is called \emph{location-forced} if there exist two distinct vertices $x$ and $y$, such that $N^-(x)\ominus N^-(y)=\{v\}$ (where $A\ominus B$ denotes the symmetric difference of two sets $A$ and $B$). A vertex $v$ is called \emph{double-forced}, if either it is both domination-forced and location-forced, or it is location-forced because of two different pairs of vertices.
\end{definition}

We can observe the following.

\begin{proposition}\label{prop:all-forced}
	If there is a vertex $v$ in a locatable digraph $D$ which is neither domination-forced nor location-forced, then $V(D)\setminus\{v\}$ is an OLD set of $D$.
\end{proposition}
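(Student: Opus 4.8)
The plan is to verify directly that $S := V(D)\setminus\{v\}$ satisfies both defining conditions of an OLD set, exploiting two ingredients: that $D$ is locatable (so $V(D)$ itself is an OLD set, which gives us ``baseline'' facts about in-neighbourhoods) together with the hypothesis that $v$ is neither domination-forced nor location-forced. The two conditions of the definition correspond exactly to the two types of forcing, so the whole proof is a matter of unpacking the negated forcing hypotheses.

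First I would check the domination condition (i) for $S$. Take an arbitrary vertex $w$. Since $D$ is locatable, $V(D)$ is an OLD set, so $w$ has at least one in-neighbour, i.e.\ $N^-(w)\neq\emptyset$. Now $v$ cannot be the \emph{unique} in-neighbour of $w$: if it were, then $v$ would be domination-forced, contradicting the hypothesis. Hence $N^-(w)$ contains a vertex distinct from $v$, and that vertex lies in $S$. Thus every $w$ is dominated by $S$.

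Next I would check the location condition (ii) for $S$. Take any two distinct vertices $x,y$. Since $V(D)$ is an OLD set, there is a vertex that is an in-neighbour of exactly one of $x,y$, which is to say $N^-(x)\ominus N^-(y)\neq\emptyset$. I then argue that this symmetric difference cannot equal exactly $\{v\}$: if it did, then by definition $v$ would be location-forced, again contradicting the hypothesis. Therefore $N^-(x)\ominus N^-(y)$ contains some vertex distinct from $v$, i.e.\ a vertex of $S$ that is an in-neighbour of exactly one of $x$ and $y$. This establishes condition (ii), and together with the previous paragraph it shows that $S=V(D)\setminus\{v\}$ is an OLD set.

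There is no real obstacle here; the only subtlety worth flagging is that locatability is genuinely needed, and not merely assumed for well-definedness. It is precisely what guarantees the two baseline facts $N^-(w)\neq\emptyset$ and $N^-(x)\ominus N^-(y)\neq\emptyset$; without them the negations of ``domination-forced'' and ``location-forced'' (which only say that the relevant sets are not \emph{equal} to a singleton containing $v$) would not suffice to conclude. Once these facts are in place, each verification reduces to the set-theoretic observation that a nonempty set which is not equal to $\{v\}$ must meet $V(D)\setminus\{v\}$.
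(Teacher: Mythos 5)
Your proof is correct and follows exactly the same route as the paper's (much terser) proof: the negation of domination-forced yields condition (i) and the negation of location-forced yields condition (ii), with locatability providing the nonemptiness of the relevant sets. Your explicit remark that locatability is genuinely needed is a fair observation, but the argument itself is the same.
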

\begin{proof}
	Since $v$ is not domination-forced, every vertex of $D$ has an in-neighbour in $V(D)\setminus\{v\}$. Moreover, since $v$ is not location-forced, for every pair $z,w$ of distinct vertices in $D$, there is a vertex in $V(D)\setminus\{v\}$ in the symmetric difference $N^-(z)\ominus N^-(w)$, which therefore distinguishes $z$ and $w$.
\end{proof}

Proposition~\ref{prop:all-forced} implies that in any extremal digraph $D$, 
every vertex is domination-forced or location-forced (or both). (In fact, we will show in Proposition~\ref{propblueblue} that 
no vertex of $D$ could be both domination-forced and location-forced.)

We get a direct corollary of Proposition~\ref{prop:all-forced}, which will be used several times in the proofs of Section~\ref{sec:trees}.

\begin{corollary}\label{cor-notdomforc-locforc}
	Let $D$ be an extremal digraph. 
	If a vertex is not domination-forced (resp. location-forced), then it must be location-forced (resp. domination-forced).
\end{corollary}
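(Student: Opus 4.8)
The plan is to derive Corollary~\ref{cor-notdomforc-locforc} directly from Proposition~\ref{prop:all-forced} by contraposition. The key observation is that Proposition~\ref{prop:all-forced} already tells us that any vertex which is \emph{neither} domination-forced \emph{nor} location-forced can be removed from the whole vertex set while retaining an OLD set; in a digraph with $\gamma_{OL}(D)=n$, such a removal is impossible, since $V(D)\setminus\{v\}$ has size $n-1<n=\gamma_{OL}(D)$ and hence cannot be an OLD set.

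Concretely, I would argue as follows. Suppose $v$ is a vertex that is not domination-forced. I want to conclude that $v$ is location-forced. Assume for contradiction that $v$ is not location-forced either. Then $v$ is neither domination-forced nor location-forced, so by Proposition~\ref{prop:all-forced} the set $V(D)\setminus\{v\}$ is an OLD set of $D$. But then $\gamma_{OL}(D)\le |V(D)\setminus\{v\}| = n-1$, contradicting the hypothesis $\gamma_{OL}(D)=n$. Hence $v$ must be location-forced. The symmetric statement (not location-forced implies domination-forced) follows by exactly the same argument with the roles of the two conditions swapped, since Proposition~\ref{prop:all-forced} treats the two failures symmetrically.

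There is essentially no obstacle here: the corollary is a routine logical consequence (a contrapositive) of the proposition, packaged into the specific extremal setting $\gamma_{OL}(D)=n$. The only point requiring a line of care is noting that $D$ is locatable, so that Proposition~\ref{prop:all-forced} applies; but this is implicit, since $\gamma_{OL}(D)=n$ presupposes that $D$ admits an OLD set (namely $V(D)$ itself, which is an OLD set precisely when $D$ is locatable). Thus the whole proof is a short deduction and no genuine difficulty arises.
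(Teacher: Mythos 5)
Your proof is correct and is exactly the paper's (implicit) argument: the corollary is stated as a direct consequence of Proposition~\ref{prop:all-forced}, obtained by noting that a vertex which is neither domination-forced nor location-forced could be removed to yield an OLD set of size $n-1$, contradicting $\gamma_{OL}(D)=n$. Your added remark that $D$ is locatable because $V(D)$ itself is an OLD set is a fine point of care but not a departure from the paper.
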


Before proving our characterization, we will use the following celebrated theorem of Bondy, which is important for our line of work (see for example~\cite{FNP13} and references therein).

\begin{theorem}[Bondy's Theorem \cite{B72}]\label{thm:bondy}
	Let $V$ be an $n$-set, and $\mathcal{A}=\{\mathcal{A}_1,\mathcal{A}_2,\ldots,\mathcal{A}_n\}$ be a family of $n$ distinct subsets of $V$. There is an $(n-1)$-subset $X$ of $V$ such that the sets $\mathcal{A}_1\cap X, \mathcal{A}_2\cap X, \mathcal{A}_3\cap X,\ldots, \mathcal{A}_n\cap X$ are still distinct.
\end{theorem}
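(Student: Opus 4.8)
The plan is to identify exactly those elements of $V$ whose removal would destroy distinctness, and then to show that there are at most $n-1$ of them; since $|V|=n$, at least one element can be safely removed. Call an element $x\in V$ \emph{bad} if removing it causes a collision, that is, if there exist distinct indices $i,j$ with $\mathcal{A}_i\ominus \mathcal{A}_j=\{x\}$ (so that $\mathcal{A}_i$ and $\mathcal{A}_j$ become equal once we intersect with $V\setminus\{x\}$). If some element $x$ is not bad, then $X=V\setminus\{x\}$ immediately witnesses the theorem, so it suffices to prove that the set of bad elements has size at most $n-1$.

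To bound the number of bad elements, I would build an auxiliary graph $G$ on the vertex set $\{\mathcal{A}_1,\ldots,\mathcal{A}_n\}$ as follows: for each bad element $x$, choose one pair $\mathcal{A}_i,\mathcal{A}_j$ with $\mathcal{A}_i\ominus \mathcal{A}_j=\{x\}$ and add the edge $\mathcal{A}_i\mathcal{A}_j$, labelled by $x$. By construction the labels on the edges of $G$ are pairwise distinct, and the number of edges of $G$ equals the number of bad elements. The key step is then to show that $G$ is a forest.

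The forest claim follows from a telescoping argument on symmetric differences. Suppose, for contradiction, that $G$ contains a cycle $\mathcal{A}_{i_1},\mathcal{A}_{i_2},\ldots,\mathcal{A}_{i_k},\mathcal{A}_{i_1}$, with consecutive edges labelled $x_1,\ldots,x_k$. Since these are distinct edges of $G$, their labels are pairwise distinct. Taking the symmetric difference of the consecutive set-differences around the cycle, everything telescopes to $\mathcal{A}_{i_1}\ominus \mathcal{A}_{i_1}=\emptyset$; on the other hand, it equals $\{x_1\}\ominus\{x_2\}\ominus\cdots\ominus\{x_k\}=\{x_1,\ldots,x_k\}$, which is nonempty precisely because the $x_\ell$ are distinct. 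This contradiction shows that $G$ is acyclic.

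Finally, a forest on $n$ vertices has at most $n-1$ edges, so there are at most $n-1$ bad elements. As $|V|=n$, at least one element $x$ is not bad, and $X=V\setminus\{x\}$ is the desired $(n-1)$-subset. The only genuine obstacle here is spotting the acyclicity argument; once the auxiliary graph is set up so that each label is used on a single edge, the telescoping of symmetric differences around a putative cycle does all the work, and the standard edge bound for forests finishes the proof.
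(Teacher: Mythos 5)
Your argument is correct, but note that the paper does not prove this statement at all: Bondy's Theorem is quoted as a known result from the cited reference \cite{B72}, so there is no in-paper proof to compare against. What you have written is the standard graph-theoretic proof of Bondy's theorem (essentially the one that also underlies the ``forced vertex'' analysis elsewhere in this paper): call $x$ bad if some pair satisfies $\mathcal{A}_i\ominus\mathcal{A}_j=\{x\}$, encode one such pair per bad element as a labelled edge on the $n$ sets, observe that a cycle would force $\emptyset=\{x_1,\dots,x_k\}$ by telescoping symmetric differences, and conclude from the forest bound that there are at most $n-1$ bad elements. All steps check out; the only point you leave implicit is that distinct bad elements necessarily produce distinct edges (if $x\neq y$ both selected the pair $\{i,j\}$, then $\mathcal{A}_i\ominus\mathcal{A}_j$ would equal both $\{x\}$ and $\{y\}$), which is needed to equate the number of edges with the number of bad elements; this is immediate but worth a sentence. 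It is also worth observing that your ``bad element'' notion is exactly the paper's notion of a location-forced vertex (Definition~\ref{defforcedver}), and your forest bound is precisely the content of Corollary~\ref{cor:one-dom-forced}, so your proof makes the logical dependence run in the opposite direction from the paper's: you prove the combinatorial core directly, whereas the paper derives its corollary from the cited theorem.
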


\begin{corollary}\label{cor:one-dom-forced}
	Every locatable digraph $D$ of order $n$ has at most $n-1$ location-forced vertices.
\end{corollary}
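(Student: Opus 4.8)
The plan is to apply Bondy's Theorem (Theorem~\ref{thm:bondy}) directly to the family of in-neighbourhoods of $D$. Since $D$ is locatable, it has no in-twins, so the sets $N^-(v)$ for $v \in V(D)$ form a family of $n$ \emph{distinct} subsets of the $n$-set $V(D)$. This is exactly the setting required by Bondy's Theorem.

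First I would invoke Theorem~\ref{thm:bondy} with $V = V(D)$ and $\mathcal{A} = \{N^-(v) : v \in V(D)\}$. This yields an $(n-1)$-subset $X \subseteq V(D)$ such that the restricted sets $N^-(v) \cap X$ remain pairwise distinct. Writing $X = V(D) \setminus \{u\}$ for the unique omitted vertex $u$, the goal becomes to show that $u$ cannot be location-forced.

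The key step is to translate the location-forcing condition into a statement about restrictions to $X$. By definition, $u$ is location-forced precisely when there exist distinct vertices $x,y$ with $N^-(x) \ominus N^-(y) = \{u\}$. This symmetric-difference condition says exactly that $N^-(x)$ and $N^-(y)$ agree on every vertex except $u$; equivalently, $N^-(x) \cap X = N^-(y) \cap X$ while $N^-(x) \neq N^-(y)$. But this directly contradicts the conclusion of Bondy's Theorem, which guarantees that distinct in-neighbourhoods stay distinct after intersecting with $X$. Hence no such pair $x,y$ exists, so $u$ is not location-forced.

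Since we have exhibited at least one vertex, namely $u$, that is not location-forced, at most $n-1$ vertices of $D$ can be location-forced, which is the claim. I do not expect a genuine obstacle here: the only subtlety is the elementary reformulation of the condition $N^-(x) \ominus N^-(y) = \{u\}$ as "the in-neighbourhoods of $x$ and $y$ become equal upon deleting $u$", after which Bondy's Theorem finishes the argument immediately.
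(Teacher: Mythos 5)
Your proposal is correct and follows exactly the paper's argument: apply Bondy's Theorem to the family of open in-neighbourhoods (distinct because $D$ is locatable) and observe that the vertex omitted from the resulting $(n-1)$-set cannot be location-forced. Your explicit translation of $N^-(x)\ominus N^-(y)=\{u\}$ into the restricted sets coinciding is the same reasoning the paper leaves implicit.
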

\begin{proof}
	Construct from digraph $D$ the set system with $V(D)$ as its $n$-set and where the $A_i$'s are all the open in-neighbourhoods of vertices of $D$. Theorem~\ref{thm:bondy} implies that there is one vertex such that removing it does not create two same open in-neighbourhoods. In other words, this vertex is not location-forced.
\end{proof}

\subsection{Forcing arcs}

When a vertex $x$ is forced, either it is the unique in-neighbour of some vertex $y$, or there are two vertices $y,z$ such that $x$ is the only vertex in the symmetric difference between $N^-(y)$ and $N^-(z)$, and $x\in N^-(y)$. In some sense, the arc $xy$ is remarkable in that respect. We highlight such arcs as follows.

\begin{definition}\label{def:red-edges}
	Let $D=(V,A)$ be an extremal digraph.
	Then the arc $xy\in A$ is called a \emph{forcing arc} if either $N^-(y)=\{x\}$ or there is a vertex $z\in V$ such that $N^-(y)\setminus N^-(z) =\{x\}$. A \emph{forcing cycle} is a cycle all whose arcs are forcing arcs.
\end{definition}

Note that a vertex is forced if and only if it is the tail of a forcing arc. Thus, if $D$ has only forced vertices, every vertex has a forcing outgoing arc.

The next lemma is important for our study.

\begin{lemma}\label{lem-nonrededge}
	Let $D$ be an extremal digraph. 
	Let $x$ be an arbitrary vertex of $D$ and let $D'$ be the digraph obtained from $D$ by deleting all non-forcing arcs of $D$, which have $x$ as their tails. Then, $D'$ is locatable and extremal. 
	Moreover, if $x$ is not location-forced, then $D$ and $D'$ have the same sets of forcing arcs.
\end{lemma}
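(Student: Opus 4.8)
The plan is to first pin down exactly how the deletion changes the data we care about. Since we only remove arcs whose tail is $x$, the in-neighbourhood $N^-(v)$ is unchanged for every $v$ except those heads $v$ of a \emph{deleted} (i.e. non-forcing) arc $xv$; writing $U$ for this set of heads, we have $N^-_{D'}(v)=N^-_D(v)\setminus\{x\}$ for $v\in U$ and $N^-_{D'}(v)=N^-_D(v)$ otherwise. To prove $D'$ is locatable I would rule out the two failure modes. Domination: if $v\in U$ then $xv$ being non-forcing forces $N^-_D(v)\neq\{x\}$ (otherwise $xv$ is forcing by the first clause of Definition~\ref{def:red-edges}), so $v$ keeps an in-neighbour. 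In-twins: if two vertices became equal in $D'$ while being distinct in $D$, the discrepancy was caused by $x$, and a short case check (both in $U$, both out, or one in one out) shows the only surviving possibility is $v\in U$, $v'\notin U$ with $N^-_D(v)=N^-_D(v')\cup\{x\}$; but then $N^-_D(v)\setminus N^-_D(v')=\{x\}$, so $xv$ is a forcing arc, contradicting $v\in U$.

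For $\gamma_{OL}(D')=n$ I would show every vertex is still forced in $D'$, so that Proposition~\ref{prop:all-forced} (contrapositive) yields the claim. Since $\gamma_{OL}(D)=n$, every vertex $w$ is forced in $D$. If $w$ is domination-forced, say $N^-_D(b)=\{w\}$, the computation above gives $b\notin U$, hence $N^-_{D'}(b)=\{w\}$ and $w$ stays domination-forced. If $w$ is location-forced, fix a witness $N^-_D(p)=N^-_D(q)\cup\{w\}$ with $w\notin N^-_D(q)$, so that $N^-_D(p)\ominus N^-_D(q)=\{w\}$. If $w=x$, then $xp$ is itself forcing (witness $q$) and is kept, so the witness survives. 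If $x\notin N^-_D(p)$, or if $x\in N^-_D(p)\cap N^-_D(q)$ with $p,q$ on the same side of $U$ (both in or both out), then removing $x$ affects both in-neighbourhoods compatibly and the symmetric difference in $D'$ is still exactly $\{w\}$.

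The delicate case, and the step I expect to be the main obstacle, is when $x\in N^-_D(p)\cap N^-_D(q)$ and \emph{exactly one} of $p,q$ lies in $U$. I would first eliminate the orientation $q\in U$, $p\notin U$: since $xp$ is forcing there is a witness $z$ with $N^-_D(p)\setminus N^-_D(z)=\{x\}$, and plugging the witness relation into $N^-_D(q)=N^-_D(p)\setminus\{w\}$ yields $N^-_D(q)\setminus N^-_D(z)=\{x\}$, which makes $xq$ forcing and contradicts $q\in U$. In the remaining orientation $p\in U$, $q\notin U$, the arc $xq$ is forcing; if $N^-_D(q)=\{x\}$ then $N^-_{D'}(p)=\{w\}$ and $w$ becomes domination-forced, while otherwise one takes the witness $z$ of $xq$ and verifies $N^-_{D'}(p)\setminus N^-_{D'}(z)=\{w\}$. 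The hard point is to upgrade this to a genuine location witness, i.e. to guarantee the symmetric difference is \emph{exactly} $\{w\}$ and not $\{w\}$ together with spurious extra in-neighbours of $z$; I expect to handle this by choosing $z$ with smallest in-neighbourhood and arguing that any surviving extra element, combined with $\gamma_{OL}(D)=n$ and the non-forcing-ness of $xp$, would force some vertex of $D$ to be unforced — a contradiction.

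For the moreover part, assume $x$ is not location-forced; by Corollary~\ref{cor-notdomforc-locforc} it is then domination-forced. I would prove both inclusions of forcing-arc families. That no forcing arc of $D$ is destroyed is read off from the computations of the previous two paragraphs. That no new forcing arc is created is where the hypothesis is used: if some arc $ab$ were forcing in $D'$ but not in $D$, the new forcing-ness must originate from an $x$-deletion at $b$ or at its witness, and chasing Definition~\ref{def:red-edges} shows this would make $a$ and $x$ have identical occurrence patterns among all in-neighbourhoods; but then $x$ is neither domination- nor location-forced, contradicting $\gamma_{OL}(D)=n$. The assumption that $x$ is not location-forced is precisely what prevents the appearance of a fresh singleton symmetric difference $\{x\}$ elsewhere, so the two forcing-arc families coincide.
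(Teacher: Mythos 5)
Your overall architecture mirrors the paper's: locatability of $D'$ via ``a deleted arc that created an in-twin or an undominated vertex would have been forcing'', followed by a case analysis showing every forced vertex of $D$ stays forced in $D'$. The genuine gap is in the mixed case where exactly one of $p,q$ lies in $U$, and you have the two sub-cases exactly backwards. Your elimination of the orientation $q\in U$, $p\notin U$ does not work: the witness $z$ for the location-forcing arc $xp$ satisfies $N^-_D(z)=N^-_D(p)\setminus\{x\}$, and combining this with $N^-_D(q)=N^-_D(p)\setminus\{w\}$ gives $N^-_D(q)\setminus N^-_D(z)=\{x\}$ but also $N^-_D(z)\setminus N^-_D(q)=\{w\}$; the symmetric difference is $\{x,w\}$, so $z$ does not exhibit $x$ as location-forced via $q$, and $xq$ need not be a forcing arc. (Reading the second clause of Definition~\ref{def:red-edges} as a bare one-sided difference would rescue your elimination, but that reading is incompatible with the note that the tail of a forcing arc is forced, on which the conclusion $\gamma_{OL}(D')=n$ rests.) So this case survives, and it is where the real work lies: the paper shows that in $D'$ the pair $(z,q)$ has symmetric difference exactly $\{w\}$, so $w$ remains location-forced --- but now via the forcing arc $wz$ rather than $wp$. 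This is the unique situation in which the forcing-arc sets of $D$ and $D'$ differ, and it occurs precisely when $x$ is location-forced; your ``moreover'' paragraph, which asserts that no forcing arc of $D$ is destroyed ``is read off from the computations'', is therefore false in this case, and it hides the only reason the hypothesis ``$x$ is not location-forced'' is needed.

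Symmetrically, the ``hard point'' you flag in the orientation $p\in U$, $q\notin U$ is not a real obstacle: since $xq$ is a location-forcing arc, $x$ is location-forced via $q$ and some $z$ with $N^-_D(z)=N^-_D(q)\setminus\{x\}$ \emph{exactly}; hence there are no spurious extra in-neighbours of $z$, one gets $N^-_{D'}(p)\ominus N^-_{D'}(z)=\{w\}$ immediately, and no minimal-in-neighbourhood argument is required. Your proposed fix for that non-problem is in any case only a sketch, not a proof. Once these two sub-cases are swapped into their correct roles, your argument coincides with the paper's.
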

\begin{proof}
	First, we prove that $D'$ is locatable. Towards a contradiction, suppose that there exist two in-twins: vertices $y$ and $z$ such that $N_{D'}^-(y)=N_{D'}^-(z)$. Since $D$ is locatable, we conclude that $x$ had exactly one of $y$ and $z$ as an out-neighbour in $D$; without loss of generality suppose that $xy$ is an arc of $D$. But then, $xy$ would be a forcing arc and it would not have been deleted from $D$, a contradiction. Moreover, assume that there is a vertex $t$ of in-degree~0 in $D'$. Then, $x$ must have been the only in-neighbour of $t$ in $D$, but then the arc $xt$ would be forcing and $t$ would have in-degree~1 in $D'$, a contradiction. Therefore, $D'$ is locatable.
	
	We will now show that every vertex of $D'$ is the tail of a forcing arc, implying that $\gamma_{OL}(D')=n$, and that if $x$ is not location-forced, then $D$ and $D'$ have the same sets of forcing arcs.
	
	By Proposition~\ref{prop:all-forced}, all vertices of $D$ are either domination-forced or location-forced. By deleting all non-forcing arcs of $D$ which have $x$ as their tail, it is clear that all domination-forced vertices remain domination-forced, and each forcing arc $xy$ having a domination-forced vertex as its tail remains forcing. Thus, to complete the proof, it remains only to consider location-forced vertices and those forcing arcs of $D$ that have a location-forced vertex as their tails.
	
	To this end, consider a location-forced vertex $t$ in $D$, that is, there are two vertices $u$ and $v$ such that $N_D^-(u)\setminus N_D^-(v)=\{t\}$; thus, $tu$ is a forcing arc in $D$. If $x$ is neither an in-neighbour of $u$ nor an in-neighbour of $v$, or if $x=t$, then in $D'$ we still have $N_{D'}^-(u)\setminus N_{D'}^-(v)=\{t\}$ and $tu$ is still a forcing arc in $D'$. Otherwise, $x$ is an in-neighbour of $u$ or $v$ and $x\neq t$, thus, $x$ must be an in-neighbour of both $u$ and $v$ in $D$ (possibly, $x=u$ or $x=v$). If both arcs $xu$ and $xv$ are not forcing or if both are forcing, then again in $D'$, $N_{D'}^-(u)\setminus N_{D'}^-(v)=\{t\}$ and $tu$ is still a forcing arc in $D'$. If $xu$ is forcing and $xv$ is not forcing in $D$, since $u$ has both $x$ and $t$ as in-neighbours, $xu$ is a location-forcing arc and there is a vertex $w$ with $N^-(u)\setminus N^-(w)=\{x\}$. We note that since $x\neq t$ and $t\in N_D(u)$, we have $tw\in A(D)$. But then, in $D'$, we have $N_{D'}^-(w)\setminus N_{D'}^-(v)=\{t\}$. Thus, $t$ is still location-forced in $D'$. Though the forcing arc $tu$ is no longer forcing in $D'$, now the arc $tw$ is forcing in $D'$ (and in that case we had that $x$ is location-forced). 
	
	Assume finally that $xv$ is forcing and $xu$ is not forcing in $D$. If $xv$ is forcing because of domination, it means that $x$ is the unique in-neighbour of $v$ in $D$, and thus $u$ is dominated only by $x$ and $t$; in $D'$, $u$ is dominated only by $t$, and thus in $D'$ the arc $tu$ remains forcing and $t$ is now domination-forced. Otherwise, $xv$ is forcing because of location: there is a vertex $w$ such that $N_{D}^-(v)\setminus N_{D}^-(w)=\{x\}$. We note that since $xu$ is non-forcing arc in $D$, and  $N_{D}^-(u)\setminus N_{D}^-(w)=\{x,t\}$, we conclude that in $D'$, we have $N_{D'}(u)\setminus N_{D'}(w)=\{t\}$. Thus, the arc $tu$ stays forcing in $D'$ and $t$ is still location-forced.
	
	This means that each vertex of $D'$ is either domination-forced or location-forced, and thus, we conclude that $\gamma_{OL}(D')=n$. Moreover, the only case where $D'$ and $D$ had different sets of forcing arcs occurred when $x$ was location-forced, as claimed.
\end{proof}

We next show that in an extremal digraph, 
no two forcing arcs can have the same tail.

\begin{proposition}\label{propblueblue}
	No extremal digraph contains a double-forced vertex.
\end{proposition}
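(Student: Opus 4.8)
The plan is to argue by contradiction, recasting the two forcing conditions into a single set-system and then running the cycle argument that underlies Bondy's theorem (Theorem~\ref{thm:bondy}). Write $\mathcal{A}_u=N^-(u)$ for each vertex $u$. Since $D$ is locatable, all the sets $\mathcal{A}_u$ are distinct (no in-twins) and nonempty (no vertex of in-degree~$0$). The key preliminary move is to add a virtual vertex $0$ with $\mathcal{A}_0=\emptyset$: then a domination reason ``$N^-(w)=\{u\}$'' becomes $\mathcal{A}_w\ominus\mathcal{A}_0=\{u\}$, which has exactly the same shape as a location reason $\mathcal{A}_x\ominus\mathcal{A}_y=\{u\}$. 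Because $\mathcal{A}_0=\emptyset$ differs from every (nonempty) real in-neighbourhood, the $n+1$ sets $\mathcal{A}_0,\dots,\mathcal{A}_n$ are still pairwise distinct.

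On the vertex set $\{0,1,\dots,n\}$ I would build the labelled graph $G^\ast$ whose edges are the pairs $\{a,b\}$ with $|\mathcal{A}_a\ominus\mathcal{A}_b|=1$, the (unique) element of $\mathcal{A}_a\ominus\mathcal{A}_b$ being the \emph{label} of that edge. Two observations make this work. First, $G^\ast$ is simple: a pair $\{a,b\}$ determines at most one label. Second, and crucially, the edges carrying a fixed label $\ell$ form a matching, since $\mathcal{A}_a\ominus\mathcal{A}_b=\{\ell\}=\mathcal{A}_a\ominus\mathcal{A}_c$ forces $\mathcal{A}_b=\mathcal{A}_c$, hence $b=c$. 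In this language, every forcing reason for a vertex $u$ is an edge of $G^\ast$ labelled $u$ (domination reasons are the edges meeting $0$, location reasons are the edges avoiding $0$).

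Now suppose $v$ is double-forced. The two reasons give two \emph{distinct} edges labelled $v$: in the ``domination $+$ location'' case one edge meets $0$ and the other does not, and in the ``two different location pairs'' case the two pairs differ as sets. Being distinct edges with a common label, they are automatically vertex-disjoint by the matching property. Using Proposition~\ref{prop:all-forced}, every vertex of $D$ is forced, so I select one edge labelled $u$ for each $u\neq v$ (distinct labels yield distinct edges) together with the two disjoint edges labelled $v$. This selects $n+1$ distinct edges on the $n+1$ vertices $\{0,\dots,n\}$, so the selected subgraph, being simple with at least as many edges as vertices, contains a cycle $a_0,a_1,\dots,a_{k-1},a_0$. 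Telescoping symmetric differences around it gives $\{\ell_0\}\ominus\{\ell_1\}\ominus\cdots\ominus\{\ell_{k-1}\}=\emptyset$, so each label occurs an even number of times along the cycle. Among the selected edges only the label $v$ occurs more than once (exactly twice), so the cycle can only use edges labelled $v$; but the two $v$-edges are vertex-disjoint and cannot close into a cycle, a contradiction. Hence no double-forced vertex exists.

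The main obstacle, and the step that carries the whole argument, is the unification of the two forcing conditions through the empty in-neighbourhood $\mathcal{A}_0=\emptyset$: once domination-forcing is rewritten as a singleton symmetric-difference condition, the parity/telescoping and edge-counting mechanism of Bondy's theorem applies verbatim. The only remaining care is to check that a double-forced vertex genuinely produces two \emph{distinct} labelled edges rather than the same edge counted twice, which is exactly where the separation of the two cases and the matching property are used.
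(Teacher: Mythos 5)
Your proof is correct, and it takes a genuinely different route from the paper's. The paper argues by induction on $n$: it takes a minimal counterexample (in the number of arcs), uses Corollary~\ref{cor:one-dom-forced} to find a domination-forced vertex $x$ that is not location-forced, invokes Lemma~\ref{lem-nonrededge} to prune non-forcing arcs and conclude $N^+(x)=\{y\}$, then contracts the arc $xy$ and applies the induction hypothesis to the resulting smaller extremal digraph. You instead give a direct, self-contained parity argument in the style of the standard proof of Bondy's theorem: adjoining the empty set $\mathcal{A}_0=\emptyset$ turns domination-forcing into a singleton symmetric-difference condition of the same shape as location-forcing, the labelled graph $G^\ast$ on $n+1$ vertices is simple and the edges of each label form a matching, and a double-forced vertex would supply $n+1$ distinct edges, hence a cycle, around which telescoping $\mathcal{A}_{a_0}\ominus\mathcal{A}_{a_1},\ldots,\mathcal{A}_{a_{k-1}}\ominus\mathcal{A}_{a_0}$ forces every label to appear an even number of times --- impossible since only the label $v$ is available twice and its two edges are vertex-disjoint. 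All the steps check out: locatability gives distinct nonempty in-neighbourhoods so $\mathcal{A}_0$ is a new set; the two forcing reasons for $v$ do yield distinct edges in both cases of Definition~\ref{defforcedver}; and Proposition~\ref{prop:all-forced} supplies one edge per remaining label. Your argument has the advantage of being non-inductive and of not depending on the somewhat delicate case analysis of Lemma~\ref{lem-nonrededge}; it also makes transparent why Corollary~\ref{cor:one-dom-forced} and this proposition are really two faces of the same Bondy-type statement. What it does not provide is the structural information about forcing arcs that the paper's route develops along the way (and reuses in Theorem~\ref{thmmain}), so within the paper's architecture the two proofs serve somewhat different purposes.
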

\begin{proof}
	We prove this by induction on $n$. We can assume $D$ is connected, as it suffices to prove the claim for each connected component. If $n=1$, the only locatable digraph has a single vertex with a loop, for which the claim is clearly true. If $n=2$, one can check that there are three connected locatable digraphs of order~2 and in fact they all have OLD number~2 (see Figure~\ref{fig:order2}). For each of them the claim is true.
	
	Let $n\geq 3$ and assume the result is true for all digraphs $D$ of order $m<n$ with $\gamma_{OL}(D)=m$. 		
	Towards a contradiction, suppose that there is a digraph $D$ of order $n$ with $\gamma_{OL}(D)=n$, which contains a double-forced vertex.
	Among all such digraphs of order $n$, let $D=(V, A)$ be a digraph which has the smallest number of arcs.
	
	Let $z\in V$ be a double-forced vertex of $D$. By Corollary~\ref{cor:one-dom-forced}, there is a vertex $x$ in $D$ which is not location-forced, hence it is domination-forced. So, there is a vertex $y\in V$ with $N^-(y)=\{x\}$ and $xy$ is a forcing arc (possibly $x=y$ and the arc is a loop).
	Since $x$ is not location-forced, and there cannot be another vertex that has $x$ as its unique in-neighbour (otherwise it would be an in-twin of $y$, contradicting the fact that $D$ is locatable), we conclude that $xy$ is the unique forcing arc which has $x$ as its tail.
	
	Now, we claim that $N^+(x)=\{y\}$. Indeed, otherwise, we can delete all non-forcing arcs from $D$ which have $x$ as their tails, to obtain a new digraph $D'$; by Lemma~\ref{lem-nonrededge} applied to $D$ and $x$, which is not location-forced, we have $\gamma_{OL}(D')=n$ and the set of forcing arcs of $D$ and $D'$ are the same. 
	Thus, all double-forced vertices of $D$ remain double-forced in $D'$, in particular, there is at least one double-forced vertex in $D'$. But $D'$ has at least one arc less than $D$, which contradicts the minimality of $D$ in terms of the number of its arcs. (Moreover, if $D'$ is not connected, we contradict the induction hypothesis applied to a connected component of $D'$ containing a double-forced vertex.) 
	Therefore, we have $N^+(x)=\{y\}$ as claimed. As $n\geq 3$ and $D$ is connected, this implies that $x\neq y$.
	
	Now, let $D''$ be the digraph obtained from $D$ by contracting the arc $xy$. That is, we delete $x$ and $y$ and add a new vertex $v_{xy}$ that represents both $x$ and $y$. Then, for each arc whose head is $x$ or $y$ (except the arc $xy$), we add an arc from its tail to $v_{xy}$; similarly, for each arc whose tail is $x$ or $y$ (except the arc $xy$), we add an arc from $v_{xy}$ to its head. Then, every domination-forced vertex of $D$ (except $x$) remains domination-forced in $D''$. Moreover, every location-forced vertex of $D$ remains location-forced in $D''$ (note that $v_{xy}$ is domination-forced in $D''$ if $y$ was domination-forced in $D$, and is location-forced if $y$ was location-forced in $D$). Hence, $\gamma_{OL}(D'')=n-1$ and the vertex $z$ (or $v_{xy}$ if $z=y$) is double-forced in $D''$, which contradicts the induction hypothesis. Thus, $D$ does not exist, a contradiction which completes the proof.
\end{proof}

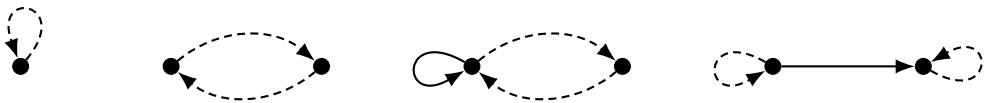
\begin{figure}[!htpb]
	\centering
	\begin{tikzpicture}
		
		\node[small node](x00) at (-2,0)    {};
		\path[thick,densely dashed,-{Latex[scale=1.1]}] (x00) edge[out=50,in=110,loop, min distance=10mm] node {} (x00);
		
		\node[small node](x1) at (0,0)    {};
		\node[small node](y1) at (2,0)    {};
		\node[small node](x2) at (4,0)    {};
		\node[small node](y2) at (6,0)    {};
		\node[small node](x3) at (8,0)    {};
		\node[small node](y3) at (10,0)    {};
		
		\path[thick,densely dashed,-{Latex[scale=1.1]}] (x1) edge[out=40,in=140,min distance=5mm] node {} (y1);
		\path[thick,densely dashed,-{Latex[scale=1.1]}] (y1) edge[out=220,in=320,min distance=5mm] node {} (x1);
		\path[thick,densely dashed,-{Latex[scale=1.1]}] (x2) edge[out=40,in=140,min distance=5mm] node {} (y2);
		\path[thick,densely dashed,-{Latex[scale=1.1]}] (y2) edge[out=220,in=320,min distance=5mm] node {} (x2);
		\path[thick,-{Latex[scale=1.1]}] (x2) edge[out=150,in=210,loop, min distance=10mm] node {} (x2);
		\path[thick,-{Latex[scale=1.1]}] (x3) edge node {} (y3);
		\path[thick,densely dashed,-{Latex[scale=1.1]}] (x3) edge[out=150,in=210,loop, min distance=10mm] node {} (x3);
		\path[thick,densely dashed,-{Latex[scale=1.1]}] (y3) edge[out=330,in=30,loop, min distance=10mm] node {} (y3);
		
	\end{tikzpicture}\centering
	\caption{The four connected locatable digraphs of order~1 and~2. Forcing arcs are dashed.}\label{fig:order2}\label{fig:order1,2}
\end{figure}

\subsection{Structural properties of extremal digraphs}

\begin{theorem}\label{thmmain}
	Let $D$ be a digraph of order $n$ and $D'$ be the subdigraph of $D$ induced by the forcing arcs of $D$. Then, $D$ is extremal 
	if and only if $D'$ is the disjoint union of directed cycles that spans the whole vertex set of $D$.
\end{theorem}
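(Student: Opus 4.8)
The plan is to prove the two implications separately, using throughout the recorded fact that a vertex is forced if and only if it is the tail of a forcing arc. The first step, common to both directions, is to pin down the out-degrees in $D'$. If $\gamma_{OL}(D)=n$, then by Proposition~\ref{prop:all-forced} every vertex is forced, hence the tail of at least one forcing arc, so every vertex has out-degree $\ge 1$ in $D'$. Moreover no vertex can have out-degree $\ge 2$: two distinct forcing arcs $xy_1,xy_2$ sharing the tail $x$ would either create in-twins $y_1,y_2$ (if both are domination-type, since then $N^-(y_1)=\{x\}=N^-(y_2)$), contradicting locatability, or would make $x$ double-forced (domination-type plus location-type, or two location-types witnessing location-forcing by the two distinct pairs), contradicting Proposition~\ref{propblueblue}. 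Thus every vertex has out-degree exactly $1$ in $D'$, so $D'$ is a functional digraph with exactly $n$ arcs. Since a functional digraph is a disjoint union of spanning directed cycles exactly when it has no vertex of in-degree $0$ (equivalently, when every in-degree is exactly $1$), the forward implication reduces to showing that every vertex of $D$ is the head of a forcing arc.

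For the converse, assume $D'$ is a disjoint union of directed cycles spanning $V(D)$. I would first verify that $D$ is locatable. Each vertex has in-degree $1$ in $D'$, hence in-degree $\ge 1$ in $D$, so there is no vertex of in-degree $0$; and if $u,w$ were in-twins, then the unique forcing arc $xu$ entering $u$ would, because $N^-(u)=N^-(w)$, also be a forcing arc $xw$, giving $x$ two out-going forcing arcs and contradicting that $D'$ is a union of cycles. Hence $V(D)$ is an OLD set. Finally, every vertex has out-degree $1$ in $D'$, so every vertex is the tail of a forcing arc and is therefore forced, i.e.\ belongs to every OLD set; consequently $V(D)$ is the only OLD set and $\gamma_{OL}(D)=n$.

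It remains to establish the crux of the forward direction: when $\gamma_{OL}(D)=n$, every vertex is the head of a forcing arc. The seed is that $D$ must contain a vertex with a unique in-neighbour. Indeed, otherwise no vertex would be domination-forced, so by Corollary~\ref{cor-notdomforc-locforc} all $n$ vertices would be location-forced, contradicting the bound of at most $n-1$ location-forced vertices from Corollary~\ref{cor:one-dom-forced}. Fix such a vertex $y$ with $N^-(y)=\{x\}$, giving a domination-forcing arc $x\to y$. Since $x$ is domination-forced it cannot also be location-forced (Proposition~\ref{propblueblue}), so Lemma~\ref{lem-nonrededge} lets me delete all non-forcing arcs leaving $x$ without changing $\gamma_{OL}$ or the set of forcing arcs; as $x$ has exactly one out-going forcing arc, this forces $N^+(x)=\{y\}$.

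I would then induct on $n$, contracting the arc $x\to y$ into a vertex $v_{xy}$ exactly as in the proof of Proposition~\ref{propblueblue}, so that the resulting digraph $D''$ of order $n-1$ satisfies $\gamma_{OL}(D'')=n-1$; the induction hypothesis gives that its forcing arcs form a spanning union of directed cycles. The step I expect to be the main obstacle is the lifting. Because $N^-(y)=\{x\}$ and $N^+(x)=\{y\}$, the in-arcs of $v_{xy}$ in $D''$ come from the in-arcs of $x$ in $D$ and the out-arcs of $v_{xy}$ come from the out-arcs of $y$, so a cycle $\cdots\to a\to v_{xy}\to b\to\cdots$ of $D''$ should lift to $\cdots\to a\to x\to y\to b\to\cdots$ in $D$. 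Making this precise requires a careful comparison of the forcing arcs of $D$ and $D''$ around $x$ and $y$: one must check that the forcing arc entering $v_{xy}$ corresponds to a forcing arc into $x$, that the forcing arc leaving $v_{xy}$ corresponds to a forcing arc out of $y$, and that the contraction neither creates nor destroys forcing arcs elsewhere. Once this correspondence is established, every vertex of $D$ is the head of exactly one forcing arc, and combined with the out-degree count this shows that $D'$ is a disjoint union of directed cycles spanning $V(D)$, completing the proof.
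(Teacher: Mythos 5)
Your out-degree analysis and your converse direction are sound and match the paper's, but the forward direction has a genuine gap exactly where you flag ``the main obstacle.'' You correctly reduce the problem to showing that every vertex of $D$ is the head of a forcing arc, but you never actually prove this: the contraction--induction you sketch stops at the point where one must verify that the forcing arcs of $D''$ lift back to forcing arcs of $D$ around $x$ and $y$, and that the contraction neither creates nor destroys forcing arcs elsewhere. That verification is the entire content of the claim, and it is not routine --- contracting $xy$ changes the in-neighbourhood of every out-neighbour of $y$, so one has to re-examine every location-forcing witness involving such vertices, and one also needs a base case. As written, the forward implication is an unproven plan, not a proof.

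The missing step has a much shorter resolution, which is the one the paper uses. Apply Lemma~\ref{lem-nonrededge} once for each vertex in turn: after all non-forcing arcs have been deleted, the resulting digraph is exactly $D'$, and the lemma guarantees at each stage that the digraph remains locatable with OLD number $n$. Locatability of $D'$ means in particular that no vertex of $D'$ has in-degree~$0$, i.e.\ every vertex is the head of a forcing arc. Combined with your out-degree count (out-degree exactly $1$ in $D'$, hence exactly $n$ arcs), the in-degrees sum to $n$ while each is at least $1$, so each equals $1$, and $D'$ is a spanning disjoint union of directed cycles. In short: you already cite Lemma~\ref{lem-nonrededge} to clean up the out-arcs of the single vertex $x$, but its real role in this theorem is that iterating it over all vertices hands you locatability of $D'$ for free, which is precisely the in-degree bound your argument is missing.
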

\begin{proof}	
	Assume $\gamma_{OL}(D)=n$. By repeated use of Lemma~\ref{lem-nonrededge}, we deduce that $D'$ is locatable and $\gamma_{OL}(D')=n$. Thus, each vertex of $D'$ is forced, has at least one in-neighbour, and at least one out-neighbour. In fact, by Proposition~\ref{propblueblue}, each vertex of $D'$ has exactly one out-neighbour. Thus, there is a total of $n$ arcs in $D'$, and so, every vertex of $D'$ has exactly one in-neighbour and one out-neighbour, and $D'$ is the disjoint union of directed cycles.
	
	Conversely, if $D'$ is the disjoint union of directed cycles, then $\gamma_{OL}(D')=n$. By Lemma~\ref{lem-nonrededge}, $D$ and $D'$ have the same set of forced vertices, thus $\gamma_{OL}(D)=n$.
\end{proof}

By Theorem~\ref{thmmain}, every vertex $v$ of a digraph $D$ of order $n$ with $\gamma_{OL}(D)=n$ has a unique outgoing and a unique incoming forcing arc (possibly they are the same if $v$ has a forcing loop).

\begin{definition}\label{def-f-f+}
	For a vertex $v$ of an extremal digraph $D$, we denote by $f^-(v)$ and $f^+(v)$ the unique in-neighbour and out-neighbour of $v$, respectively, corresponding to the two unique incoming and outgoing forcing arcs incident with $v$ (if $v$ has a forcing loop, we have $f^+(v)=f^-(v)=v$).
\end{definition}

Theorem~\ref{thmmain} implies that in an extremal digraph $D$, 
every vertex appears in a directed cycle, thus we get the following corollary.

\begin{corollary}
	Let $D$ be a digraph of order $n$ containing a source or a sink. Then, $\gamma_{OL}(D)\leq n-1$.
\end{corollary}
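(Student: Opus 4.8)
The plan is to prove the contrapositive: I would assume $\gamma_{OL}(D)=n$ and deduce that $D$ can contain neither a source (a vertex of in-degree~$0$) nor a sink (a vertex of out-degree~$0$). Since $V(D)$ is an OLD set of every locatable digraph, one has $\gamma_{OL}(D)\le n$ whenever $D$ is locatable; hence excluding the value $n$ gives exactly $\gamma_{OL}(D)\le n-1$, the desired bound.

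The heart of the argument is a one-line application of Theorem~\ref{thmmain}. Assuming $\gamma_{OL}(D)=n$, I would let $D'$ be the subgraph of $D$ induced by its forcing arcs; by Theorem~\ref{thmmain} this $D'$ is a disjoint union of directed cycles spanning all of $V(D)$. Thus every vertex $v$ lies on such a cycle and so has both an incoming forcing arc $f^-(v)v$ and an outgoing forcing arc $vf^+(v)$, as recorded in the definition following Theorem~\ref{thmmain}. In particular every vertex of $D$ has in-degree at least~$1$ and out-degree at least~$1$, already within $D'$ and a fortiori within $D$. A source would violate the former and a sink the latter, so neither can occur, which is precisely the contrapositive.

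I do not expect a genuine obstacle, as the statement is really a structural reading of Theorem~\ref{thmmain}; the only point demanding care is keeping the two relevant conventions straight. On the one hand, a sink can perfectly well appear in a locatable digraph (it needs only a nonempty in-neighbourhood distinct from every other), and here the substance of the claim is that its presence strictly lowers $\gamma_{OL}$ below $n$: a sink is the tail of no arc, hence of no forcing arc, hence belongs to no directed cycle of $D'$, contradicting the spanning property. On the other hand, a source has no in-neighbour at all, so the open neighbourhood domination condition can never be met and $D$ fails to be locatable; such a digraph is trivially not one with $\gamma_{OL}(D)=n$. The final write-up will simply need to phrase the conclusion so that the inequality $\gamma_{OL}(D)\le n-1$ is read correctly in both cases, after which the corollary follows from the single observation that every vertex of an extremal digraph has positive in- and out-degree.
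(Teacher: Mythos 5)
Your proof is correct and follows exactly the paper's route: the corollary is stated there as an immediate consequence of Theorem~\ref{thmmain}, since the forcing arcs form a spanning disjoint union of directed cycles, forcing every vertex to have positive in- and out-degree. Your additional care about the source case (where $D$ is not even locatable) is a reasonable clarification of a convention the paper leaves implicit, but the substance of the argument is the same.
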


We also get the following corollary.

\begin{corollary}\label{corlacatable}
	If each vertex of a digraph $D$ is forced, then $D$ is locatable.
\end{corollary}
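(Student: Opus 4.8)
The plan is to unfold the word ``locatable'': a digraph is locatable exactly when the map $v\mapsto N^-(v)$ takes pairwise distinct and nonempty values (equivalently, it has no in-twins and no vertex of in-degree~$0$), so I must show that if every vertex of $D$ is forced then neither obstruction occurs. First I would isolate where the real work lies. Since enlarging an OLD set keeps it an OLD set, a locatable digraph has a proper OLD subset if and only if some $V(D)\setminus\{v\}$ is one, and by Proposition~\ref{prop:all-forced} this happens precisely when some vertex is neither domination- nor location-forced. Thus for a digraph \emph{already known to be locatable}, ``every vertex is forced'' is equivalent to $\gamma_{OL}(D)=n$, and the only genuine content of the corollary is that the locatability hypothesis can be dropped: all-forced must force locatability.

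To prove this I would argue by contradiction with a minimal counterexample, following the scheme of Proposition~\ref{propblueblue}. Suppose the statement fails and let $D$ be a digraph on the fewest vertices (then the fewest arcs) in which every vertex is forced but which is not locatable; the small orders are checked directly as in Figure~\ref{fig:order1,2}. Even without locatability, the Bondy-type argument behind Corollary~\ref{cor:one-dom-forced} applies verbatim to the family of open in-neighbourhoods and produces a vertex that is not location-forced; as every vertex is forced, that vertex $x$ is domination-forced, so there is a vertex $y$ with $N^-(y)=\{x\}$ and $xy$ a forcing arc (the case $x=y$ of a forcing loop is an isolated-in-star situation handled separately, and for a connected $D$ with $n\ge 3$ one gets $x\ne y$). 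Just as in Proposition~\ref{propblueblue}, discarding the non-forcing out-arcs of $x$ (a Lemma~\ref{lem-nonrededge}-style cleaning) lets me assume $N^+(x)=\{y\}$, after which I contract the arc $xy$ into a single vertex $v_{xy}$ to obtain a digraph $D''$ on $n-1$ vertices.

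The engine of the induction is that contraction preserves forced-ness exactly as established in the proof of Proposition~\ref{propblueblue}: every domination-forced vertex other than $x$, and every location-forced vertex, stays forced in $D''$, so every vertex of $D''$ is again forced. By minimality $D''$ is therefore locatable, and the contradiction must come from showing that the non-locatability defect of $D$ survives the merge. An in-degree-$0$ vertex is the easy case, as it cannot be $y$ (whose only in-neighbour is $x$) and remains a source in $D''$, becoming $v_{xy}$ if it was $x$. I expect the pair of in-twins to be the main obstacle: one must verify that merging $x$ and $y$ does not accidentally separate a twin pair $u,w$ with $N^-(u)=N^-(w)$, checking in particular the boundary cases in which a twin coincides with $x$ or $y$ (here $N^-(y)=\{x\}$ pins down $y$'s behaviour under the merge) and ruling out that the contraction quietly repairs the only defect. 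Once the defect is shown to persist, $D''$ is a smaller all-forced non-locatable digraph, contradicting the minimality of $D$; hence no counterexample exists and every digraph all of whose vertices are forced is locatable.
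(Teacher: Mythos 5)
Your overall strategy (a minimal counterexample handled by contraction, replaying the proof of Proposition~\ref{propblueblue}) could probably be pushed through, but as written it has three genuine gaps, and the first two sit exactly at the point the corollary is about. First, Bondy's theorem (Theorem~\ref{thm:bondy}), and hence Corollary~\ref{cor:one-dom-forced}, requires the $n$ subsets to be \emph{distinct}; in a non-locatable digraph the open in-neighbourhoods are precisely \emph{not} all distinct when in-twins are present, so the argument does not apply ``verbatim''. It can be repaired (apply Bondy to the at most $n$ distinct in-neighbourhoods, padding the family if necessary), but since repeated in-neighbourhoods are exactly the defect you are trying to exclude, this has to be said explicitly or the step is circular. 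Second, Lemma~\ref{lem-nonrededge} is stated and proved only for locatable digraphs with $\gamma_{OL}(D)=n$ --- its proof begins by using locatability of $D$ to show $D'$ is locatable --- so a ``Lemma~\ref{lem-nonrededge}-style cleaning'' of your non-locatable minimal counterexample is not available off the shelf; you would need to reprove that deleting the non-forcing out-arcs of $x$ preserves both ``all vertices forced'' and ``not locatable''. (The latter does hold: if $N^-(u)=N^-(w)$ then any forcing arc $xu$ forces $xw$ as well, via the same witness, so the deletion cannot separate a twin pair --- but this is an argument you must make, not a citation.) Third, the decisive step, that the in-twin pair or in-degree-zero vertex survives the contraction of $xy$, is explicitly deferred (``one must verify\dots''); in this proof that verification is essentially the entire content, so what you have is a plan rather than a proof.

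For comparison, the paper derives the corollary in a few lines from results already in hand, with no new induction. From Theorem~\ref{thmmain} it gets that every vertex has an in-neighbour (every vertex lies on a forcing cycle, so it is the head of a forcing arc). Then, if $N^-(x)=N^-(y)$ with $x\neq y$, the forcing in-arc $tx$ of $x$ gives $t\in N^-(y)$; by Proposition~\ref{propblueblue} the arc $ty$ cannot also be forcing, which forces $t$ to be location-forced with a witness $z$ satisfying $N^-(x)\setminus N^-(z)=\{t\}$; but the same $z$ shows $N^-(y)\setminus N^-(z)=\{t\}$, making $ty$ forcing after all and $t$ double-forced --- a contradiction. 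If you keep your route, be aware that it amounts to re-proving Proposition~\ref{propblueblue} with extra bookkeeping, and that the three points above must be filled in.
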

\begin{proof}
	By Theorem~\ref{thmmain}, every vertex of $D$ has an in-neighbour. Assume by contradiction that $D$ contains two vertices $x$ and $y$ with the same in-neighbourhood. By Theorem~\ref{thmmain}, $x$ has a forcing incoming arc, $tx$. Thus, there is an arc $ty$ but by Proposition~\ref{propblueblue} $ty$ is not forcing. Hence, $t$ is not the only in-neighbour of $y$, and $x,y$ have at least two in-neighbours. Thus, $t$ is location-forced and there is a vertex $z$ with $N^-(x)\setminus N^-(z)=\{t\}$. But this implies $N^-(y)\setminus N^-(z)=\{t\}$ and the arc $ty$ should be forcing, contradicting Proposition~\ref{propblueblue}.
\end{proof}

\begin{definition}\label{def:H(D)}
	Given a digraph $D$, we define the digraph $\mathcal{H}(D)$ on vertex set $V(D)$, where $x$ has an arc to $y$ if and only if there exists a vertex $v$ of $D$ that is location-forced, with $N^-(x)=N^-(y)\setminus\{v\}$ (possibly, $v=y$, in which case $y$ has a forcing loop; if $v=x$, then $x$ has no loop but there is a forcing arc from $x$ to $y$ in $D$).    
\end{definition}

Such a construction was previously defined in~\cite{FP12} in the context of identifying codes. We will now give some properties of $\mathcal{H}(D)$ when $D$ is an extremal digraph.

\begin{theorem}\label{thm:H(D)}
	Let $D$ be an extremal digraph.
		Then, $\mathcal{H}(D)$ is the disjoint union of rooted directed trees, where for each root $r$, $f^-(r)$ is domination-forced in $D$ 
		(and thus $r$ has only one in-neighbour in $D$), and for each other vertex $v$, $f^-(v)$ is location-forced in $D$ (and thus, $v$ has an in-neighbour in $\mathcal{H}(D)$).

	\end{theorem}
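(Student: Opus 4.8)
The plan is to show that $\mathcal{H}(D)$ is a disjoint union of rooted directed trees by verifying the two structural properties that characterise such digraphs, namely that $\mathcal{H}(D)$ has no directed cycles and that every vertex has in-degree at most~$1$, and then to identify the roots and determine the forcing type of $f^-$ at each vertex. Throughout I would exploit the structure provided by Theorem~\ref{thmmain}: the forcing arcs of $D$ form vertex-disjoint spanning directed cycles, so $f^-$ and $f^+$ are well defined, each vertex is the head of a \emph{unique} forcing arc and the tail of a \emph{unique} forcing arc, and by Proposition~\ref{propblueblue} each vertex is forced for exactly one reason (either domination or location, never both).

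First I would establish the in-degree bound together with the key correspondence between $\mathcal{H}(D)$-arcs and forcing arcs of $D$. Suppose $xv$ is an arc of $\mathcal{H}(D)$, witnessed by a location-forced vertex $w$ with $N^-(x)=N^-(v)\setminus\{w\}$. Since $D$ is locatable, $x$ and $v$ have distinct in-neighbourhoods, so $w\in N^-(v)$ and $N^-(v)\setminus N^-(x)=\{w\}$; by Definition~\ref{def:red-edges} (taking $z=x$) this means that $wv$ is a \emph{forcing} arc of $D$. As $v$ has a unique incoming forcing arc by Theorem~\ref{thmmain}, we must have $w=f^-(v)$, so the witness of any incoming $\mathcal{H}(D)$-arc at $v$ is forced to be $f^-(v)$, and in particular $f^-(v)$ is location-forced. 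The remaining vertex $x$ is then pinned down by $N^-(x)=N^-(v)\setminus\{f^-(v)\}$, and since $D$ has no in-twins at most one vertex realises this in-neighbourhood. Hence every vertex of $\mathcal{H}(D)$ has in-degree at most~$1$, and whenever $v$ has an incoming arc, $f^-(v)$ is location-forced.

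For acyclicity I would observe that every arc $xv$ of $\mathcal{H}(D)$ satisfies $|N^-(x)|=|N^-(v)|-1$, so the quantity $|N^-(\cdot)|$ strictly increases along any directed walk; this rules out directed cycles (and, in particular, loops and directed $2$-cycles). Combining acyclicity with the in-degree bound, a weakly connected component on $n_c$ vertices cannot have $n_c$ arcs, since that would force every in-degree to equal~$1$ and hence produce a directed cycle; being connected it therefore has exactly $n_c-1$ arcs, so its underlying graph is a tree with a unique vertex of in-degree~$0$ from which all arcs point away. This is exactly the assertion that $\mathcal{H}(D)$ is a disjoint union of rooted directed trees, whose roots are precisely the vertices of in-degree~$0$.

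It remains to characterise the roots, and this is where the main obstacle lies. The previous paragraphs already give that a vertex with an incoming arc has $f^-$ location-forced; the hard direction is the converse: if $f^-(v)$ is location-forced, then $v$ really does receive an arc in $\mathcal{H}(D)$. The difficulty is that location-forcedness of $u:=f^-(v)$ a priori only supplies \emph{some} pair $a\neq b$ with $N^-(a)\ominus N^-(b)=\{u\}$, with no obvious link to $v$. I would resolve this once more via the uniqueness of forcing out-arcs: relabelling so that $u\in N^-(a)\setminus N^-(b)$, we get $N^-(a)\setminus N^-(b)=\{u\}$, so $ua$ is a forcing arc of $D$; since $u$ has a unique outgoing forcing arc, which is $uv$ (because $f^+(u)=v$), we conclude $a=v$. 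Then $N^-(b)=N^-(v)\setminus\{u\}$ exhibits the arc $bv$ of $\mathcal{H}(D)$, so $v$ is not a root. Contrapositively, every root $r$ has $f^-(r)$ not location-forced, hence domination-forced by Corollary~\ref{cor-notdomforc-locforc}; writing $N^-(w)=\{f^-(r)\}$, the arc $f^-(r)w$ is a (domination-)forcing arc, so uniqueness of the outgoing forcing arc forces $w=f^+(f^-(r))=r$, whence $N^-(r)=\{f^-(r)\}$ and $r$ has a single in-neighbour in $D$. Assembling these facts yields exactly the claimed description of roots and non-roots.
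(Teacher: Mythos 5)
Your proof is correct and follows essentially the same route as the paper: acyclicity of $\mathcal{H}(D)$ via the strict growth of $|N^-(\cdot)|$ along its arcs, in-degree at most one via the absence of in-twins, and identification of the witness of any $\mathcal{H}(D)$-arc into $v$ with $f^-(v)$. Your final paragraph, showing that if $f^-(v)$ is location-forced then $v$ genuinely receives an arc in $\mathcal{H}(D)$ (via uniqueness of the outgoing forcing arc at $f^-(v)$), makes explicit a converse direction that the paper's one-sentence conclusion leaves implicit, so your treatment of the root characterisation is in fact slightly more complete than the published proof.
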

	\begin{proof}
		Since an arc $xy$ in $\mathcal{H}(D)$ implies that the in-neighbourhood of $x$ is strictly smaller than that of $y$, it is clear that $\mathcal{H}(D)$ is acyclic. Moreover, if some vertex $x$ has two in-neighbours $y,z$ in $\mathcal{H}(D)$, since $f^-(x)$ is unique and by the definition of $\mathcal{H}(D)$, then we would have that $N^-(y)=N^-(x)\setminus\{f^-(x)\}=N^-(z)$, and thus $y,z$ would be in-twins, contradicting the fact that $D$ is locatable. 
		Thus, $\mathcal{H}(D)$ is acyclic and each vertex has at most one in-neighbour, hence $\mathcal{H}(D)$ is the disjoint union of rooted directed trees as claimed.
		
		By Theorem~\ref{thmmain}, every vertex $v$ of $D$ has an incoming forcing arc from $f^-(v)$. By the definition of $\mathcal{H}(D)$, if $v$ is not a root of a tree of $\mathcal{H}(D)$, $f^-(v)$ is location-forced. If $r$ is a root of a tree of $\mathcal{H}(D)$, then by the definition of $\mathcal{H}(D)$, $f^-(r)$ is not location-forced, and since $D$ is extremal by Corollary \ref{cor-notdomforc-locforc}, $f^-(r)$ is domination-forced.
		
		By the definition of $\mathcal{H}(D)$, each vertex $v$ with an in-neighbour in $\mathcal{H}(D)$ has an incoming forcing arc $wv$ where $w=f^-(v)$ is location-forced. This completes the proof.
	\end{proof}

	Using Theorem~\ref{thmmain} and Theorem~\ref{thm:H(D)}, one can show how all extremal digraphs can be built, as follows.
	
	\begin{theorem}\label{thm:construct}
		For any locatable digraph $D$ of order $n$, we have $\gamma_{OL}(D)=n$ if and only if $D$ can be constructed as follows.
		\begin{enumerate}
			\item First, choose a decomposition of $n$ as a sum of positive integers $n_1,\ldots n_k$, corresponding to the orders of the directed cycles $C_1,\ldots,C_k$ consisting of all forced arcs of $D$, and create the corresponding cycles.
			
			\item Next, choose a partition of $V(D)$ into a set $V_d$ of domination-forced vertices and a set $V_l$ of location-forced vertices, with $|V_d|\geq 1$.
			
			\item Then, construct $\mathcal{H}(D)$ as a collection of vertex-disjoint rooted directed trees (note that such a tree may consist of a single vertex), as follows. The roots of the trees are precisely the out-neighbours of the vertices in $V_d$. Moreover, for any vertex $x$ of $V_l$, its out-neighbour $f^+(x)$ has an in-neighbour in $\mathcal{H}(D)$.
			
			\item Finally, for each rooted directed tree $T$ of $\mathcal{H}(D)$ and every vertex $v$ of $T$, we create an arc from $f^-(v)$ to all descendants of $v$ in $T$.
		\end{enumerate}	
	\end{theorem}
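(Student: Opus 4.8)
The plan is to prove both implications by assembling the structural results already established, with the crux being a single ``telescoping'' identity that expresses every in-neighbourhood of $D$ in terms of the forcing map $f^-$ and the tree order of $\mathcal{H}(D)$. Concretely, along any directed path $r=v_0\to v_1\to\cdots\to v_m=u$ from a root $r$ of a tree of $\mathcal{H}(D)$ to a vertex $u$, the defining relation of $\mathcal{H}(D)$ (Definition~\ref{def:H(D)}) gives $N^-(v_i)=N^-(v_{i-1})\cup\{f^-(v_i)\}$ with $f^-(v_i)\notin N^-(v_{i-1})$; since a root $r$ satisfies $N^-(r)=\{f^-(r)\}$ by Theorem~\ref{thm:H(D)}, telescoping yields
\[
N^-(u)=\{\, f^-(w) : w \text{ is an ancestor of } u \text{ in } \mathcal{H}(D)\,\},
\]
where ``ancestor'' includes $u$ itself. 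This identity is what makes Step~4 correct, and it will also drive the converse.

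For the forward direction, I would assume $\gamma_{OL}(D)=n$. Step~1 is exactly Theorem~\ref{thmmain}: the forcing arcs form a spanning disjoint union of directed cycles $C_1,\ldots,C_k$, which fixes the decomposition $n=n_1+\cdots+n_k$ and the maps $f^-,f^+$. Step~2 follows from Proposition~\ref{prop:all-forced} together with Proposition~\ref{propblueblue}: every vertex is forced and none is double-forced, so $V(D)$ splits cleanly into $V_d$ and $V_l$. Step~3 is Theorem~\ref{thm:H(D)}: $\mathcal{H}(D)$ is a disjoint union of rooted directed trees, and its roots are precisely the vertices $r$ with $f^-(r)$ domination-forced, i.e. $r=f^+(x)$ for $x\in V_d$; dually, for $x\in V_l$ the vertex $f^+(x)$ has $f^-(f^+(x))=x$ location-forced, so it is a non-root and has an in-neighbour in $\mathcal{H}(D)$. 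Finally, the telescoping identity shows that $x$ is an in-neighbour of $u$ in $D$ exactly when $x=f^-(w)$ for some ancestor $w$ of $u$; since the forcing (cycle) arc already supplies $f^-(u)\to u$, the remaining arcs of $D$ are precisely those from $f^-(v)$ to each proper descendant $u$ of $v$, which is Step~4.

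For the converse, I would take a digraph $D$ produced by the construction; the cycles of Step~1 give a spanning disjoint union of directed cycles, and Step~4 (together with the cycle arcs, which supply the $w=u$ term) determines all in-neighbourhoods, so the telescoping identity holds by construction. I first check that every cycle arc $f^-(v)\to v$ is forcing: if $v$ is a root then $N^-(v)=\{f^-(v)\}$ and the arc is domination-forcing, while if $v$ has parent $p$ in $\mathcal{H}(D)$ then the identity gives $N^-(v)\setminus N^-(p)=\{f^-(v)\}$, so the arc is location-forcing with witness $p$. It then remains to show that no other arc is forcing, i.e. that each ``long'' arc $f^-(v)\to u$ with $u$ a proper descendant of $v$ is non-forcing; this is the step I expect to be the main obstacle. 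Here I would argue directly from the identity: $|N^-(u)|\geq 2$ rules out domination-forcing, and for location-forcing one would need a vertex $z$ with $N^-(u)\setminus N^-(z)=\{f^-(v)\}$. Using injectivity of $f^-$, the set $N^-(u)\setminus N^-(z)$ equals $\{f^-(w) : m\prec w\preceq u\}$ where $m$ is the lowest common ancestor of $u$ and $z$ (and the difference is all of $N^-(u)$ if $u,z$ lie in different trees); a short case analysis shows this difference is either empty or contains $f^-(u)$, and so can never equal the singleton $\{f^-(v)\}$ with $v$ a proper ancestor of $u$. Hence the forcing arcs of $D$ are exactly the cycle arcs, and Theorem~\ref{thmmain} gives $\gamma_{OL}(D)=n$, completing the proof.
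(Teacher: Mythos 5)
Your proof is correct and follows the same overall strategy as the paper, which is to derive the forward direction from Theorems~\ref{thmmain} and~\ref{thm:H(D)} and to verify for the converse that every vertex of the constructed digraph is forced. The paper's own proof, however, is only a two-sentence sketch that asserts these facts, whereas you supply the substance: the telescoping identity $N^-(u)=\{f^-(w): w \text{ an ancestor of } u \text{ in } \mathcal{H}(D)\}$, which makes precise why Step~4 reproduces exactly the in-neighbourhoods of $D$, and, in the converse, the case analysis (via injectivity of $f^-$ and lowest common ancestors) showing that the ``long'' arcs $f^-(v)\to u$ are never forcing, so that the cycles $C_1,\ldots,C_k$ really do consist of \emph{all} forcing arcs of the constructed digraph, as the statement requires. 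This last verification is genuinely absent from the paper and is the nontrivial part of the equivalence; the only small point you leave implicit is that the constructed digraph is locatable, but that follows immediately from your identity (distinct vertices have distinct ancestor sets, hence distinct in-neighbourhoods, and every in-neighbourhood is nonempty), or alternatively from Corollary~\ref{corlacatable} once all cycle arcs are known to be forcing.
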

	\begin{proof}
		Assume that $D$ is extremal. By Theorem~\ref{thmmain}, the subdigraph $D'$ of $D$ induced by the forcing arcs of $D$ is the disjoint union of directed cycles that spans the whole vertex set of $D$. This corresponds to the first step of the construction. Every vertex is forced, and by Proposition~\ref{propblueblue}, no vertex is double-forced. Thus, there is a partition of $V(D)$ into the set $V_d$ of domination-forced vertices and the set $V_l$ of location-forced vertices. This is Step~2 of the construction. Moreover, by Corollary~\ref{cor:one-dom-forced}, $|V_d|\geq 1$. By Theorem~\ref{thm:H(D)}, the digraph $\mathcal{H}(D)$ is a collection of vertex-disjoint rooted directed trees where the roots of the trees are precisely the out-neighbours of the vertices in $V_d$. Moreover, for any vertex $x$ of $V_l$, its out-neighbour $f^+(x)$ has an in-neighbour in $\mathcal{H}(D)$ (i.e. it is not a root of a tree of $\mathcal{H}(D)$). This corresponds to Step~3 of the construction. Now, the arcs of $D$ comply with the definition of $\mathcal{H}(D)$: for any vertex $x$ of a tree $T$ in $\mathcal{H}(D)$, the in-neighbourhood in $D$ of each descendant of $x$ in $T$ contains the in-neighborhood of $x$ in $D$, and moreover, for any arc $xy$ of $\mathcal{H}(D)$, $N^-_D(x)=N^-_D(y)\setminus\{f^-(y)\}$. Since the root $r$ of $T$ has only $f^-(r)$ as an in-neighbor, there are no further arcs in $D$ incoming towards a vertex of $T$ (otherwise there would be a similar arc towards the root $r$, contradicting the fact that it has only one in-neighbour). Thus, there are in fact no more arcs in $D$ than the ones following the structure of $\mathcal{H}(D)$, and thus Step~4 completes the description of $D$.

		Conversely, if $D$ is constructed in this way, the digraph is clearly locatable and each vertex is forced, and thus $D$ is indeed extremal.
	\end{proof}
	
	An example of the construction of Theorem~\ref{thm:construct} is depicted in Figure~\ref{fig:construct}. Figure~\ref{fig:construct}(a) shows the choice of the directed cycles formed by the forcing arcs (two cycles $(1,3,2)$ and $(4)$) as well as the partition into $V_d=\{1\}$ and $V_f=\{2,3,4\}$). Figure~\ref{fig:construct}(b) shows the set of rooted diected trees $\mathcal{H}(D)$ (in this case, it consists of a single tree $T$ rooted at vertex $3$, which is the out-neighbour of vertex $1$, the only vertex in $V_d$). (Note that $\mathcal{H}(D)$ is not a subdigraph of $D$.) Finally, Figure~\ref{fig:construct}(c) shows the resulting extremal digraph obtained by adding to the set of directed cycles, for each vertex $v$ in $T$, an arc from $f^-(v)$ to all descendants of $v$ in $T$. That is, we add arcs from $1=f^-(3)$ to $1$, $2$ and $4$ and from $3=f^-(2)$ to $1$ (and no further arcs since $1$ and $4$ have no descendants in $T$).

	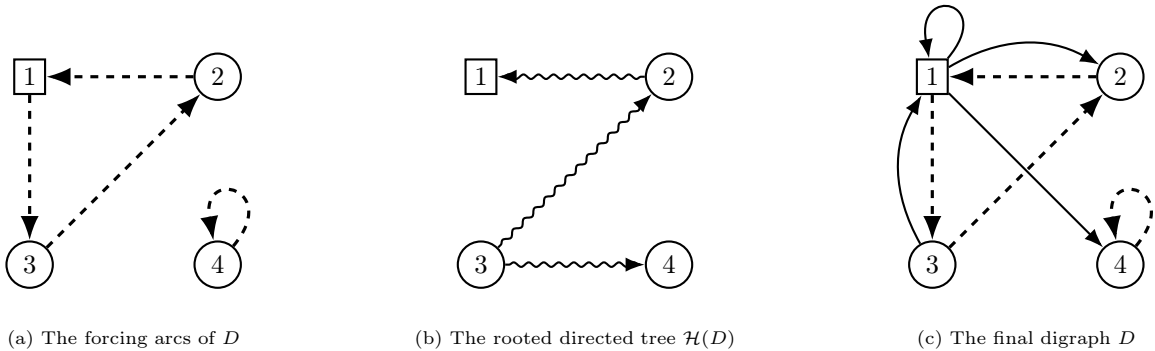
\begin{figure}[!htpb]
		\centering
		\begin{tikzpicture}
			[
			-{Latex[scale=1.1]},
			node distance = 2cm, 
			thick 
			]
			
			\tikzset{every state}=[draw = black, thick, fill = white,minimum size = 1mm]
			\node[shape=rectangle,draw=black] (1) at (0,2.5) {1};
			\node[shape=circle,draw=black] (3) at (0,0) {3};
			\node[shape=circle,draw=black] (2) at (2.5,2.5) {2};
			\node[shape=circle,draw=black] (0) at (2.5,0) {4};
			
			\node at (1.25,-1) {{\scriptsize (a) The forcing arcs of $D$}};
			
			\path [draw,very thick, dashed, color=black](2) edge node[left] {} (1);
			\path [draw,very thick, dashed, color=black](1) edge node[left] {} (3);
			\path [draw,very thick, dashed, color=black](3) edge node[left] {} (2);
			\path[draw,very thick,dashed] (0) edge[out=50,in=100,loop, min distance=10mm] node {} (0);
			
			\node[shape=rectangle,draw=black] (4) at (6,2.5) {1};
			\node[shape=circle,draw=black] (6) at (6,0) {3};
			\node[shape=circle,draw=black] (5) at (8.5,2.5) {2};
			\node[shape=circle,draw=black] (7) at (8.5,0) {4};
			
			\node at (7.25,-1) {{\scriptsize (b) The rooted directed tree $\mathcal{H}(D)$}};

			\tikzset{decoration={snake,amplitude=.3mm,segment length=2mm,
					post length=0.4mm,pre length=0.5mm}}
			\draw[decorate] (5) -- (4);
			\draw[decorate] (6) -- (5);
			\draw[decorate] (6) -- (7);
			
			\node[shape=rectangle,draw=black] (8) at (12,2.5) {1};
			\node[shape=circle,draw=black] (9) at (12,0) {3};
			\node[shape=circle,draw=black] (10) at (14.5,2.5) {2};
			\node[shape=circle,draw=black] (11) at (14.5,0) {4};
			
			\node at (13.25,-1) {{\scriptsize (c) The final digraph $D$}};
			
			\path [draw,very thick, dashed, color=black](10) edge node[left] {} (8);
			\path [draw,very thick, dashed, color=black](8) edge node[left] {} (9);
			\path [draw,very thick, dashed, color=black](9) edge node[left] {} (10);
			
			\path[draw,thick] (8) edge[bend left] (10);
			\path [draw, color=black](8) edge node[bend left] {} (11);
			\path[draw,thick] (9) edge[bend left] (8);
			\path[draw,thick] (8) edge[out=50,in=100,loop, min distance=10mm] node {} (8);
			\path[draw,very thick,dashed] (11) edge[out=50,in=100,loop, min distance=10mm] node {} (11);
		\end{tikzpicture}
		\caption{An example of the construction from Theorem~\ref{thm:construct}. The square vertex is the only one in $V_d$; the circled vertices are those in $V_l$; the dashed arcs are the forcing arcs; the wriggled arcs are those of $\mathcal H(D)$.}
		\label{fig:construct}
	\end{figure}

	\section{New proofs of known results}\label{sec:known}

	In this section, we show that the contents of the previous section enable us to give new proofs for already known results.
	
	\subsection{A new proof of Theorem~\ref{thm:IDcodes-n-1}}
	
	Recall the statement of Theorem~\ref{thm:IDcodes-n-1}.
	
	\begin{theorem*}[Theorem~\ref{thm:IDcodes-n-1}]
		For a connected, symmetric and reflexive locatable digraph $D$ of order $n$, $\gamma_{OL}(D)=n$ if and only if $n=1$.
	\end{theorem*}
	
	We note that Corollary~\ref{cor:one-dom-forced}, which is directly derived from Theorem~\ref{thm:bondy} by Bondy~\cite{B72}, in fact implies Theorem~\ref{thm:IDcodes-n-1} (this gives a different proof than the one from~\cite{GM07}).
	
	\begin{proof}[Proof of Theorem~\ref{thm:IDcodes-n-1}]
		Let $D$ be a connected, reflexive, symmetric and locatable digraph of order $n$. If $\gamma_{OL}(D)=n$, by Proposition~\ref{prop:all-forced}, every vertex of $D$ is either location-forced or domination-forced. By Corollary~\ref{cor:one-dom-forced}, $D$ has at most $n-1$ location-forced vertices, and so, it has at least one domination-forced vertex. However, since $D$ is reflexive and symmetric, a domination-forced vertex of $D$ is necessarily a vertex with no neighbours other than itself. Since $D$ is connected, we must have $n=1$.
	\end{proof}

	\subsection{A new proof of Theorem~\ref{thm:IDcodes-oriented}}
	
	Our tools can be used to give a new proof of Theorem~\ref{thm:IDcodes-oriented} from~\cite{FNP13} (the original proof uses induction), whose statement we recall below.
	
	\begin{theorem*}[Theorem~\ref{thm:IDcodes-oriented}]
		For a connected and reflexive locatable digraph $D$ of order $n$ without directed 2-cycles, $\gamma_{OL}(D)=n$ if and only if the digraph obtained from $D$ by removing all loops is the transitive closure of a rooted directed tree.
	\end{theorem*}

	\begin{proof}[Proof of Theorem~\ref{thm:IDcodes-oriented}]
		It is not difficult to see that if $D$ is obtained from the transitive closure of a rooted directed tree by adding a loop to each vertex, then $\gamma_{OL}(D)=n$ as the root of the tree is domination-forced, and each vertex is location-forced to locate itself from its parent in the tree.
		
		For the other direction, let $D$ be a connected reflexive locatable digraph of order $n$ with no directed 2-cycle, and assume that $\gamma_{OL}(D)=n$.
		
		First of all, we claim that the forcing arcs in $D$ are exactly its loops. Assume by contradiction that it is not the case, and there is a forcing arc from $x$ to $y$ with $x\neq y$. Then, there is a vertex $z$ such that $N^-(y)\setminus N^-(z)=\{x\}$ (thus, $z\notin\{x,y\}$ since $x$ is an in-neighbour of both $x$ and $y$ since there is a loop at $x$). Since there is a loop at both $y$ and $z$, there is an arc from $y$ to $z$ and vice-versa, contradicting the fact that there is no directed 2-cycle in $D$. Thus, each forcing arc is a loop, and by Theorem~\ref{thmmain}, the set of forcing arcs of $D$ is exactly its set of loops.
		
		Now, consider the digraph $\mathcal{H}(D)$ from Definition~\ref{def:H(D)}. By Theorem~\ref{thm:H(D)}, it consists of a disjoint union of rooted directed trees. Since every vertex is dominated by itself through its loop, every domination-forced vertex is the root of one of the directed trees of $\mathcal{H}(D)$. Consider a location-forced vertex $x$ of $D$, and assume its in-neighbour in $\mathcal{H}(D)$ is $y$. By the previous paragraph we have $f^-(x)=x$ and thus, since $y$ has a loop, we must have the arc $yx$ in $D$ as well. Thus, $\mathcal{H}(D)$ is in fact a subdigraph of $D$. Moreover, for any two vertices $x,y$ in the same rooted directed tree of $\mathcal{H}(D)$, where $x$ is a descendant of $y$, we have the arc $yx$ in $D$.

		Moreover, we claim that there is a unique tree in $\mathcal{H}(D)$. For a contradiction, suppose there are at least two of them (each of which has a domination-forced vertex as its root). Recall that $\mathcal{H}(D)$ is a subdigraph of $D$. Since $D$ is connected, there must be two trees $T_1$ and $T_2$ with an arc say, from a vertex $x_1$ of $T_1$ to a vertex $x_2$ of $T_2$. But then, since the in-neighborhoods of vertices of $T_2$ only differ by vertices inside $T_2$, $x_1$ must be an in-neighbour of all vertices of $T_2$ (including the root of $T_2$), and thus the root of $T_2$ is in fact not domination-forced, a contradiction.
		
		This shows that $D$ is obtained from the transitive closure of a rooted directed tree by adding a loop to each vertex, as claimed.
	\end{proof}
	
	\subsection{A new proof of Theorem~\ref{thm:half-graphs}}
	We next give a new proof using our structural theorems, that for every connected locatable symmetric and loop-free digraph of order $n$ with $\gamma_{OL}(D)=n$, the underlying graph of $D$ is a half-graph (see below). We recall the definition of a half-graph: for any integer $k\geq 1$, the half-graph $H_k$ is the undirected bipartite graph on vertex sets $\{v_1,\ldots,v_k\}$ and $\{w_1,\ldots,w_k\}$, with an edge between $v_i$ and $w_j$ if and only if $i\leq j$.
	
	\begin{theorem*}[{Theorem~\ref{thm:half-graphs}}]
		For a connected, symmetric and loop-free locatable digraph $D$ of order $n$, $\gamma_{OL}(D)=n$ if and only if the underlying graph of $D$ is a half-graph.
	\end{theorem*}

	\begin{proof}[Proof of Theorem~\ref{thm:half-graphs}]
		Assume that $D$ is a connected, locatable, loop-free and symmetric digraph of order $n$ with $\gamma_{OL}(D)=n$. 
		By Theorem~\ref{thmmain}, we know that the set of forcing arcs of $D$ induces a disjoint union of directed cycles.

		First we show that all these directed cycles are, in fact, 2-cycles. Towards a contradiction, assume this is not the case, and let $C$ be a directed cycle of forcing arcs of length other than~2. Since $D$ is loop-free, there are no forcing loops, and so, $C$ has length at least~3. Let $c_1,c_2,\ldots,c_k$ be the vertices of $C$, ordered along the natural orientation of $C$. Since $D$ is symmetric, each vertex of $C$ has at least two in-neighbours, thus no vertex of $C$ is domination-forced, and hence they are all location-forced. Thus, for each vertex $c_i$ of $C$, there is a vertex $c'_i$ such that $N^-(c_i)\setminus\{c_{i-1}\}=N^-(c'_i)$. Consider $i=2$. Since $D$ is symmetric, we have $c_{3}$ which has an arc to $c_2$, and thus, there are symmetric arcs between $c_{3}$ and $c'_2$ as well. Thus, since $N^-(c_{3})\setminus\{c_{2}\}=N^-(c'_{3})$, there must also exist symmetric arcs between $c'_2$ and $c'_{3}$. However, since $N^-(c_2)\setminus\{c_{1}\}=N^-(c'_2)$ and $c_{1}\neq c_{3}$, there must be symmetric arcs between $c_2$ and $c'_{3}$. But this contradicts the fact that $N^-(c_{3})\setminus\{c_{2}\}=N^-(c'_{3})$, and proves the claim that all forced-cycles are 2-cycles.
		
		By Corollary~\ref{cor:one-dom-forced}, we know that $D$ contains at least one domination-forced vertex. Let $v_1$ be a domination-forced vertex of $D$ and $v_1=f^-(u_1)$. Since every forced-cycle of $D$ is of length~$2$, we conclude that $u_1=f^-(v_1)$. Now, $d^-(u_1)=1$, since $v_1$ is domination-forced. Since $D$ is symmetric, we have $d^+(u_1)=1$. If $u_1$ is also domination-forced, since $D$ is connected, then $D$ is of order~$2$ and its underlying graph is the half-graph of order~2. Otherwise, $u_1$ is location-forced and thus there is a vertex $v_2$ such that $N^-(v_1)\setminus N^-(v_2)=\{u_1\}$.
		
		Now, let $u_2=f^-(v_2)$, and again, since the forced cycles of $D$ are all 2-cycles, we also have $v_2=f^-(u_2)$. Since $N^-(v_1)\setminus N^-(v_2)=\{u_1\}$, we also have the arc $u_2v_1$ and (since $D$ is symmetric) the arc $v_1u_2$. 
		
		If $u_2$ is domination-forced, then $v_1$ and $v_2$ have no additional in-neighbours. Thus, the only in-neighbour of $v_2$ is $u_2$, which has at least two in-neighbours, and thus, $v_2$ cannot be domination-forced. Thus, $v_2$ is location-forced, and since $f^-(u_2)=v_2$, there is a vertex $u_3$ such that $N^-(u_2)\setminus N^-(u_3)=\{v_2\}$. Thus, $u_3$ must have $v_1$ as an in-neighbour, and in fact we have $u_1=u_3$ and there are no other vertices in $D$. Now, we are done since the underlying graph of $D$ is a half-graph of order~4.
		
		Otherwise, $u_2$ is location-forced, and since $u_2=f^-(v_2)$, there is a vertex $v_3$ such that $N^-(v_2)\setminus N^-(v_3)=\{u_2\}$. We can continue this process, building disjoint pairs of vertices $(u_i,v_i)$ forming the forcing 2-cycles of $D$, where $u_i$ has an outgoing arc and an incoming arc to and from each vertex $v_j$, with $j\leq i$. This goes on until we reach a domination-forced vertex $u_k$. Then, the process stops. The vertex set of th obtained graph is $\{u_1,\ldots,u_k\}\cup\{v_1,\ldots,v_k\}$, and there are two symmetric arcs between $u_i$ and $v_j$ if and only if $i\leq j$. Thus, the underlying graph of $D$ is precisely a half-graph of order $2k$, which completes the proof.
	\end{proof}

	\section{A recursive and constructive characterization of extremal di-trees}\label{sec:trees}
	
	In this section, we characterize extremal di-trees, that is, (connected) extremal digraphs whose underlying graph is a tree. We are going to give a recursive construction for all of these digraphs. This characterization is more precise than the one from the more general Theorem~\ref{thm:construct} that holds for all extremal digraphs, and in particular, it enables us to easily construct all extremal di-trees of order $n$ from the ones of orders $n-2$ and $n-1$ using simple operations.

	We start with the following definitions.
	
	\begin{definition} \label{def-ditree}
		For a positive integer $n$, we define $\mathcal{T}_n$ as the set of extremal di-trees, that is, all locatable di-trees 
		$D$ of order $n$ with $\gamma_{OL}(D)=n$. 
	\end{definition}
	
	We note that when $D$ is di-tree, then every forcing cycle of $D$ is either of length $2$ or of length $1$. This implies that for every vertex $v$ of $D$, $f^-(v)=f^+(v)$ and $f^-(f^-(v))=v$.

	For an undirected graph $G$, we say that an induced path with vertices $u_0, u_1, \ldots , u_n$ of $G$ is
	a {\em pendant path} of length $n$ of $G$, if
	$deg(u_1)= \ldots = deg(u_{n-1})=2$ and $deg(u_n)=1$
	(note there is no requirement on the degree of $u_0$). 
	
	\begin{lemma}\label{lemtd-<2}
		Let $D\in \mathcal{T}_n$ and $T$ be the underlying tree of $D$. Then for each vertex $v$ of $D$, $d^-(v)\leq 2$.
	\end{lemma}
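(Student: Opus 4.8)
The plan is to argue by contradiction: assuming some vertex $v$ has $d^-(v)\geq 3$, I will exhibit a cycle in the underlying tree $T$, which is impossible. Everything is driven by the forcing structure of Theorem~\ref{thmmain}: the forcing arcs form a disjoint union of directed cycles spanning $V(D)$, and since $T$ is acyclic, a forcing cycle of length $\geq 3$ would project onto a cycle of $T$. Hence every forcing cycle is either a loop or a directed $2$-cycle; in particular every vertex $x$ has a unique incoming forcing arc, whose tail I denote $f^-(x)$.

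First I would produce a vertex $z$ that shares almost all of its in-neighbours with $v$. Since $d^-(v)\geq 3>1$, the incoming forcing arc from $f^-(v)$ to $v$ cannot be domination-forcing (that would mean $N^-(v)=\{f^-(v)\}$), so by Definition~\ref{def:red-edges} it is location-forcing: there is a vertex $z$ with $N^-(v)\setminus N^-(z)=\{f^-(v)\}$, that is, $N^-(z)=N^-(v)\setminus\{f^-(v)\}$ with $z\neq v$. Thus $|N^-(z)|=d^-(v)-1\geq 2$ and $N^-(z)\subseteq N^-(v)$, so every in-neighbour of $z$ is also an in-neighbour of $v$. (This is exactly the step provided by Corollary~\ref{cor-notdomforc-locforc} applied through the forcing structure.)

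The heart of the argument is then a tree-counting step. Any $x\in N^-(z)$ with $x\notin\{v,z\}$ is the tail of arcs $xv$ and $xz$ with $x\neq v$ and $x\neq z$, so $xv$ and $xz$ are edges of $T$ and $x$ is a common neighbour of $v$ and $z$ sitting on a path of length two between them. Because $T$ is a tree, two distinct such vertices $x,x'$ would give the $4$-cycle $v\,x\,z\,x'\,v$; hence there is at most one, i.e. $|N^-(z)\setminus\{v,z\}|\leq 1$. Since $|N^-(z)|\geq 2$, this already closes the proof whenever $v\notin N^-(z)$ and $z\notin N^-(z)$: in that case $N^-(z)\setminus\{v,z\}=N^-(z)$ has size $\geq 2>1$, a contradiction. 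In particular, when $D$ has no loops this is immediate, because then neither the loop $z\in N^-(z)$ nor (as $N^-(z)\subseteq N^-(v)$ and $v\notin N^-(v)$) the membership $v\in N^-(z)$ can hold.

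The step I expect to be the main obstacle is precisely the bookkeeping of the two degenerate memberships $z\in N^-(z)$ (a loop at $z$) and $v\in N^-(z)$ (an arc $vz$), each of which can absorb an element of $N^-(z)$ without creating a common $T$-neighbour of $v$ and $z$. To control these I would again use that the forcing arcs are only loops and directed $2$-cycles, so that $f^-=f^+$ is an involution whose orbits (the forcing cycles) are pairwise disjoint; this tightly constrains the forcing partners of $v$ and of $z$. Iterating the first step once more (legitimate since $|N^-(z)|\geq 2$) yields a third vertex $u$ with $N^-(u)=N^-(z)\setminus\{f^-(z)\}$, giving a ``staircase'' of three nested in-neighbourhoods; feeding this into the involution structure isolates the few coincidences that can occur, and in the surviving cases I would hunt for a triangle rather than a $4$-cycle among $v$, $z$ and a common in-neighbour. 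This degenerate, loop-sensitive analysis is the delicate heart of the statement, and it is where I expect essentially all the difficulty to lie.
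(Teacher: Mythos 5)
Your strategy is the same as the paper's: since $d^-(v)\geq 3$, the forcing in-arc of $v$ must be location-forcing, producing a vertex $z$ with $N^-(z)=N^-(v)\setminus\{f^-(v)\}$, and then common in-neighbours of $v$ and $z$ are supposed to create a cycle in $T$. The difference is that the paper asserts in one line that ``two common in-neighbours'' yield a cycle, whereas you correctly observe that a common in-neighbour only gives a path of length two in $T$ when it is a vertex \emph{other than} $v$ and $z$, and you explicitly flag the memberships $v\in N^-(z)$ and $z\in N^-(z)$ as unresolved. As written, your argument only disposes of the case $N^-(z)\cap\{v,z\}=\emptyset$; the remaining cases are left as a plan (``feed into the involution structure'', ``hunt for a triangle''), so the proof is incomplete. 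This is a genuine gap, not a cosmetic one.

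Worse, the gap cannot be closed, because the one configuration your $4$-cycle/triangle hunt does not reach --- namely $N^-(z)=\{v,z\}$, forcing both a loop at $z$ and a loop at $v$ together with the symmetric pair of arcs between $v$ and $z$, which contribute only the single edge $vz$ to $T$ --- is actually realizable. Take $V=\{u,v,z\}$ with arcs $uv$, $vu$, $vv$, $vz$, $zv$, $zz$. Then $N^-(u)=\{v\}$, $N^-(z)=\{v,z\}$, $N^-(v)=\{u,v,z\}$; the underlying graph is the path $u\,v\,z$; the forcing arcs are $vu$ (domination), $uv$ (since $N^-(v)\setminus N^-(z)=\{u\}$) and $zz$ (since $N^-(z)\setminus N^-(u)=\{z\}$), which form a spanning disjoint union of a $2$-cycle and a loop, so by Theorem~\ref{thmmain} this digraph is extremal (one also checks directly that no $2$-subset is an OLD set, and it arises verbatim from the construction of Theorem~\ref{thm:construct}). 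Yet $d^-(v)=3$. So the statement itself fails in exactly the degenerate case you isolated; the paper's proof silently assumes that the two common in-neighbours of $v$ and $z$ are third vertices, which is precisely the step that breaks here. Your instinct that ``essentially all the difficulty lies'' in the loop-sensitive cases was right, but no amount of further case analysis will rescue the claimed bound; at best one can prove $d^-(v)\leq 3$ with equality only in the configuration above.
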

	\begin{proof}
		Let $v$ be a vertex of $D$ with $d^-(v)>1$, then $f^-(v)$ is not domination-forced, so it is location-forced. It means that there is a vertex $u\in V(D)$
		such that $N^-(v)\setminus N^-(u)= \{f^-(v)\}$. Now, for contrary suppose that $d^-(v)\geq 3$. If $u$ is an in-neighbour of $v$, then $u$ and $v$ should have at least one common in-neighbour other than $v$, hence $T$ contains a cycle of length $3$, which is a contradiction. Otherwise,	
		vertices $v$ and $u$ have at least have two common in-neighbours, so in this case $T$ contains a cycle of length $4$, which is again a contradiction. Hence  $d^-(v)\leq 2$, as desired.
	\end{proof}
	
	Recall that by Theorem~\ref{thmmain}, the forcing arcs of an extremal digraph $D$ induce a disjoint union of directed cycles that spans the entire vertex set of $D$. If $D$ is a di-tree, 
	then these cycles are either loops or directed 2-cycles. In particular, if a vertex is loop-free, it is incident with a directed 2-cycle.

	\begin{lemma}\label{lemoutdegreeL-F}
		Let $D\in \mathcal{T}_n$ and $T$ be the underlying tree of $D$. Then for each location-forced vertex $v$ of $D$, $d^+(v)=1$.
	\end{lemma}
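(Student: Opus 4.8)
The plan is to translate the out-degree of $v$ into the language of $\mathcal{H}(D)$ via the construction of Theorem~\ref{thm:construct}, and then to kill the relevant subtree of $\mathcal{H}(D)$ using the in-degree bound of Lemma~\ref{lemtd-<2}. First I would fix a location-forced vertex $v$ and set $u=f^+(v)$, its unique forcing out-neighbour, so that $vu$ is a forcing arc. By Proposition~\ref{propblueblue}, $v$ is location-forced but not domination-forced; in particular $N^-(u)\neq\{v\}$, since otherwise $u$ would witness that $v$ is domination-forced. Hence $d^-(u)\geq 2$, and Lemma~\ref{lemtd-<2} forces $d^-(u)=2$, say $N^-(u)=\{v,a\}$.

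Next I would show that $u$ is a sink of $\mathcal{H}(D)$. Suppose instead that $u$ has an out-neighbour $c$ in $\mathcal{H}(D)$. By Definition~\ref{def:H(D)}, there is a location-forced vertex $p$ with $N^-(u)=N^-(c)\setminus\{p\}$. Since $D$ is locatable, $N^-(u)\neq N^-(c)$, so $p\in N^-(c)$ and $N^-(c)=N^-(u)\cup\{p\}$ with $p\notin N^-(u)$; therefore $d^-(c)=d^-(u)+1=3$, contradicting Lemma~\ref{lemtd-<2}. Thus $u$ has no out-neighbour in $\mathcal{H}(D)$, i.e.\ $u$ is a sink.

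Finally I would read off the out-degree of $v$ directly from the construction. By the last step of Theorem~\ref{thm:construct}, the arcs of $D$ with tail $v$ are exactly those created from the vertex $y$ of $\mathcal{H}(D)$ with $f^-(y)=v$, namely $y=f^+(v)=u$; their heads are precisely the descendants of $u$ in the rooted tree of $\mathcal{H}(D)$ containing it (including $u$ itself, which yields the forcing arc $vu$). Since $u$ is a sink of $\mathcal{H}(D)$, it has no proper descendants, so this set equals $\{u\}$ and $d^+(v)=1$, as claimed.

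The main obstacle is the middle step: one must recognise that the branching of the out-neighbourhood of $v$ is governed entirely by the subtree of $\mathcal{H}(D)$ rooted at $f^+(v)$, and then exploit the in-degree bound of Lemma~\ref{lemtd-<2} (itself a consequence of the underlying graph being a tree) to show that this subtree is trivial. I expect the only subtlety to be bookkeeping around the definitions of a forcing arc and of $\mathcal{H}(D)$; note that the loop case $u=v$ is handled uniformly by the same computation, since a forcing loop at $v$ still yields $d^-(v)=2$ and the identical size argument forces $v$ to be a sink of $\mathcal{H}(D)$.
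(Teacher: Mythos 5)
Your proof is correct, but it takes a genuinely different route from the paper's. The paper argues locally and by contradiction: if $v$ had an out-neighbour $x\neq f^+(v)$, then Lemma~\ref{lemtd-<2} gives $N^-(x)=\{f^-(x),v\}$, so $f^-(x)$ must be location-forced, which produces a vertex $y$ with $N^-(y)=\{v\}$; this makes $v$ domination-forced as well, i.e.\ double-forced, contradicting Proposition~\ref{propblueblue}. You instead go through the global structure theorems: you show that $u=f^+(v)$ has in-degree exactly $2$ (it cannot be $1$, again by Proposition~\ref{propblueblue}), deduce that $u$ is a leaf of $\mathcal{H}(D)$ since any child of $u$ in $\mathcal{H}(D)$ would have in-degree $3$, and then read off $N^+(v)=\{u\}$ from the last step of Theorem~\ref{thm:construct}, which identifies $N^+(w)$ with the set of descendants of $f^+(w)$ in $\mathcal{H}(D)$. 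Both arguments ultimately rest on the in-degree bound of Lemma~\ref{lemtd-<2}; the paper's version is more elementary and self-contained (it needs only Proposition~\ref{propblueblue} and the definition of forcing arcs), while yours buys a cleaner conceptual picture --- the out-degree of any vertex equals the size of the $\mathcal{H}(D)$-subtree rooted at its forcing out-neighbour, and that subtree is trivial precisely when the forcing in-arc of that out-neighbour is a location-forcing arc in a tree --- at the price of invoking the heavier machinery of Theorems~\ref{thm:H(D)} and~\ref{thm:construct}. Since both of those are established before Section~\ref{sec:trees}, there is no circularity, and your treatment of the loop case $u=v$ is also sound.
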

	
	\begin{proof}
		Let $v$ be a location-forced vertex of $D$. For contrary suppose that there exits a vertex  $x\neq f^+(v)$
		such that $vx \in A(D)$. By Lemma~\ref{lemtd-<2}, we have $N^-(x)=\{f^-(x),v\}$. 
		Hence, $f^-(x)$ is not domination-forced and so it is location-forced. Therefore, there exists a vertex $y$
		such that $N^-(x)\setminus N^-(y)= \{f^-(x)\}$. We conclude that $N^-(y)=\{v\}$, this means that $v$ is domination-forced which contradicts Proposition \ref{propblueblue}.
	\end{proof}

	We next prove two structural lemmas.
	
	\begin{lemma}\label{lem-redloop-leaf}
		Let $D\in \mathcal{T}_n$ and $T$ be the underlying tree of $D$.
		Suppose that $a$ is a leaf in $T$ with a forcing loop attached to $a$. Then there is no cycle of length~2 in $D$ which contains $a$.
	\end{lemma}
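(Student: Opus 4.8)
The plan is to argue by contradiction. Suppose some directed $2$-cycle of $D$ contains $a$. Since $a$ is a leaf of the underlying tree $T$, its only neighbour in $T$ is a single vertex, say $b$, and every arc incident with $a$ other than its loop must join $a$ and $b$. Hence the $2$-cycle must consist of the arcs $ab$ and $ba$. Together with the forcing loop at $a$, this shows $N^-(a)=N^+(a)=\{a,b\}$, so in particular $d^+(a)=2$ (the loop and the arc $ab$ give two distinct out-neighbours). First I would record, using Theorem~\ref{thmmain}, that the forcing loop at $a$ is the \emph{unique} forcing arc with tail $a$: the forcing arcs span $V(D)$ as a disjoint union of directed cycles, so every vertex is the tail of exactly one forcing arc, and for $a$ this forcing arc is its loop.

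Next I would split into two cases according to the type of forcing of $a$. By Proposition~\ref{prop:all-forced} (or equivalently Corollary~\ref{cor-notdomforc-locforc}), since $\gamma_{OL}(D)=n$, the vertex $a$ is domination-forced or location-forced. If $a$ is location-forced, then Lemma~\ref{lemoutdegreeL-F} forces $d^+(a)=1$, directly contradicting $d^+(a)=2$ obtained above. So this case is immediate.

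The remaining case is that $a$ is domination-forced: there is a vertex $w$ with $N^-(w)=\{a\}$. Since $a\in N^-(w)$ means the arc $aw$ exists, we have $w\in N^+(a)=\{a,b\}$, and I would rule out both possibilities. If $w=a$, then $N^-(a)=\{a\}$, contradicting $N^-(a)=\{a,b\}$. If $w=b$, then $N^-(b)=\{a\}$, so by Definition~\ref{def:red-edges} the arc $ab$ is itself a forcing arc; but then $a$ is the tail of two distinct forcing arcs, namely its loop $aa$ and the arc $ab$, contradicting the uniqueness established from Theorem~\ref{thmmain}. Either way we reach a contradiction, which completes the proof.

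I expect the only delicate point to be the domination-forced case: one must check that the witness $w$ can only be $a$ or $b$ (which uses that $a$ is a leaf, so $N^+(a)\subseteq\{a,b\}$) and then recognise that $w=b$ turns $ab$ into a second forcing arc out of $a$, which is exactly what Theorem~\ref{thmmain} forbids. Care is also needed to count the loop correctly when computing $d^+(a)$ and $N^-(a)$, since a loop contributes to both the in- and out-neighbourhood of its vertex.
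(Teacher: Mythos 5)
Your proof is correct, and it takes a noticeably different route from the paper's. The paper first rules out that $a$ is domination-forced (its unique outgoing forcing arc is the loop, which cannot be a domination-forcing arc since $d^-(a)=2$), concludes $a$ is location-forced with a witness $c$ satisfying $N^-(c)=\{b\}$, and then follows a chain of deductions ($b$ domination-forced, hence $cb$ forcing, hence $c$ location-forced, hence some $f$ with $N^-(f)=\{a\}$) ending with $a$ being double-forced, contradicting Proposition~\ref{propblueblue}. You instead split on the type of forcing of $a$ and dispatch each case quickly: the location-forced case dies immediately against Lemma~\ref{lemoutdegreeL-F} because $d^+(a)=|\{a,b\}|=2$ (the paper proves that lemma just before this one but, curiously, does not use it here), and the domination-forced case dies because the witness $w$ with $N^-(w)=\{a\}$ must lie in $N^+(a)\subseteq\{a,b\}$, with $w=a$ contradicting $N^-(a)=\{a,b\}$ and $w=b$ producing a second forcing arc out of $a$, against the uniqueness given by Theorem~\ref{thmmain}. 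In effect you have outsourced the paper's chain of forced-vertex deductions to the already-proved Lemma~\ref{lemoutdegreeL-F}, which makes your argument shorter and more modular; the paper's version is self-contained but repeats essentially the same style of reasoning inline. Both arguments are sound, and your bookkeeping of the loop's contribution to $N^-(a)$, $N^+(a)$ and $d^+(a)$ is consistent with how the paper counts loops elsewhere (e.g.\ in Lemma~\ref{lempendant}).
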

	\begin{proof}
		Towards a contradiction, suppose that there is a cycle of length~2 containing the arcs $ab$ and $ba$ (by Theorem\ref{thmmain}, we know that none of these two arcs are forcing arcs). Since $d^-(a)=2$, we conclude that $f^-(a)=a$ is not domination-forced, and hence by Proposition~\ref{prop:all-forced}, it is location-forced.
		Since $aa$ is a forcing loop, we conclude that there exists a vertex $c$ such that $N^-(a)\setminus N^-(c)=\{a\}$.  Using $N^-(a)=\{a,b\}$, we have $N^-(c)=\{b\}$
		(note that $b\neq c$). Therefore, $b$ is domination-forced, and since all forcing cycles are of length at most~2, we conclude that $bc$ and $cb$ are both forcing arcs. Since $d^-(b)\geq 2 $ (in fact by Lemma~\ref{lemtd-<2}, $d^-(b)= 2$), $c$ cannot be domination-forced, and by Proposition~\ref{prop:all-forced}, it is location-forced.
		Since $cb$ is a forcing arc, we conclude that there is a vertex $f$
		such that $N^-(b)\setminus N^-(f)=\{c\}$, which means that $N^-(f)=\{a\}$.  The latter means that $af$ is also a forcing arc. We note that since $d^-(a)=2$ and $d^-(f)=1$, $f\neq a$. Therefore, $a$ is contained  in two different forcing-cycles, which contradicts Theorem~\ref{thmmain}. Thus, the proof is complete.
	\end{proof}
	
	\begin{lemma}\label{lemtreecond}
		Let $D\in \mathcal{T}_n$, $T$ be the underlying tree of $D$ and $v$ be a leaf of $T$. Then
		at least one of the following conditions hold: 
		\begin{enumerate}
			\item  $T$ contains a pendant path of length~2 whose leaf is contained in a forcing $2$-cycle.
			\item  $T$ contains a leaf which is included in a forcing loop.
		\end{enumerate}
		
	\end{lemma}
	\begin{proof}
		We recall that by Theorem~\ref{thmmain}, every leaf belongs to a unique forcing cycle in $D$ (of length at most~2). If $T$ contains a pendant path of length~2, then its leaf is either contained in a forcing cycle of length $2$ or of length one and hence $D$ satisfies at least one of the mentioned conditions. Otherwise, $T$ should not have any pendant path of length~2. Therefore, $T$ contains two leaves adjacent to the same vertex. Now, using  Theorem~\ref{thmmain}, at least one of these two leaves must have a forcing loop attached, which concludes the proof.
	\end{proof}

	\subsection{The case of a leaf with a forcing loop attached}
	
	Next, we give a recursive construction for digraphs
	$D\in \mathcal{T}_n$, which contain a forcing loop on a leaf. To this aim, we will use digraphs $D'\in \mathcal{T}_{n-1}$.
	
	\begin{lemma}\label{lemrecursive1}
		Let $n>2$ be an integer, $D\in \mathcal{T}_n$ and $T$ be the underlying graph of $D$. Suppose that $a$ is a leaf in $T$ with a forcing loop attached and $b$ is the unique neighbour of $a$ in $T$.
		Letting $D'=D\setminus \{a\}$, then $D'\in \mathcal{T}_{n-1}$. Moreover, if $ba\in A(D)$, then $b$ is domination-forced in $D$ and also in $D'$. If
		$ab\in A(D)$, then $f^-_{D'}(b)$ is domination-forced in $D'$, $d^+_{D}(f_D^-(b))=1$ and $b$ has no loop attached.

	\end{lemma}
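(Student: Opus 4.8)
The plan is to first determine the arcs incident with $a$, then establish $D'\in\mathcal T_{n-1}$, and finally read off the two conditional statements. Since $a$ carries a forcing loop we have $f^-(a)=f^+(a)=a$, and Lemma~\ref{lem-redloop-leaf} rules out a $2$-cycle through $a$; as $a$ and $b$ are adjacent in $T$, exactly one of $ba$, $ab$ is an arc. If $ba\in A(D)$, then $N^-(a)=\{a,b\}$ and $N^+(a)=\{a\}$, and because $N^-(a)\neq\{a\}$ the forcing loop is of location type, so $a$ is location-forced. If $ab\in A(D)$, then $N^-(a)=\{a\}$ and $N^+(a)=\{a,b\}$, so $a$ is domination-forced. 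In either case $a$ has, by Proposition~\ref{propblueblue}, exactly one of the two statuses.

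To handle $D'=D\setminus\{a\}$, note that deleting a leaf keeps the underlying graph a tree and keeps $D'$ connected, so it suffices to prove $\gamma_{OL}(D')=n-1$. I would do this by showing that every vertex of $D'$ is forced in $D'$: then Corollary~\ref{corlacatable} guarantees that $D'$ is locatable, and since a forced vertex lies in every OLD set, the whole set is the only OLD set and $\gamma_{OL}(D')=n-1$. The key observation is that deletion barely changes in-neighbourhoods: when $ba\in A(D)$ we have $N^-_{D'}(w)=N^-_D(w)$ for all $w\neq a$, and when $ab\in A(D)$ the same holds for all $w\neq b$, while $N^-_{D'}(b)=N^-_D(b)\setminus\{a\}$. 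When $ba\in A(D)$, no forcing certificate in $D$ can involve $a$: a domination witness $N^-(u)=\{w\}$ has $u\neq a$ since $|N^-(a)|=2$, and in a pair $x,y$ with $N^-(x)\ominus N^-(y)=\{w\}$ and $w\neq a$, taking $x=a$ would place $a\in N^-(a)\setminus N^-(y)$ and force $w=a$, a contradiction. Hence every vertex stays forced and $D'\in\mathcal T_{n-1}$ follows at once.

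The substantive case is $ab\in A(D)$. Here $ab$ is non-forcing (the forcing cycle through $a$ is its loop), so Definition~\ref{def:red-edges} gives $N^-_D(b)\neq\{a\}$, whence $d^-_D(b)=2$ by Lemma~\ref{lemtd-<2}; write $N^-_D(b)=\{a,c\}$. Since $d^-_D(b)>1$, the incoming forcing arc of $b$ issues from a location-forced vertex, and since $a$ is only domination-forced, Proposition~\ref{propblueblue} forces $f^-_D(b)=c$ to be location-forced; Lemma~\ref{lemoutdegreeL-F} then yields $d^+_D(c)=1$, which is the asserted equality $d^+_D(f^-_D(b))=1$. Moreover $b$ has no loop: a loop would give $b\in N^-_D(b)=\{a,c\}$, i.e.\ $c=b$, making $b$ location-forced with $f^-(b)=b$, and then $d^+_D(b)=1$ by Lemma~\ref{lemoutdegreeL-F} would leave $a$ as the unique tree-neighbour of $b$, so $T$ would be a single edge and $n=2$, a contradiction. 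Consequently $c\notin\{a,b\}$, and after deleting $a$ we obtain $N^-_{D'}(b)=\{c\}$, so $c$ becomes the unique in-neighbour of $b$ in $D'$; thus $c=f^-_{D'}(b)$ and $c$ is domination-forced in $D'$, which is the last conditional claim.

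The main obstacle is the bookkeeping that, when $ab\in A(D)$, \emph{every} remaining vertex stays forced. Each $w\notin\{a,b\}$ keeps its in-neighbourhood, so a domination or location certificate of $w$ in $D$ survives in $D'$ unless it involves the deleted vertex $a$ or the shrunken vertex $b$; a short case check shows that the only certificate genuinely lost is the pair witnessing that $c$ is location-forced (which forces $w=c$), and this is exactly recovered by $c$ becoming domination-forced through $b$ as above. The vertex $b$ itself stays forced because its forcing arc $bc$ relies only on $N^-_{D'}(c)=N^-_D(c)$ together with a witness distinct from $a$ and $b$. Once all vertices of $D'$ are forced, $D'\in\mathcal T_{n-1}$ follows as explained above via Corollary~\ref{corlacatable}. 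Finally, for $ba\in A(D)$, the remaining conditional claim that $b$ is domination-forced follows from $a$ being location-forced: a pair $x,y$ with $N^-(x)\ominus N^-(y)=\{a\}$ must have, say, $x=a$, whence $N^-(y)=N^-(a)\setminus\{a\}=\{b\}$; thus $y$ certifies that $b$ is domination-forced in $D$, and as $y\neq a$, also in $D'$.
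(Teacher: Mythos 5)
Your proof is correct and follows essentially the same route as the paper's: the same case split on whether $ba$ or $ab$ is the arc incident with the looped leaf, the same derivation of the forced statuses of $a$, $b$ and $f^-_D(b)$, and the same certificate-tracking argument (only the pair witnessing that $f^-_D(b)$ is location-forced is lost, and it is replaced by a domination certificate) to conclude $D'\in\mathcal T_{n-1}$. The only differences are local streamlinings: you invoke Lemma~\ref{lemoutdegreeL-F} where the paper re-derives $d^+_D(f^-_D(b))=1$ by an explicit double-forcing contradiction, and you exclude a loop at $b$ by noting that $d^+_D(b)=1$ would force $n=2$, whereas the paper instead constructs a second domination certificate at $b$ to contradict Proposition~\ref{propblueblue}.
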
		
	\begin{proof}
		By Proposition~\ref{prop:all-forced}, $a$ is either location-forced or domination-forced in $D$. First suppose that $a$ is location-forced. Hence, $a$ is not the unique neighbour of itself. Using Lemma \ref{lemoutdegreeL-F}, we conclude that $d^+(a)=1$, hence $ab\not\in A(D)$. On the other hand, using Lemma~\ref{lemtd-<2}, we conclude that $N^-(a)=\{a,b\}$.  As $a$ is location-forced, there is a vertex $x$
		such that $N_D^-(x)=N_D^-(a)\setminus \{a\}=\{b\}$, 
		and hence $b=f^-(x)$, or equivalently $b$ is domination-forced (in both $D$ and $D'$).

		Now, suppose that $a$ is domination-forced, which implies that $ba\not \in A(D)$, so $ab\in A(D)$. We show that there is no loop at $b$ in $D$.
		Since otherwise by Lemma~\ref{lemtd-<2}, $d^-_D(b)= 2$ and $N^-_D(b)=\{a,b\}$, hence, $N^-_D(b)\setminus N^-_D(a)=\{b\}$ and $bb$ is a forcing arc in $D$. Therefore, $f_D^-(b)=b$, hence $b$ is location-forced in $D$. Since $D$ is connected and has more than two vertices, we conclude that there is a vertex $c\neq b$ in $D'$ such that $bc\in A(D)$. This contradicts Lemma \ref{lemoutdegreeL-F}. 
		Therefore, there is no loop at $b$ and $f_D^-(b)\neq b$. By Lemma~\ref{lemtd-<2}, we have $d^-_D(b)=2$, hence $d^-_{D'}(b)=1$ and so, $f_{D'}^-(b)$ is domination-forced in $D'$. 	
		Now, since  $f^-_{D}(b)$ is location-forced, by Lemma \ref{lemoutdegreeL-F} we conclude that $d^+(f^-_{D}(b))=1$, as desired.

		To complete the proof of the lemma, we must show that if $a$ is domination-forced, then $D'\in \mathcal{T}_{n-1}$. By deleting the vertex $a$ and its incident arcs, for every vertex $v\neq b$ of $D'$, we have $N^-_{D'}(v)=N^-_D(v)$. On the other hand, by Lemma~\ref{lemtd-<2}, $d^-_D(b)=2$ and so, $d^-_{D'}(b)=1$, which shows that $x=f_D^-(b)$ is domination-forced in $D'$. Hence, all domination-forced (resp. location-forced) vertices of $V(D)\setminus \{x\}$ remain domination-forced (resp. location-forced) in $D'$, and $x$ is domination-forced. Therefore, all vertices are forced and $D'\in \mathcal{T}_{n-1}$, as desired.
	\end{proof}
	
	We now show the converse of Lemma~\ref{lemrecursive1}.
	
	\begin{lemma}\label{lemrecursive1-construct}
		Let $n>1$ be an integer, $D'\in \mathcal{T}_{n-1}$ and $b\in V(D')$. Suppose that $D$ is a digraph with $V(D)=V(D')\cup\{a\}$ and the arc set of $D$ is defined using one of the following rules.
		\begin{itemize}
			\item[i.]  If $b$ is domination-forced in $D'$, then $A(D)=A(D')\cup \{ba, aa\}$.
			\item[ii.] If $bb\not\in A(D)$, $d^+_{D'}(f^-(b))=1$ and
			$d^-_{D'}(b)=1$ in $D'$,  then
			$A(D)=A(D')\cup \{ab, aa\}$.
		\end{itemize}
		Then, $D\in \mathcal{T}_n$.
	\end{lemma}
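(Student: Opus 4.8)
The plan is to verify the three defining properties of $\mathcal{T}_n$ for the constructed digraph $D$ in turn: order $n$, underlying tree, and $\gamma_{OL}(D)=n$. The first is immediate since $V(D)=V(D')\cup\{a\}$. For the second, note that under either rule the new vertex $a$ is joined to the rest only through the single underlying edge between $a$ and $b$ (the loop $aa$ vanishes in the underlying graph), so the underlying graph of $D$ is the underlying tree of $D'$ with one extra pendant vertex at $b$; it is again a tree, and $D$ is connected because $D'$ is. Hence the whole content is to prove $\gamma_{OL}(D)=n$, and for this I will show that \emph{every} vertex of $D$ is forced. Once this is done, $D$ is locatable by Corollary~\ref{corlacatable}, so $V(D)$ is an OLD set, and since each forced vertex lies in every OLD set, Proposition~\ref{prop:all-forced} yields $\gamma_{OL}(D)=n$. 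The simplification I will lean on throughout is that the added arcs change the in-neighbourhood of at most one old vertex: in rule (i) only $a$ receives new in-arcs, whereas in rule (ii) only $b$ gains the new in-neighbour $a$.

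Rule (i) is short. Since no vertex of $V(D')$ changes its in-neighbourhood, each such vertex keeps the very witness (a dominated vertex, or a separated pair) it had in $D'$ and so remains forced. To force $a$, I take the vertex $w$ with $N^-_{D'}(w)=\{b\}$ guaranteed by $b$ being domination-forced, and compute $N^-_D(a)\ominus N^-_D(w)=\{a,b\}\ominus\{b\}=\{a\}$, so $a$ is location-forced.

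Rule (ii) is the substantial case. Writing $x=f^-_{D'}(b)$, the hypotheses that $x$ is domination-forced and $d^+_{D'}(x)=1$ combine to force $N^+_{D'}(x)=\{b\}$ and $N^-_{D'}(b)=\{x\}$, so that $N^-_D(b)=\{x,a\}$ and $N^-_D(a)=\{a\}$. First, $a$ is its own unique in-neighbour, hence domination-forced. Next, $x$ — which in $D'$ was domination-forced only through $b$ — loses that witness (as $b$ now has two in-neighbours) but becomes location-forced in $D$ through the newly available pair $(a,b)$, since $N^-_D(a)\ominus N^-_D(b)=\{a\}\ominus\{x,a\}=\{x\}$. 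The delicate point is the remaining vertices. A vertex $v\neq a,x$ is forced in $D'$, and the only risk is that its certificate involves $b$ (the one vertex whose in-neighbourhood changed): a domination witness $w$ with $N^-_{D'}(w)=\{v\}$ might be $b$, and the unique separating pair of $v$ — which by Proposition~\ref{propblueblue} and Theorem~\ref{thmmain} must take the form $(f^+(v),z)$, because $v$ owns exactly one outgoing forcing arc — might have $z=b$. This is exactly where $d^+_{D'}(x)=1$ does the real work: $w=b$ would give $N^-_{D'}(b)=\{v\}$, forcing $v=x$; and $z=b$ would give $N^-_{D'}(f^+(v))\ominus\{x\}=\{v\}$, hence $N^-_{D'}(f^+(v))=\{x,v\}$, so $x\to f^+(v)$ and $f^+(v)\in N^+_{D'}(x)=\{b\}$, again forcing $v=x$ — both excluded. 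Consequently no surviving certificate touches $b$, every such $v$ keeps its witness, and $b$ itself stays forced: domination-forced through $x$ if $N^-_{D'}(x)=\{b\}$, and otherwise location-forced through the same pair $(x,z)$ it used in $D'$, where one checks (again using $d^+_{D'}(x)=1$ and the absence of a loop at $b$) that $z\neq b$.

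I expect the main obstacle to be precisely this last analysis in rule (ii): showing that enlarging $N^-(b)$ by $\{a\}$ destroys neither the forcedness of $b$ nor that of any vertex whose separating/domination certificate ran through $b$. The clean way to organise it is to invoke that each vertex has a single canonical outgoing forcing arc, so that each location-forced $v$ has one canonical separating pair $(f^+(v),z)$; the whole difficulty then collapses to ruling out $z=b$ and $f^+(v)=b$, which the out-degree hypothesis $d^+_{D'}(x)=1$ secures, while all other vertices are handled by the trivial observation that their in-neighbourhoods are untouched.
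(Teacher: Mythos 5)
Your proposal is correct and follows essentially the same route as the paper's proof: you verify that every vertex of $D$ is forced, using exactly the same certificates ($a$ location-forced via the vertex dominated only by $b$ in rule~(i); $a$ domination-forced and $f^-_{D'}(b)$ location-forced via the pair $(a,b)$ in rule~(ii)), while the remaining vertices keep their certificates from $D'$. The only difference is that you spell out in detail the verification that no old vertex's certificate runs through $b$ in rule~(ii), which the paper dismisses with ``it is easy to see''.
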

	
	\begin{proof}
		\begin{itemize}
			\item [i.]
			Since $b$ is domination-forced, $N_D^-(a)\setminus N_D^-(f^-(b))=\{a\}$, therefore, $a$ is location-forced in $D$. On the other hand, if a vertex is domination-forced (resp. location-forced) in $D'$, then it is
			domination-forced (resp. location-forced) in $D$. Hence, $D\in \mathcal{T}_n$, as desired.
			\item [ii.] In this case, since  $N_D^-(a)=\{a\}$, we conclude that $a$ is domination-forced. Since $N_D^-(b)\setminus N_D^-(a)=\{f_{D'}^-(b)\}$, $f^-_{D'}(b)$ is location-forced in $D$. Moreover, it is easy to see that all domination-forced vertices of $D'$ except $f^-_{D'}(b)$, remain domination-forced in $D$, and since  $d^+_{D'}(f^-(b))=1$, all location-forced vertices in $D'$ remain location-forced in $D$. Hence, $D\in \mathcal{T}_n$.\qedhere
		\end{itemize}
	\end{proof}

	\subsection{The case of a pendant path of length~2 whose leaf is contained in a forcing 2-cycle}
	
	In the following lemma, we give a recursive construction for digraphs
	$D\in \mathcal{T}_n$ with underlying tree $T$,
	in which $T$ contains a pendant path of length~2 whose leaf is contained in a forcing $2$-cycle. In this recursive construction, we will use digraphs $D'\in \mathcal{T}_{n-2}$.
	
	\begin{lemma}\label{lempendant}
		Let $n\geq 3$ be an integer, $D\in \mathcal{T}_n$ and $T$ be the underlying tree of $D$. Let $P=c b a$ be a pendant path of length~2 in $T$. Assume that $d_T(a)=1$, $D'=D\setminus \{a,b\}$ and vertices $a,b$ are contained in a common forcing $2$-cycle. Then, $D'\in\mathcal{T}_{n-2}$. Moreover, the  following conditions hold:
		\begin{itemize}
			\item[i.] $aa\not \in A(D)$;
			\item[ii.] If $bb\in A(D)$, then $cb\not\in A(D)$. Moreover, $f^-(c)$ is location-forced in $D$ and domination-forced in $D'$. Furthermore, if $c=	f^-(c)$, then the only possibility for $D$ is  the digraph shown in Figure \ref{fig:c=f}(a).
			\item[iii.] If $bb\not \in A(D)$ and $f^-(c)$ is domination-forced in $D$, then $N^-_D(c)=\{f^-(c)\}$ and $f^-(c)\neq b$. Hence, $bc \not \in A(D)$ and since $D$ is connected, $cb\in A(D)$. 
			\item[iv.] If $bb\not \in A(D)$ and $c$ is domination-forced in $D$ and $f^-(c)$ is location-forced, then $d^+_D(f^-(c))=1$.
			
			\item[v.] If $bb\not \in A(D)$ and $c$ and $f^-(c)$ are both location-forced in $D$, then $cb\not\in A(D)$ and $bc\in A(D)$. Moreover, if $c=f^-(c)$, then the only possibility for $D$ is the digraph shown in Figure~\ref{fig:c=f}(b).
		\end{itemize}

	\end{lemma}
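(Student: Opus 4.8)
The plan is to first read off the local structure forced by Theorem~\ref{thmmain}: the forcing arcs of $D$ form a spanning disjoint union of directed cycles, and since the underlying graph is the tree $T$, each such cycle is a loop or a directed $2$-cycle. As $a$ and $b$ share a forcing $2$-cycle, both $ab$ and $ba$ are forcing arcs with $f^-(a)=f^+(a)=b$ and $f^-(b)=f^+(b)=a$. Since $d_T(a)=1$, every arc at $a$ joins $a$ to $b$ or is the loop $aa$; since $d_T(b)=2$, every arc at $b$ joins $b$ to $a$ or to $c$, or is the loop $bb$. These constraints, together with Lemma~\ref{lemtd-<2}, reduce the whole lemma to a finite case analysis.

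First I would prove (i). A forcing loop at the leaf $a$ is impossible by Lemma~\ref{lem-redloop-leaf}, because $a$ lies on a $2$-cycle. If instead $aa$ were a non-forcing loop, then $d^+(a)=2$, so $a$ is not location-forced by Lemma~\ref{lemoutdegreeL-F}, hence domination-forced by Corollary~\ref{cor-notdomforc-locforc}; as $b\in N^-(a)$, the vertex $a$ cannot be the unique in-neighbour of itself, so it must be the unique in-neighbour of $b$, giving $N^-(b)=\{a\}$. Then $d^-(a)=2$ makes the forcing arc $ba$ location-forcing, so $b$ is location-forced; but $bc\in A(D)$ (as $cb\notin A(D)$) together with $ba$ gives $d^+(b)\geq 2$, contradicting Lemma~\ref{lemoutdegreeL-F}. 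Hence $aa\notin A(D)$, so $N^-(a)=\{b\}$ and $b$ is domination-forced; by Proposition~\ref{propblueblue}, $b$ is then not location-forced. This basic picture holds in every case.

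Next I would analyse $D'=D\setminus\{a,b\}$. Deleting the pendant path leaves the tree $T-\{a,b\}$, and the only vertex outside $\{a,b\}$ whose in-neighbourhood can change is $c$, and only when $bc\in A(D)$, in which case $N^-_{D'}(c)=N^-_D(c)\setminus\{b\}$. Thus every vertex other than $c$ and $f^-(c)$ keeps its in-neighbourhood and its forcing reason, and the substance of the proof is to control the forcing status of $c$ and $f^-(c)$. I would split according to whether $bb\in A(D)$. When $bb\in A(D)$, Lemma~\ref{lemtd-<2} forces $N^-(b)=\{a,b\}$, hence $cb\notin A(D)$ and $bc\in A(D)$ by connectedness; since the forcing arc into $c$ is then location-forcing (its tail is not $b$, because $b$'s forcing partner is $a$, and $d^-(c)\geq 2$), $f^-(c)$ is location-forced in $D$, while in $D'$ the loss of $b$ leaves $N^-_{D'}(c)=\{f^-(c)\}$, making $f^-(c)$ domination-forced in $D'$; this yields (ii). When $bb\notin A(D)$ I would separate the subcases by the forcing types of $c$ and $f^-(c)$, using that $f^-(c)$ location-forced makes the arc $f^-(c)c$ location-forcing (so $d^-(c)=2$ and $b\in N^-(c)$), that $f^-(c)$ domination-forced makes $N^-(c)=\{f^-(c)\}$ with $f^-(c)\neq b$ (else $b$ is domination-forced through both $a$ and $c$, contradicting Proposition~\ref{propblueblue}), and Lemma~\ref{lemoutdegreeL-F} to obtain $d^+(f^-(c))=1$ and, via $d^+(c)=1$, the arc conclusions of (iii)--(v).

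The part I expect to be most delicate is the degenerate situation $c=f^-(c)$ appearing in (ii) and (v), where $c$ carries a forcing loop. There the local cycle structure is tightest, and one must chase the location-forcing witnesses of $c$ and of the vertices around it to show that no further vertices can be attached without creating a double-forced vertex or a pair of in-twins; this collapses $D$ to exactly the small digraphs of Figure~\ref{fig:c=f}. More generally, the main obstacle throughout is the bookkeeping across the deletion of $\{a,b\}$: each location-forcing witness that previously distinguished a pair \emph{through} $b$ must be shown either to still distinguish that pair through $f^-(c)$, or to turn $f^-(c)$ into a domination-forcing in-neighbour of $c$ in $D'$, and at no point may a double-forced vertex arise. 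I would lean on Proposition~\ref{propblueblue} to exclude the bad configurations and on Lemma~\ref{lemoutdegreeL-F} to control the out-degrees of the location-forced vertices involved, and finally conclude $D'\in\mathcal{T}_{n-2}$.
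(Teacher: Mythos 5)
Your proposal is correct and follows essentially the same route as the paper: the same local case analysis around $a$, $b$, $c$ driven by Theorem~\ref{thmmain}, Lemmas~\ref{lemtd-<2} and~\ref{lemoutdegreeL-F} and Proposition~\ref{propblueblue}, with only cosmetic differences (e.g.\ in (i) and (v) you use out-degree bounds and the disjointness of forcing cycles where the paper chases the location witnesses, and you make explicit the bookkeeping for $D'\in\mathcal{T}_{n-2}$ that the paper leaves implicit). The one justification to repair is your claim in (iii) that $f^-(c)=b$ would mean $b$ is ``domination-forced through both $a$ and $c$, contradicting Proposition~\ref{propblueblue}'': being domination-forced for two different vertices is not ``double-forced'' in the sense of Definition~\ref{defforcedver}, so argue instead that $N^-(a)=N^-(c)=\{b\}$ would make $a$ and $c$ in-twins, contradicting locatability (equivalently, $b$ already has its unique outgoing forcing arc $ba$ by Theorem~\ref{thmmain}).
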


	\begin{figure}[!htpb]
		\centering
		\begin{tikzpicture}
			[
			-{Latex[scale=1.1]},
			auto,
			node distance = 2cm, 
			thick 
			]
			
			\tikzset{every state}=[
			draw = black,
			thick,
			fill = white,
			minimum size = 1mm
			]
			\begin{scope}[xshift=10.5cm,yshift=-5cm]
				
				\node[shape=circle,draw=black] (1) at (0,0) {$a$};
				\node[shape=circle,draw=black] (2) at (2.5,0) {$b$};
				\node[shape=circle,draw=black] (3) at (5,0) {$c$};
				
				\path[draw,very thick,dashed] (1) edge[bend left] (2);
				\path[draw,very thick,dashed] (2) edge[bend left] (1);
				\path [draw,very thick, color=black](2) edge node[left] {} (3);
				\path[draw,very thick,dashed] (3) edge[out=50,in=100,loop, min distance=10mm] node {} (3);
				\path[draw,very thick] (2) edge[out=50,in=100,loop, min distance=10mm] node {} (2);
				\node (title) at (2.5,-3/2) {(a) };
			\end{scope}

			\begin{scope}[xshift=10.5cm,yshift=-5cm]

				\node[shape=circle,draw=black] (1) at (8,0) {$a$};
				\node[shape=circle,draw=black] (2) at (10.5,0) {$b$};
				\node[shape=circle,draw=black] (3) at (13,0) {$c$};
				
				\path[draw,very thick,dashed] (1) edge[bend left] (2);
				\path[draw,very thick,dashed] (2) edge[bend left] (1);
				\path [draw,very thick, color=black](2) edge node[left] {} (3);
				\path[draw,very thick,dashed] (3) edge[out=50,in=100,loop, min distance=10mm] node {} (3);
				\node (title) at (10.5,-3/2) {(b) };
			\end{scope}

		\end{tikzpicture}
		\caption{Two extremal di-trees of order~3. Forcing arcs are dashed.}\label{fig:c=f}
	\end{figure}
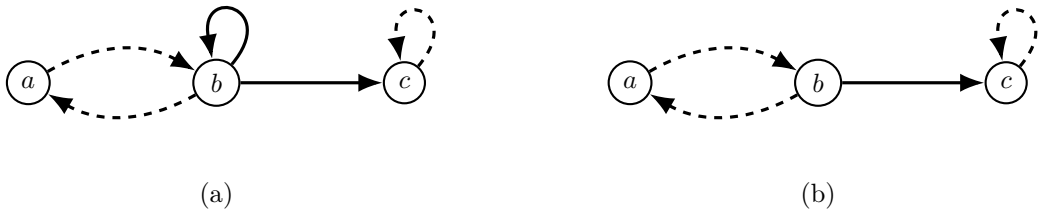

	\begin{proof}
		
		In the following, we prove that $D$ satisfies the claimed conditions.
		\begin{itemize}
			\item[i.] For a contradiction, suppose that $aa\in A(D)$. By Lemma~\ref{lemtd-<2}, we conclude that $d^-(a)=2$, and thus $b=f^-(a)$ is location-forced; then there is a vertex $x$ such that $N^-(a)\setminus N^-(x)=\{b\}$, which shows that $N^-(x)=\{a\}$. Since $a$ is a leaf in $T$, we conclude that $x=b$ and $cb\not \in A(D)$. Since $b$ is location-forced, by Lemma~\ref{lemoutdegreeL-F}, $d^+_D(b)=1$ and $bc \not \in A(D)$. Since the underlying graph of $D$ is connected we conclude that $a$ and $b$ are the only vertices of $D$, which contradicts the assumption that $n\geq 3$.
			\item[ii.] Since $ab\in A(D)$, using Lemma~\ref{lemtd-<2}, we conclude that $cb\not\in A(D)$ and so $bc\in A(D)$. Since $b\neq f^-(c)$, by Lemma \ref{lemtd-<2}, $d^-(c)=2$. Therefore, we conclude that $f^-(c)$ is location-forced in $D$ (and domination-forced in $D'$) and by Lemma~\ref{lemoutdegreeL-F}, $d^+_D(f^-(c))=1$.
			
			Moreover, if $c=f^-(c)$, then $d^+_D(c)=1$ and by Lemma~\ref{lemtd-<2}, $N^-_D(c)=\{c,b\}$. Hence, the vertex $c$ does not have any in-neighbour, other than $b$ and $c$ nor any out-neighbour, other than $c$, in $D$. Therefore, the only possibility for $D$ is the digraph shown in Figure~\ref{fig:c=f}(a).
			
			\item[iii.]  Suppose $bb\not \in A(D)$ and $f^-(c)$ is domination-forced in $D$. Then, $N^-_D(c)=\{f^-(c)\}$ and $f^-(c)\neq b$. Hence, $bc \not \in A(D)$.
			
			\item[iv.] We have $d^+(f^-(c))=1$ by Lemma~\ref{lemoutdegreeL-F}.
			
			\item[v.] By contradiction, suppose that $cb\in A(D)$. Since $b\neq f^+(c)$, we have $d^+(c)\geq 2$ which contradicts Lemma~\ref{lemoutdegreeL-F}.
			Thus, $cb\not\in A(D)$ and since $D$ is connected, $bc\in A(D)$.
			
			Now suppose that $c=f^-(c)$, then by Lemma~\ref{lemoutdegreeL-F}, $d^+_D(c)=1$ and by Lemma~\ref{lemtd-<2}, $N^-_D(c)=\{c,b\}$. Hence the vertex $c$ does not have any in-neighbour other than $b$ and $c$ or out-neighbour other than $c$, and the only possibility for $D$ is the digraph shown in Figure~\ref{fig:c=f}(b).\qedhere
		\end{itemize}	
	\end{proof}
	
	We now show the converse of Lemma~\ref{lempendant}.
	
	\begin{lemma}\label{lempendant-construct}
		Let $D'\in \mathcal{T}_{n-2}$ and $c$ be an arbitrary vertex of $D'$. Suppose that $D$ is a digraph with $V(D)=V(D')\cup\{a,b\}$ and the arc set of $D$ is defined using one of the following rules:
		\begin{itemize}
			\item[i.] If $c$ and $f^-(c)$ are both domination-forced in $D'$ and $d_{D'}^+(f^-(c))=1$, then $A(D)=A(D')\cup \{ ab, ba\}\cup A$, where $A\in \{\{bb, bc\}, \{cb,bc\}, \{bc\}, \{cb\}\}$.

			\item[ii.] If $c$ is location-forced in $D'$, $f^-(c)$ is domination-forced in $D'$ and $d_{D'}^+(f^-(c))=1$, then $A(D)=A(D')\cup \{bb, bc, ab, ba\}$ or $A(D)=A(D')\cup \{bc, ab, ba\}$.
			\item[iii.] If $c$  and $f^-(c)$ are both domination-forced in $D'$, and $d_{D'}^+(f^-(c))>1$ then $A(D)=A(D')\cup \{cb, ab, ba\}$.
			\item[iv.] If $c$ is domination-forced in $D'$  and $f^-(c)$ is location-forced in $D'$,  then $A(D)=A(D')\cup \{cb, ab, ba\}$.
		\end{itemize}
		Then $D\in \mathcal{T}_n$.
	\end{lemma}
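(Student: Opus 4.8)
The plan is to show directly that the constructed digraph $D$ belongs to $\mathcal{T}_n$ by establishing its three defining features: $D$ has order $n$, its underlying graph is a connected tree, and $\gamma_{OL}(D)=n$. The order is immediate. For the underlying graph, observe that in every one of the rules (i)--(iv) all the newly added arcs live inside $\{a,b,c\}$ (they are among $ab,ba,bb,bc,cb$), so after passing to underlying graphs we merely attach the pendant path $c\,b\,a$ -- with $a$ a new leaf and $b$ a new degree-$2$ vertex -- to the tree underlying $D'$, keeping it connected and acyclic. To get the value $\gamma_{OL}(D)=n$ I would prove that \emph{every} vertex of $D$ is domination-forced or location-forced: by Corollary~\ref{corlacatable} this already makes $D$ locatable, and since a forced vertex lies in every OLD set, the only OLD set is then $V(D)$, giving $\gamma_{OL}(D)=n$.

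The whole verification rests on one bookkeeping observation: every new arc has its head in $\{a,b,c\}$. Hence $N^-_D(v)=N^-_{D'}(v)$ for every $v\in V(D')\setminus\{c\}$, while $N^-_D(c)=N^-_{D'}(c)\cup\{b\}$ when $bc\in A$ and $N^-_D(c)=N^-_{D'}(c)$ otherwise; moreover $N^-_D(a)=\{b\}$ always, and $N^-_D(b)$ equals $\{a\}$, $\{a,b\}$ or $\{a,c\}$ according to which of $ab,bb,cb$ were inserted. Thus a forcing reason valid in $D'$ transfers verbatim to $D$ \emph{unless} it uses $c$ as one of the two distinguished vertices, or uses $c$ as a domination-witness; only these cases need fresh arguments. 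As a first step, since $N^-_D(a)=\{b\}$, the vertex $b$ is domination-forced; and $a$ is forced in each case: if $N^-_D(b)=\{a\}$ it is domination-forced, if $N^-_D(b)=\{a,b\}$ then $N^-_D(b)\ominus N^-_D(a)=\{a\}$ forces it via the pair $(b,a)$, and if $N^-_D(b)=\{a,c\}$ then, picking $w$ with $N^-_{D'}(w)=\{c\}$ (which exists as $c$ is domination-forced in $D'$), we have $N^-_D(w)=\{c\}$ still and $N^-_D(b)\ominus N^-_D(w)=\{a\}$. In rules (iii) and (iv) no arc enters $c$, so $N^-_D$ is unchanged throughout $V(D')$ and every vertex of $D'$ keeps its status; together with the above this settles those two rules.

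The heart of the argument is rules (i) and (ii), where $bc\in A$, and here the hypothesis $d^+_{D'}(f^-(c))=1$ does the essential work. Since $f^-(c)\to c$ is a forcing arc and $c$ is the \emph{only} out-neighbour of $f^-(c)$, we get $N^-_{D'}(c)=\{f^-(c)\}$ and $c$ is the unique domination-witness of $f^-(c)$. Adding $b$ yields $N^-_D(c)=\{f^-(c),b\}$, so $f^-(c)$ loses its domination-forced status but is re-forced as location-forced, because $N^-_D(c)\setminus N^-_D(a)=\{f^-(c)\}$ (recall $N^-_D(a)=\{b\}$), i.e. via the pair $(c,a)$. The same out-degree hypothesis also ensures that no vertex other than $c$ has $f^-(c)$ in its in-neighbourhood, which is exactly what excludes the dangerous situation: a vertex of $D'$ whose only forcing reason passed through $c$ (a location pair distinguishing $c$, or a domination-witness equal to $c$) would force a second vertex to have $f^-(c)$ among its in-neighbours, which is impossible. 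Consequently all remaining vertices of $D'$ retain their forcing reasons, and $c$ itself stays forced -- domination-forced via its surviving witness $w\neq c$ in rule (i), location-forced via a $c$-avoiding pair in rule (ii) (a pair through $c$ being impossible by the same reasoning).

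The main obstacle, and the only place genuinely requiring care, is guaranteeing that enlarging $N^-(c)$ does not silently destroy the forcing of some vertex of $D'$; as explained, this is controlled entirely by $d^+_{D'}(f^-(c))=1$, which simultaneously pins down the transition of $f^-(c)$ and forbids the troublesome pairs. The one residual subtlety is the degenerate configuration $c=f^-(c)$, which can arise only in rule (i): with $c,f^-(c)$ both domination-forced and $d^+_{D'}(c)=1$, the vertex $c$ is then an isolated looped vertex, so $D'$ is a single vertex and $n=3$. In that case $c$'s unique domination-witness is $c$ itself, the generic argument for $c$ no longer applies, and these finitely many order-$3$ digraphs -- precisely those depicted in Figure~\ref{fig:c=f} -- must be checked by direct inspection. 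Assembling all cases, every vertex of $D$ is domination- or location-forced, whence $D\in\mathcal{T}_n$.
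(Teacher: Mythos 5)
Your overall strategy is the same as the paper's: reduce everything to showing that each vertex of $D$ is domination- or location-forced, observe that all new arcs have their heads in $\{a,b,c\}$ so that only forcing reasons passing through $c$ are at risk, and use the hypothesis $d^+_{D'}(f^-(c))=1$ in rules (i) and (ii) to argue that the only such reason is the domination-forcing of $f^-(c)$ via the witness $c$, which is then recovered as location-forcing through the pair $(c,a)$. This matches the paper's proof step for step (the paper simply writes out $N^-_D(a)$, $N^-_D(b)$, $N^-_D(c)$ explicitly in each sub-case), and your treatment of rules (iii) and (iv), and of the non-degenerate instances of (i) and (ii), is correct.

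The gap is the degenerate configuration $c=f^-(c)$, which you rightly isolate (it occurs only under rule (i), with $D'$ a single looped vertex and $n=3$) but then dismiss with ``must be checked by direct inspection'' followed by the assertion that every vertex ends up forced. That inspection does not come out as you claim. Take $D'=(\{c\},\{cc\})\in\mathcal{T}_1$, which satisfies all hypotheses of rule (i), and choose the option $A=\{cb,bc\}$, so $A(D)=\{cc,ab,ba,cb,bc\}$. Then $N^-_D(a)=\{b\}$, $N^-_D(b)=\{a,c\}$, $N^-_D(c)=\{b,c\}$. No vertex has in-neighbourhood $\{a\}$, and the three pairwise symmetric differences are $\{a,b,c\}$, $\{c\}$ and $\{a,b\}$, none of which is $\{a\}$; so $a$ is not forced, and indeed $\{b,c\}$ is an OLD set, giving $\gamma_{OL}(D)=2$ and $D\notin\mathcal{T}_3$. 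The failure occurs exactly where your generic argument for $a$ breaks: when $N^-_D(b)=\{a,c\}$ you force $a$ via a witness $w$ with $N^-_D(w)=\{c\}$, but in the degenerate case the only such witness in $D'$ is $w=c$ itself, and the added arc $bc$ destroys it. (The other three options of rule (i) do survive the degenerate case, each via an ad hoc pair, though not always for the reason your generic argument gives --- e.g.\ $c$ becomes location-forced rather than staying domination-forced.) So the deferred inspection cannot simply confirm the statement; the sub-case $A=\{cb,bc\}$ with $c=f^-(c)$ must be excluded. To be fair, the paper's own proof has the same blind spot --- its justification ``there is a vertex in $D$ only dominated by $c$'' silently assumes $f^-(c)\neq c$ --- so you have in fact put your finger on a genuine corner case of the lemma; but your write-up asserts rather than verifies that it is harmless, and it is not.
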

	
	\begin{proof} First we note that if
		$d_{D'}^+(f^-(c))=1$ (cases i and ii), then $N_{D'}^+(f^-(c))=\{c\}$. So in these cases there is no vertex $y$ such that $N^-(y)\setminus N^-(c)=\{f^-(y)\}$. Hence, if the new digraph $D$ is constructed by adding some new in-neighbours to $c$, this does not affect the forcing vertices of $D'$, other than $f^-(c)$. Thus, to prove that $D$ is extremal in cases i and ii, it suffices to show that by adding the set of new arcs, each vertex from the set $\{a,b,c,f^-(c)\}$ is a forced vertex in $D$.
		
		Moreover, in cases iii and iv, we do not add any in-neighbours to $c$, so in these cases as well it suffices to show that after adding the new arcs, each vertex from $\{a,b,c,f^-(c)\}$ is a forced vertex in $D$.
	
		\begin{itemize}
			\item [i.] 
			As the vertices $c$ and $f^-(c)$ are both domination-forced in $D'$, using Definitions \ref{def-f-f+} and \ref{def-ditree}, we conclude that $N_{D'}^-(f^-(c))=\{c\}$
			and $N_{D'}^-(c)=\{f^-(c)\}$. We claim that if $D'$ has more than one vertex, then $f^-(c)\neq c$. By contradiction, suppose that  $f^-(c)= c$, this means that there is a forcing loop at $c$. Since $d^-(c)=1$ and $d^+(f^-(c))=d^+(c)=1$ and using the fact that $D'$ is connected, we conclude that $V(D')=\{c\}$, which is a contradiction. Hence, the claim is true and $f^-(c)\neq c$. Now, we prove that in this case, $c$ remains domination-forced in $D$. To prove this, we note that  $N_{D'}^-(f^-(c))=\{c\}$ and $c\neq f^-(c)$. Therefore  $N_{D}^-(f^-(c))=\{c\}$, which shows that $c$ is domination-forced in $D$.
			
			Therefore, if $A(D)=A(D')\cup \{bb, bc, ab, ba\}$, then $N_{D}^-(a)=\{b\}$, $N_{D}^-(b)=\{a,b\}$ and $N_{D}^-(c)=\{f^-(c),b\}$. Hence, $b$ is domination-forced in $D$, $a$ and $f^-(c)$ are both location-forced in $D$.
			
			If $A(D)=A(D')\cup \{cb, bc, ab, ba\}$, then $N_{D}^-(a)=\{b\}$, $N_{D}^-(b)=\{a,c\}$ and $N_{D}^-(c)=\{f^-(c),b\}$. Hence, $b$ is domination-forced in $D$, $f^-(c)$ and $a$ are both location-forced in $D$ (the latter because there is a vertex in $D$ only dominated by $c$).
			
			If $A(D)=A(D')\cup \{ bc, ab, ba\}$, $N_{D}^-(a)=\{b\}$, then $N_{D}^-(b)=\{a\}$ and $N_{D}^-(c)=\{f^-(c),b\}$. Hence, $b$ and $a$ are both domination-forced in $D$ and $f^-(c)$ is location-forced in $D$.
			
			Finally, If $A(D)=A(D')\cup \{cb, ab, ba\}$, then $N_{D}^-(a)=\{b\}$, $N_{D}^-(b)=\{a,c\}$ and $N_{D}^-(c)=\{f^-(c)\}$. Hence, $b$ and $f^-(c)$ are both domination-forced in $D$ and $a$ is location-forced in $D$ (because there is a vertex in $D$ only dominated by $c$).
			
			In all cases, $c$ remains domination-forced.
			Therefore, we conclude that each vertex of $D$ is either domination-forced or location-forced which implies that $D\in \mathcal{T}_n$, as desired.
			
			\item [ii.] Since $f^-(c)$ is domination-forced in $D'$, $N_{D'}^-(c)=\{f^-(c)\}$.
			First suppose that $A(D)=A(D')\cup \{bb, bc, ab, ba\}$. Since $N_D^-(b)=\{a\}\cup N_D^-(a)$ and $N_D^-(a)=\{b\}$, we conclude that $a$ is location-forced and $b$ is domination-forced in $D$. Since $N_D^-(c)=\{f^-(c)\}\cup N_D^-(a)$, $f^-(c)$ is location-forced in $D$ and one can see that $c$ remains location-forced in $D$. Therefore, in this case $D\in \mathcal{T}_n$.
			
			Now, suppose that $A(D)=A(D')\cup \{bc, ab, ba\}$. Since $N_D^-(b)=\{a\}$ and $N_D^-(a)=\{b\}$, we have that $a$ and $b$ are both domination-forced in $D$. On the other hand,  $N_D^-(c)=\{f^-(c)\}\cup N_D^-(a)$, so $f^-(c)$ is location-forced in $D$ and again one can see that $c$ remains location-forced in $D$.

			Hence, $D\in \mathcal{T}_n$.
			\item [iii.] Since $c$ and $f^-(c)$ are both domination-forced in $D'$, using Definitions \ref{def-f-f+} and \ref{def-ditree}, we have $N_{D'}^-(f^-(c))=\{c\}$ and $N_{D'}^-(c)=\{f^-(c)\}$.

			Considering  $N_{D}^-(a)=\{b\}$, $N_{D}^-(b)=\{a,c\}$, $N_{D}^-(f^-(c))=\{c\}$ and $N_{D}^-(c)=\{f^-(c)\}$, it is easy to see that $b$, $c$ and $f^-(c)$ are all domination-forced in $D$ and $a$ is location-forced. Hence, $D\in \mathcal{T}_n$ as desired.
			\item [iv.]
			It is easy to see that $b$ and $c$ are domination-forced  and $f^-(c)$ remains location-forced in $D$. Since $c$ is domination-forced in $D'$, $N_{D}^-(f^-(c))=N_{D'}^-(f^-(c))=\{c\}$, hence $N_{D}^-(b)=\{a\}\cup N_{D}^-(f^-(c))$ and $a$ is location-forced in $D$. Therefore, $D\in \mathcal{T}_n$.\qedhere
		\end{itemize}
	\end{proof}
	
	\subsection{The characterization}

	As a conclusion of this section, we give our characterization theorem which shows how digraphs in $\mathcal{T}_n$ can be constructed recursively, using extremal digraphs of smaller order.
	
	\begin{definition}
		Let $\mathcal{C}^1(\mathcal{T}_n)$ be the set of all digraphs $D\in \mathcal{T}_{n+1}$ which are constructed from a digraph $D'\in \mathcal{T}_n$ using one of the rules given in Lemma~\ref{lemrecursive1-construct}, and $\mathcal{C}^2(\mathcal{T}_n)$ be the set of all digraphs $D\in \mathcal{T}_{n+2}$ which are constructed from a digraph $D'\in \mathcal{T}_{n}$ using one of the rules given in Lemma~\ref{lempendant-construct}.
	\end{definition}

	\begin{theorem}\label{thm-main}
		Let $n$ be a positive integer. If $n\leq 2$, then all extremal digraphs of $\mathcal{T}_{1}\cup \mathcal{T}_{2}$ are shown in Figure~\ref{fig:order1,2}. If $n>2$, then, we have $\mathcal{T}_n=\mathcal{C}^1(\mathcal{T}_{n-1})\cup \mathcal{C}^2(\mathcal{T}_{n-2})$.
	\end{theorem}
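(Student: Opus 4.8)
The plan is to prove the two inclusions $\mathcal{C}^1(\mathcal{T}_{n-1})\cup \mathcal{C}^2(\mathcal{T}_{n-2})\subseteq \mathcal{T}_n$ and $\mathcal{T}_n\subseteq \mathcal{C}^1(\mathcal{T}_{n-1})\cup \mathcal{C}^2(\mathcal{T}_{n-2})$ separately, since the theorem asserts equality of these two sets of digraphs. The base cases $n\leq 2$ are already settled by direct inspection (Figure~\ref{fig:order1,2}), so I would assume $n>2$ throughout.

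For the inclusion $\mathcal{C}^1(\mathcal{T}_{n-1})\cup \mathcal{C}^2(\mathcal{T}_{n-2})\subseteq \mathcal{T}_n$, there is essentially nothing new to do: by definition, any $D\in \mathcal{C}^1(\mathcal{T}_{n-1})$ arises from some $D'\in \mathcal{T}_{n-1}$ by one of the rules of Lemma~\ref{lemrecursive1-construct}, and that lemma already concludes $D\in \mathcal{T}_n$; likewise any $D\in \mathcal{C}^2(\mathcal{T}_{n-2})$ arises from some $D'\in \mathcal{T}_{n-2}$ via a rule of Lemma~\ref{lempendant-construct}, which again guarantees $D\in \mathcal{T}_n$. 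I would also note that the construction rules preserve the property of the underlying graph being a tree (a single pendant vertex, or a pendant path of length~2, is attached in each case), so membership in $\mathcal{T}_n$ is genuinely achieved, not just the extremality condition $\gamma_{OL}(D)=n$.

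The substantive direction is $\mathcal{T}_n\subseteq \mathcal{C}^1(\mathcal{T}_{n-1})\cup \mathcal{C}^2(\mathcal{T}_{n-2})$. Here I would take an arbitrary $D\in \mathcal{T}_n$ with underlying tree $T$ and invoke Lemma~\ref{lemtreecond}, which gives a dichotomy: either $T$ has a leaf $a$ carrying a forcing loop, or $T$ has a pendant path $cba$ of length~2 whose leaf $a$ lies in a forcing $2$-cycle. In the first case, I would set $D'=D\setminus\{a\}$; Lemma~\ref{lemrecursive1} shows $D'\in \mathcal{T}_{n-1}$ and records precisely which of the structural hypotheses (i) or (ii) of Lemma~\ref{lemrecursive1-construct} the pair $(D',b)$ satisfies, and that the arcs incident to $a$ are exactly those that rule would add back; hence $D\in \mathcal{C}^1(\mathcal{T}_{n-1})$. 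In the second case, I would set $D'=D\setminus\{a,b\}$; Lemma~\ref{lempendant} shows $D'\in \mathcal{T}_{n-2}$ and, through its case analysis (i)--(v), pins down exactly which configuration of forcing status of $c$ and $f^-(c)$ occurs and which arcs among $a,b,c$ are present, matching one of the rules of Lemma~\ref{lempendant-construct}; hence $D\in \mathcal{C}^2(\mathcal{T}_{n-2})$.

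The main obstacle will be the bookkeeping in the second case: I must verify that every configuration permitted by Lemma~\ref{lempendant} corresponds to exactly one admissible rule of Lemma~\ref{lempendant-construct}, and conversely that the reconstructed arc set agrees with the original $D$. Concretely, this means checking that the forcing status of $c$ and of $f^-(c)$ \emph{in $D'$} (as opposed to in $D$) lines up with the case hypotheses of Lemma~\ref{lempendant-construct}, using the translation between the two that Lemma~\ref{lempendant} provides (e.g. that $f^-(c)$, location-forced in $D$, becomes domination-forced in $D'$), and that the out-degree condition $d^+_{D'}(f^-(c))$ is correctly transferred. The special degenerate sub-cases where $c=f^-(c)$, handled by Figure~\ref{fig:c=f}, must be reconciled with the generic rules as well. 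Once this matching is laid out exhaustively, the two inclusions together yield the claimed equality, completing the proof.
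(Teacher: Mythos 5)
Your proposal follows the paper's proof essentially verbatim: both directions are obtained by assembling Lemmas~\ref{lemrecursive1-construct} and~\ref{lempendant-construct} for the inclusion $\mathcal{C}^1(\mathcal{T}_{n-1})\cup \mathcal{C}^2(\mathcal{T}_{n-2})\subseteq \mathcal{T}_n$, and the dichotomy of Lemma~\ref{lemtreecond} together with Lemmas~\ref{lemrecursive1} and~\ref{lempendant} for the converse. The bookkeeping you flag (matching the configurations of the deconstruction lemmas to the hypotheses of the construction rules) is exactly the content delegated to those lemmas, so your argument is correct and matches the paper's.
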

	\begin{proof}
		For $n\leq 2$, all connected locatable digraphs of these orders are those of Figure~\ref{fig:order1,2} and they are all extremal. Thus, assume next that $n>2$.
		
		By Lemma~\ref{lemrecursive1-construct}, we have $\mathcal{C}^1(\mathcal{T}_{n-1})\subseteq \mathcal{T}_n$ and by Lemma~\ref{lempendant-construct}, we have $\mathcal{C}^2(\mathcal{T}_{n-2})\subseteq \mathcal{T}_n$.
		
		Conversely, to see that $\mathcal{T}_n\subseteq\mathcal{C}^1(\mathcal{T}_{n-1})\cup \mathcal{C}^2(\mathcal{T}_{n-2})$, assume that we have a digraph $D$ in $\mathcal{T}_n$ whose underlying tree is $T$. 
		If $D$ contains a forcing loop at a leaf of $T$, then Lemma~\ref{lemrecursive1} shows that $D$ can be constructed from a digraph of $\mathcal{T}_{n-1}$ by one of the rules in Lemma~\ref{lemrecursive1-construct} and thus $D\in \mathcal{C}^1(\mathcal{T}_{n-1})$. Otherwise, using  Lemma~\ref{lemtreecond}, $T$ contains a pendant path of length~2 whose leaf is contained in a forcing 2-cycle in $D$. Hence, by Lemma~\ref{lempendant}, $D$ can be constructed from a digraph of $\mathcal{T}_{n-2}$ by one of the rules in Lemma~\ref{lempendant-construct} and thus $D\in \mathcal{C}^2(\mathcal{T}_{n-2})$.
	\end{proof}
	
			We depict in Figure~\ref{fig:smalltrees} all extremal digraphs of order at most~4, that were constructed using Theorem~\ref{thm-main}.

			\begin{figure}[!htpb]
				\centering
				\begin{tikzpicture}
					\node[small node](x00) at (0,5)    {};
					
					\path[thick,densely dashed,-{Latex[scale=1.1]}] (x00) edge[out=50,in=100,loop, min distance=10mm] node {} (x00);
					
					\node[small node](x01) at (1.5,5)    {};
					\node[small node](y01) at (1.5,6.5)    {};
					
					\path[thick,densely dashed,-{Latex[scale=1.1]}] (x01) edge[bend left] (y01)	(y01) edge[bend left] (x01);

					\node[small node](x02) at (3,5)    {};
					\node[small node](y02) at (3,6.5)    {};
					
					\path[thick,densely dashed,-{Latex[scale=1.1]}] (x02) edge[bend left] (y02)	(y02) edge[bend left] (x02);
					\path[thick,-{Latex[scale=1.1]}] (x02) edge[out=30,in=330,loop, min distance=8mm] node {} (x02);

					\node[small node](x03) at (4.5,5)    {};
					\node[small node](y03) at (4.5,6.5)    {};
					
					\path[thick,-{Latex[scale=1.1]}] (x03) edge node {} (y03);
					\path[thick,densely dashed,-{Latex[scale=1.1]}] (x03) edge[out=330,in=30,loop, min distance=8mm] node {} (x03);
					\path[thick,densely dashed,-{Latex[scale=1.1]}] (y03) edge[out=330,in=30,loop, min distance=8mm] node {} (y03);

					\node[small node](x1) at (6,5)    {};
					\node[small node](y1) at (6,6.5)    {};
					\node[small node](z1) at (6,8)    {};
					
					\path[thick,densely dashed,-{Latex[scale=1.1]}] (x1) edge[bend left] (y1)	(y1) edge[bend left] (x1);
					\path[thick,-{Latex[scale=1.1]}] (y1) edge node {} (z1);
					\path[thick,densely dashed,-{Latex[scale=1.1]}] (z1) edge[out=330,in=30,loop, min distance=8mm] node {} (z1);

					\node[small node](x2) at (7.5,5)    {};
					\node[small node](y2) at (7.5,6.5)    {};
					\node[small node](z2) at (7.5,8)    {};
					
					\path[thick,densely dashed,-{Latex[scale=1.1]}] (x2) edge[bend left] (y2)	(y2) edge[bend left] (x2);
					\path[thick,-{Latex[scale=1.1]}] (y2) edge node {} (z2);
					\path[thick,-{Latex[scale=1.1]}] (y2) edge[out=330,in=30,loop, min distance=8mm] node {} (y2);
					\path[thick,densely dashed,-{Latex[scale=1.1]}] (z2) edge[out=330,in=30,loop, min distance=8mm] node {} (z2);

					\node[small node](x3) at (9,5)    {};
					\node[small node](y3) at (9,6.5)    {};
					\node[small node](z3) at (9,8)    {};
					
					\path[thick,densely dashed,-{Latex[scale=1.1]}] (x3) edge[bend left] (y3)	(y3) edge[bend left] (x3);
					\path[thick,-{Latex[scale=1.1]}] (z3) edge node {} (y3);
					\path[thick,densely dashed,-{Latex[scale=1.1]}] (z3) edge[out=330,in=30,loop, min distance=8mm] node {} (z3);

					\node[small node](x4) at (10.5,5)    {};
					\node[small node](y4) at (10.5,6.5)    {};
					\node[small node](z4) at (10.5,8)    {};
					
					\path[thick,densely dashed,-{Latex[scale=1.1]}] (x4) edge[out=330,in=30,loop, min distance=8mm] node {} (x4);
					\path[thick,-{Latex[scale=1.1]}] (y4) edge node {} (x4);
					\path[thick,densely dashed,-{Latex[scale=1.1]}] (y4) edge[out=330,in=30,loop, min distance=8mm] node {} (y4);
					\path[thick,-{Latex[scale=1.1]}] (y4) edge node {} (z4);
					\path[thick,densely dashed,-{Latex[scale=1.1]}] (z4) edge[out=330,in=30,loop, min distance=8mm] node {} (z4);

					\node[small node](a01) at (0,4)    {};
					\node[small node](b01) at (0,2.5)    {};
					\node[small node](c01) at (0,1)    {};
					\node[small node](d01) at (0,-0.5)    {};
					
					\path[thick,densely dashed,-{Latex[scale=1.1]}] (a01) edge[bend left] (b01)	(b01) edge[bend left] (a01);
					\path[thick,-{Latex[scale=1.1]}] (b01) edge node {} (c01);
					\path[thick,densely dashed,-{Latex[scale=1.1]}] (c01) edge[bend left] (d01)	(d01) edge[bend left] (c01);

					\node[small node](a02) at (1.6,4)    {};
					\node[small node](b02) at (1.6,2.5)    {};
					\node[small node](c02) at (1.6,1)    {};
					\node[small node](d02) at (1.6,-0.5)    {};
					
					\path[thick,densely dashed,-{Latex[scale=1.1]}] (a02) edge[bend left] (b02)	(b02) edge[bend left] (a02);
					\path[-{Latex[scale=1.1]}] (b02) edge[bend left]  (c02)  (c02) edge[bend left]  (b02);
					\path[thick,densely dashed,-{Latex[scale=1.1]}] (c02) edge[bend left] (d02)	(d02) edge[bend left] (c02);

					\node[small node](a03) at (3.2,4)    {};
					\node[small node](b03) at (3.2,2.5)    {};
					\node[small node](c03) at (3.2,1)    {};
					\node[small node](d03) at (3.2,-0.5)    {};
					
					\path[thick,densely dashed,-{Latex[scale=1.1]}] (a03) edge[bend left] (b03)	(b03) edge[bend left] (a03);
					\path[thick,-{Latex[scale=1.1]}] (b03) edge[out=30,in=330,loop, min distance=8mm] node {} (b03);
					\path[thick,-{Latex[scale=1.1]}] (b03) edge node {} (c03);
					\path[thick,densely dashed,-{Latex[scale=1.1]}] (c03) edge[bend left] (d03)	(d03) edge[bend left] (c03);

					\node[small node](a04) at (5,4)    {};
					\node[small node](b04) at (5,2.5)    {};
					\node[small node](c04) at (5,1)    {};
					\node[small node](d04) at (5,-0.5)    {};
					
					\path[thick,densely dashed,-{Latex[scale=1.1]}] (a04) edge[bend left] (b04)	(b04) edge[bend left] (a04);
					\path[thick,-{Latex[scale=1.1]}] (c04) edge node {} (b04);
					\path[thick,densely dashed,-{Latex[scale=1.1]}] (c04) edge[out=330,in=30,loop, min distance=8mm] node {} (c04);
					\path[thick,-{Latex[scale=1.1]}] (c04) edge node {} (d04);
					\path[thick,densely dashed,-{Latex[scale=1.1]}] (d04) edge[out=330,in=30,loop, min distance=8mm] node {} (d04);

					\node[small node](a4) at (6.8,4)    {};
					\node[small node](b4) at (6.8,2.5)    {};
					\node[small node](c4) at (6.8,1)    {};
					\node[small node](d4) at (6.8,-0.5)    {};
					
					\path[thick,-{Latex[scale=1.1]}] (b4) edge node {} (a4);
					\path[thick,densely dashed,-{Latex[scale=1.1]}] (b4) edge[bend left] (c4)	(c4) edge[bend left] (b4);
					\path[thick,densely dashed,-{Latex[scale=1.1]}] (a4) edge[out=330,in=30,loop, min distance=8mm] node {} (a4);
					\path[thick,densely dashed,-{Latex[scale=1.1]}] (d4) edge[out=330,in=30,loop, min distance=8mm] node {} (d4);
					\path[thick,-{Latex[scale=1.1]}] (c4) edge node {} (d4);

					\node[small node](a5) at (8.7,4)    {};
					\node[small node](b5) at (8.7,2.5)    {};
					\node[small node](c5) at (8.7,1)    {};
					\node[small node](d5) at (8.7,-0.5)    {};
					
					\path[thick,densely dashed,-{Latex[scale=1.1]}] (b5) edge[bend left] (a5)	(a5) edge[bend left] (b5);
					\path[thick,-{Latex[scale=1.1]}] (c5) edge node {} (b5);
					\path[thick,densely dashed,-{Latex[scale=1.1]}] (c5) edge[out=330,in=30,loop, min distance=8mm] node {} (c5);
					\path[thick,densely dashed,-{Latex[scale=1.1]}] (d5) edge[out=330,in=30,loop, min distance=8mm] node {} (d5);
					\path[thick,-{Latex[scale=1.1]}] (c5) edge node {} (d5);

					\node[small node](a7) at (10.5,4)    {};
					\node[small node](b7) at (10.5,2.5)    {};
					\node[small node](c7) at (10.5,1)    {};
					\node[small node](d7) at (10.5,-0.5)    {};	
					
					\path[thick,-{Latex[scale=1.1]}] (a7) edge node {} (b7);
					\path[thick,densely dashed,-{Latex[scale=1.1]}] (a7) edge[out=330,in=30,loop, min distance=8mm] node {} (a7);
					\path[thick,densely dashed,-{Latex[scale=1.1]}] (b7) edge[bend left] (c7)	(c7) edge[bend left] (b7);
					\path[thick,densely dashed,-{Latex[scale=1.1]}] (d7) edge[out=330,in=30,loop, min distance=8mm] node {} (d7);
					\path[thick,-{Latex[scale=1.1]}] (d7) edge node {} (c7);

					\node[small node](a1) at (0,-1.5)    {};
					\node[small node](b1) at (0,-3)    {};
					\node[small node](c1) at (0,-4.5)    {};
					\node[small node](d1) at (1.5,-3)    {};
					
					\path[thick,densely dashed,-{Latex[scale=1.1]}] (b1) edge[bend left] (a1)	(a1) edge[bend left] (b1);
					\path[thick,densely dashed,-{Latex[scale=1.1]}] (c1) edge[out=330,in=30,loop, min distance=8mm] node {} (c1);
					\path[thick,-{Latex[scale=1.1]}] (b1) edge node {} (c1);
					\path[thick,densely dashed,-{Latex[scale=1.1]}] (d1) edge[out=330,in=30,loop, min distance=8mm] node {} (d1);
					\path[thick,-{Latex[scale=1.1]}] (b1) edge node {} (d1);

					\node[small node](a2) at (3.5,-1.5)    {};
					\node[small node](b2) at (3.5,-3)    {};
					\node[small node](c2) at (3.5,-4.5)    {};
					\node[small node](d2) at (5,-3)    {};
					
					\path[thick,densely dashed,-{Latex[scale=1.1]}] (b2) edge[bend left] (a2)	(a2) edge[bend left] (b2);
					\path[thick,densely dashed,-{Latex[scale=1.1]}] (c2) edge[out=330,in=30,loop, min distance=8mm] node {} (c2);
					\path[thick,-{Latex[scale=1.1]}] (b2) edge[out=-20,in=-70,loop, min distance=8mm] node {} (b2);
					\path[thick,-{Latex[scale=1.1]}] (b2) edge node {} (c2);
					\path[thick,densely dashed,-{Latex[scale=1.1]}] (d2) edge[out=330,in=30,loop, min distance=8mm] node {} (d2);
					\path[thick,-{Latex[scale=1.1]}] (b2) edge node {} (d2);

					\node[small node](a3) at (6.5,-1.5)    {};
					\node[small node](b3) at (6.5,-3)    {};
					\node[small node](c3) at (6.5,-4.5)    {};
					\node[small node](d3) at (8,-3)    {};
					
					\path[thick,densely dashed,-{Latex[scale=1.1]}] (b3) edge[bend left] (a3)	(a3) edge[bend left] (b3);
					\path[thick,densely dashed,-{Latex[scale=1.1]}] (c3) edge[out=330,in=30,loop, min distance=8mm] node {} (c3);
					\path[thick,densely dashed,-{Latex[scale=1.1]}] (d3) edge[out=330,in=30,loop, min distance=8mm] node {} (d3);
					\path[thick,-{Latex[scale=1.1]}] (c3) edge node {} (b3);
					\path[thick,-{Latex[scale=1.1]}] (b3) edge node {} (d3);

					\node[small node](a6) at (9.5,-1.5)    {};
					\node[small node](b6) at (9.5,-3)    {};
					\node[small node](c6) at (9.5,-4.5)    {};
					\node[small node](d6) at (11,-3)    {};
					
					\path[thick,-{Latex[scale=1.1]}] (b6) edge node {} (a6);
					\path[thick,-{Latex[scale=1.1]}] (b6) edge node {} (c6);
					\path[thick,-{Latex[scale=1.1]}] (b6) edge node {} (d6);
					\path[thick,densely dashed,-{Latex[scale=1.1]}] (a6) edge[out=330,in=30,loop, min distance=8mm] node {} (a6);
					\path[thick,densely dashed,-{Latex[scale=1.1]}] (b6) edge[out=-20,in=-70,loop, min distance=8mm] node {} (b6);
					\path[thick,densely dashed,-{Latex[scale=1.1]}] (c6) edge[out=330,in=30,loop, min distance=8mm] node {} (c6);
					\path[thick,densely dashed,-{Latex[scale=1.1]}] (d6) edge[out=330,in=30,loop, min distance=8mm] node {} (d6);

				\end{tikzpicture}\centering
				\caption{All extremal di-trees with order at most~4. Forcing arcs are dashed.}\label{fig:smalltrees}
			\end{figure}
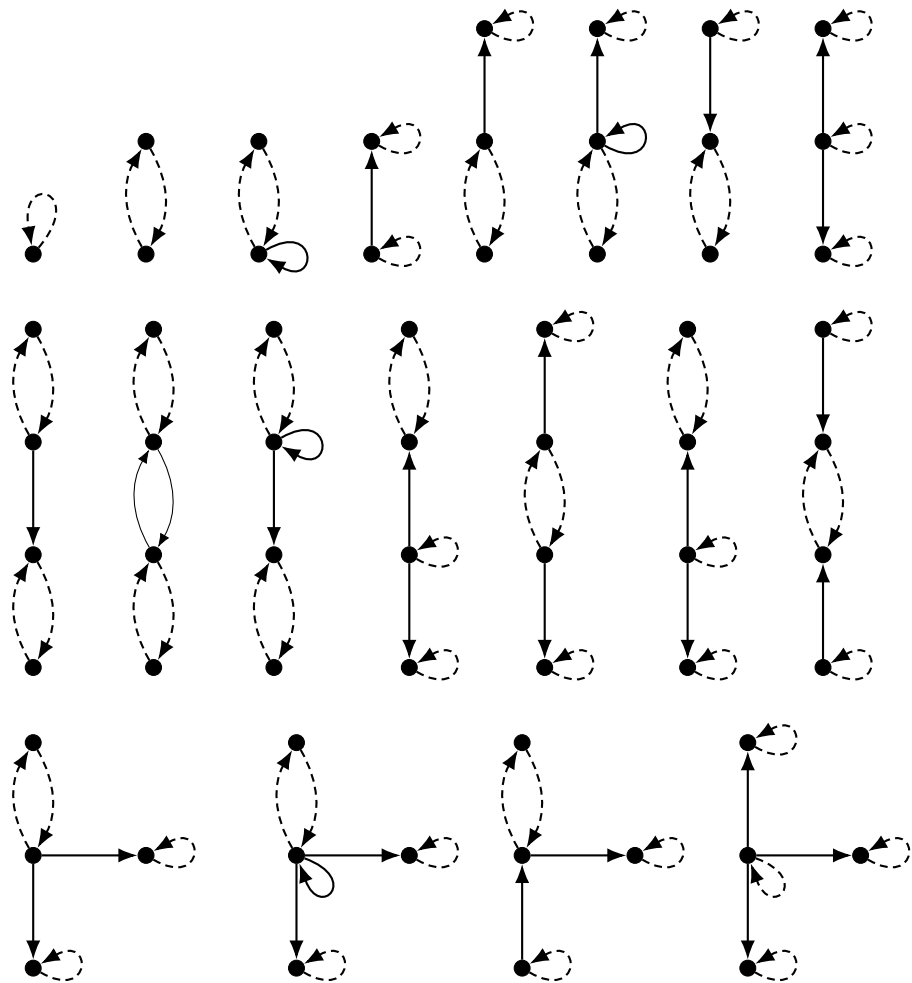

		\section{Conclusion}\label{conclu}
		
		By studying structural properties of extremal digraphs with respect to OLD sets, we have been able to give new proofs of several existing results about both digraphs and undirected graphs, for both identifying codes and OLD sets. Indeed, OLD sets of general digraphs generalize all these problems. Thus, we believe that our results shed new light on this type of extremal problems.
		
		We have also given a characterization of all such extremal digraphs, which, it appears, form a very rich class of digraphs. Even our recursive characterization for extremal di-trees, 
		although of course more restricted than the general case, shows that there are many such extremal trees.

	\end{document}